\newcommand{\x}{\textbf{x}}
\renewcommand{\a}{\textbf{a}}
\renewcommand{\b}{\textbf{b}}
\DeclareMathOperator{\wt}{wt}
\title{Symmetric Group Action of the Birational R-matrix}
\author{Sunita Chepuri, Feiyang Lin}
\date{August 7, 2020}
\begin{document}

\begin{abstract}
The birational $R$-matrix is a transformation that appears in the theory of geometric crystals, the study of total positivity in loop groups, and discrete dynamical systems. This $R$-matrix gives rise to an action of the symmetric group $S_m$ on an $m$-tuple of vectors.  While the birational $R$-matrix is precisely the formula corresponding to the action of the simple transposition $s_i$, explicit formulas for the action of other permutations are generally not known.  One particular case was studied by Lam and Pylyavskyy as it relates to energy functions of crystals. In this paper, we will discuss formulas for several additional cases, including transpositions, and provide combinatorial interpretations for the functions that appear in our work.
\end{abstract}

\maketitle

\section{Introduction}

The study of total positivity began in the 1930's with the discoveries of Schoenberg~\cite{S1930}, regarding variation-diminishing properties of the totally nonnegative part of $GL_n(\R)$, and Gantmacher--Krein~\cite{GK1937}, regarding spectral properties of the totally positive part of $GL_n(\R)$.  Since then, totally positive and totally nonnegative matrices have been found to have applications in many areas of math and physics.

One of the most important classical results in total positivity is the Loewner--Whitney Theorem~\cite{L1955,W1952}.  This theorem gives a set of generators, with easily computable relations, for the totally nonnegative part of $\GL_n(\R)$.  Lusztig~\cite{L1994} revolutionized the field by using the Loewner--Whitney Theorem to generalize the concept of total nonnegativity in $GL_n(\R)$ to other Lie groups.

In~\cite{LP2012}, Lam and Pylyavskyy explored total positivity in setting of loop groups.  One of their main results was an analogue of the Loewner--Whitney Theorem for the upper unitriangular part of the formal loop group. The relations between generators in this setting led to their definition of the birational $R$-matrix, a transformation on an ordered pair of vectors in $\R_{>0}$.

The generators found by Lam and Pylyavskyy in~\cite{LP2012} correspond to cylindric networks via a boundary measurement map. The birational $R$-matrix, which describes relations between the generators, can also be interpreted in terms of cylindric networks: it describes a semi-local move on cylindric networks that preserves boundary measurements. The connection between the birational $R$-matrix and cylindric networks is explored more fully in Part 2 of~\cite{LP2013}. This work has since been extended by the first author to the context of plabic networks~\cite{C2020}.

The birational R-matrix is also related to several other areas of mathematics.  This transformation plays an important role in the study of geometric crystals~\cite{BK2010, E2003}.  It tropicalizes to the combinatorial R-matrix~\cite{KKMMNN1992} and can be obtained using cluster algebras via the cluster $R$-matrix of Inoue--Lam--Pylyavskyy~\cite{ILP2016}. In addition, it has applications to discrete Painlev{\'e} dynamical systems~\cite{KNY2002} and box-ball systems~\cite{LPS2014}.

We now proceed to define the birational R-matrix. Given $\a = (a_1, \dots, a_n),\b = (b_1, \dots, b_n)\in\R^n_{>0}$, let \[\kappa_i(\a, \b) = 
\sum_{j = i}^{i+n-1}
\prod_{k = i+1}^{j}b_k 
\prod_{k = j+1}^{i+n-1}a_k,\] where the indices $k$ are taken modulo $n$.
Then we can define a map
\[\eta: (\a, \b) \mapsto (\b', \a')\]
where $\a' = (a_1', \dots, a_n'), \b' = (b_1', \dots, b_n')$, and
\[a_i' = \frac{a_{i-1}\kappa_{i-1}(\a, \b)}{\kappa_{i}(\a, \b)}, \ \ b_i' = \frac{b_{i+1}\kappa_{i+1}(\a, \b)}{\kappa_{i}(\a, \b)}.\]

\begin{example}
For $n = 4$, 
\[a_2' = a_1 \frac{\kappa_1(\a, \b)}{\kappa_2(\a, \b)}= a_1 \frac{a_2a_3a_4+b_2a_3a_4+b_2b_3a_4+b_2b_3b_4}{a_3a_4a_1+b_3a_4a_1+b_3b_4a_1+b_3b_4b_1}.\]
\end{example}

Let $\x_1,\dots ,\x_m\in\R^n_{>0}$ where $\x_i=(x_i^{(1)},\dots,x_i^{(n)})$ and upper indices are considered modulo $n$.  For $1\leq i<m$, we define \[\eta_i(\x_1, \dots, \x_m) = (\x_1, \dots, \x_{i-1}, \eta(\x_i, \x_{i+1}), \x_{i+2}, \dots, \x_m).\]

\begin{theorem}[\cite{LP2012} Lemma 6.1, Theorem 6.3]
The birational $R$-matrix has the following properties:
\begin{itemize}
    \item $\eta$ is an involution: for $1 \leq i < m$, $\eta_i^2 = 1$;
    \item $\eta$ satisfies the braid relation: for $1 \leq i < m-1$,
    \[\eta_i\eta_{i+1}\eta_i(\x_1, \dots, \x_m) = \eta_{i+1}\eta_i\eta_{i+1}(\x_1, \dots, \x_m).\]
\end{itemize}
\end{theorem}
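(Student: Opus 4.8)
The plan is to treat the two properties separately. The involution property I would handle first, and cleanly, by extracting a single recursion for the $\kappa_i$. Specifically, I would prove the cyclic identity
\[a_i\,\kappa_i(\a,\b)-b_{i+1}\,\kappa_{i+1}(\a,\b)=\textstyle\prod_k a_k-\prod_k b_k=:\Delta\]
by splitting the defining sum for $\kappa_i(\a,\b)$ into the term $j=i$ (which equals $\prod_k a_k/a_i$) and the terms $j>i$ (which, after factoring out $b_{i+1}$, reassemble into a truncation of $\kappa_{i+1}(\a,\b)$); tracking the single wrapped‑around index then produces exactly the correction $\prod_k a_k-\prod_k b_k$. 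Granting this, $\eta$ collapses to the much simpler form $a_i'=b_i+\Delta/\kappa_i(\a,\b)$ and $b_i'=a_i-\Delta/\kappa_i(\a,\b)$, so in particular $\prod_k a_k'=\prod_k a_k$, $\prod_k b_k'=\prod_k b_k$, and the ``discrepancy'' of the pair $(\b',\a')$ is $\prod_k b_k'-\prod_k a_k'=-\Delta$.

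The second ingredient is that $(\kappa_1,\dots,\kappa_n)$ is itself an invariant: $\kappa_i(\b',\a')=\kappa_i(\a,\b)$ for all $i$. I would prove this by applying the recursion above to the pair $(\b',\a')$, which says that $\mu_i:=\kappa_i(\b',\a')$ solves the cyclic linear system $b_i'\mu_i-a_{i+1}'\mu_{i+1}=-\Delta$; substituting the simplified formulas for $a',b'$ verifies that $\mu_i=\kappa_i(\a,\b)$ is a solution, and the homogeneous system forces $\mu\equiv 0$ whenever $\prod_k b_k/\prod_k a_k\ne 1$ (the cyclic product of the ratios $b_i'/a_{i+1}'$ equals $\prod_k b_k/\prod_k a_k$), so the solution is unique; the degenerate locus $\prod_k a_k=\prod_k b_k$ is then covered by continuity, since $\eta_i^2=1$ is a rational identity. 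With $\kappa_i(\b',\a')=\kappa_i(\a,\b)$ in hand, one evaluates $\eta(\b',\a')$ directly from the simplified formula — the two $\Delta$‑terms cancel against $a_i'=b_i+\Delta/\kappa_i$ and $b_i'=a_i-\Delta/\kappa_i$ — and recovers $(\a,\b)$; since $\eta_i$ acts as $\eta$ on the $(i,i+1)$‑block and trivially elsewhere, this gives $\eta_i^2=1$.

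The braid relation is where the real difficulty lies. A brute‑force approach — expanding $\eta_i\eta_{i+1}\eta_i$ and $\eta_{i+1}\eta_i\eta_{i+1}$ on $(\x_1,\dots,\x_m)$ via the simplified formulas and checking the resulting $3n$ rational identities — is feasible in principle but unilluminating, so I would instead pursue the loop‑group realization underlying \cite{LP2012}: each vector $\x$ is encoded by a ``whirl'' $M(\x)$ in the totally nonnegative part of the formal loop group, with $\eta$ realizing the exchange relation $M(\a)M(\b)=M(\b')M(\a')$ (equivalently, the semi‑local move on cylindric networks that preserves boundary measurements). Both $\eta_i\eta_{i+1}\eta_i$ and $\eta_{i+1}\eta_i\eta_{i+1}$ then rewrite $M(\x_i)M(\x_{i+1})M(\x_{i+2})$ as a product of three whirls whose parameter triples are related to $(\x_i,\x_{i+1},\x_{i+2})$ through the longest element of $S_3$, and the braid relation becomes the assertion that these two factorizations coincide. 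I expect this confluence to be the main obstacle: factorization into whirls is genuinely non‑unique (already $M(\a)M(\b)$ admits the two factorizations $(\a,\b)$ and $(\b',\a')$), so the argument cannot be purely formal and must draw on the structure theory of whirls and curls in \cite{LP2012} — or, alternatively, on the unicity theorem for geometric crystals of \cite{BK2010}, from which the braid relation for the $R$‑matrix follows formally.
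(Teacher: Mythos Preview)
The paper does not actually prove this theorem: it is stated with a citation to \cite{LP2012} (Lemma 6.1 and Theorem 6.3 there) and no argument is given in the present paper. So there is no ``paper's own proof'' to compare your proposal against; the authors simply import the result.

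That said, your treatment of the involution is correct and self-contained. The recursion $a_i\kappa_i-b_{i+1}\kappa_{i+1}=\prod_k a_k-\prod_k b_k$ holds for the reason you give, and it does yield the simplified formulas $a_i'=b_i+\Delta/\kappa_i$, $b_i'=a_i-\Delta/\kappa_i$. Your argument that $\kappa_i(\b',\a')=\kappa_i(\a,\b)$ via the cyclic linear system is valid on the Zariski-open locus $\prod_k a_k\neq\prod_k b_k$, and the extension by continuity to a rational identity is legitimate; the final cancellation then gives $\eta^2=\mathrm{id}$.

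For the braid relation your proposal is not a proof but a plan, and you are candid about that. Both routes you suggest---the loop-group/whirl realization $M(\a)M(\b)=M(\b')M(\a')$ and the appeal to the uniqueness theorem for geometric $R$-matrices---are exactly the arguments given in the cited sources (\cite{LP2012} and \cite{BK2010} respectively), so in effect you are re-citing the same literature the paper already cites. The one genuine gap is the ``confluence'' step you flag: from $M(\a)M(\b)=M(\b')M(\a')$ alone one cannot conclude the two rewritings of $M(\x_i)M(\x_{i+1})M(\x_{i+2})$ coincide, precisely because whirl factorizations are not unique. The missing ingredient is a rigidity statement---e.g.\ that the ordered multiset of ``weights'' $\prod_r x^{(r)}$ of the factors, together with the product, determines the factorization---which is what \cite{LP2012} supplies. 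Without that input, your loop-group sketch does not close.
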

This implies that the birational $R$-matrix defines an action of the symmetric group $S_m$ on $\x_1,\dots,\x_m$.  Let $s_i$ denote the transposition that switches $i$ and $i+1$. Then we obtain a symmetric group action by defining 
\[s_i(\x_1, \dots, \x_m) = \eta_i(\x_1, \dots, \x_m).\] To refer to specific variables after applying a permutation, we write $s(\x_1, \dots, \x_m) = (s(\x_1), \dots, s(\x_m))$ where $s(\x_i) = (s(x_i^{(1)}), \dots, s(x_i^{(n)}))$.

\vspace{0.1in} \textbf{Main Problem.} For any $s \in S_m$, $1 \leq i \leq m$ and $1 \leq r \leq n$, how can we write $s(x_i^{(r)})$ explicitly as a rational function in the original variables?
\vspace{0.1in}

In~\cite{BFZ1996}, Berenstein, Fomin, and Zelevinsky asked a similar question: given a minimal factorization of a totally positive matrix, how can we explicitly write the parameters of another minimal factorization?  This line of study led to the development of cluster algebras. Our main problem is the loop group analogue of this question.

This paper will proceed as follows.  In Section~\ref{sec:oneshifts} of this paper, we build on results in~\cite{LP2010} to produce explicit formulas for the action of $s_{j-1}s_{j-2}\dots s_i$ and $s_{i}s_{i+1}\dots s_{j-1}$ when $1\leq i< j<m$. These formulas are given in terms of functions we denote as $\tau$, $\sigma$, and $\bar{\sigma}$.  In Section \ref{sec:trans}, we introduce $\Omega$ functions and we state formulas for the action of permutations $s_ks_{k+1}\dots s_{j-2}s_{j-1}s_{j-2}\dots s_i$ where $1\leq i \leq k <j<m$ and $s_{k-1}s_{k-2}\dots s_{i+1}s_is_{i+1}\dots s_{j-1}$ where $1\leq i < k \leq j<m$.  Note that when $k=i$ in the first case this permutation is the transposition of $i$ and $j$, and similarly when $k=j$ in the second case.  Section~\ref{sec:transProof} contains the proof of a technical lemma needed in Section~\ref{sec:trans}.  Lastly, in Section~\ref{sec:comb-interp}, we provide combinatorial interpretations of the $\tau$, $\sigma$, $\bar{\sigma}$ and $\Omega$ functions.

\section{Formulas}

Our formulas rely heavily on functions we denote as $\tau, \sigma$ and $\bar{\sigma}$.  The $\tau$ and $\sigma$ functions were defined by Lam and Pylyavskyy in~\cite{LP2010}.  The $\bar{\sigma}$ function is dual to the $\sigma$ function.

Let $n$ be a positive integer, $k$ a nonnegative integer, and let $1 \leq r \leq n$. Then $\tau_k^{(r)}$ is defined as follows:
\[\tau_k^{(r)}(\x_1,\x_2,\dots,\x_m)=\sum_{1\leq i_i\leq i_2\leq\dots\leq i_k\leq m}x_{i_1}^{(r)}x_{i_2}^{(r-1)}\dots x_{i_k}^{(r-k+1)}\]
where no index appears more than $n-1$ times in the sum.  By convention, $\tau_0^{(r)}(\x_1,\x_2,\dots,\x_m)=1$ and $\tau_k^{(r)}(\x_1,\x_2,\dots,\x_m)=0$ if $k$ is negative or if $k>m(n-1)$.

\begin{example}
Let $n = 4$. Then $\tau_{5}^{(3)}(\x_1,\x_2)=x_1^{(3)}x_1^{(2)}x_1^{(1)}x_2^{(4)}x_2^{(3)}+x_1^{(3)}x_1^{(2)}x_2^{(1)}x_2^{(4)}x_2^{(3)}$.
\end{example}

The $\sigma$ and $\bar{\sigma}$ functions are defined using $\tau$. We can think of them as the $\tau$ functions with the caveat that $\x_1$ or $\x_m$ variables are now allowed to appear more than $n-1$ times.
\[\sigma_k^{(r)}(\x_1,\x_2,\dots,\x_m)=\sum_{i=0}^kx_1^{(r)}x_1^{(r-1)}\dots x_1^{(r-i+1)}\tau_{k-i}^{(r-i)}(\x_2,\x_3,\dots,\x_m),\]
\[\bar{\sigma}_k^{(r)}(\x_1,\x_2,\dots,\x_m)=\sum_{i=0}^k\tau_{k-i}^{(r)}(\x_1,\x_2,\dots,\x_{m-1})x_m^{(r-k+i)}x_m^{(r-k+i-1)}\dots x_m^{(r-k+1)}.\]

\begin{example}
Let $n=4$.  Then
\begin{align*}\sigma_{5}^{(3)}(\textbf{x}_1,\textbf{x}_2)=&\ \tau_5^{(3)}(\x_2)+x_1^{(3)}\tau_4^{(2)}(\x_2)+x_1^{(3)}x_1^{(2)}\tau_3^{(1)}(\x_2)+x_1^{(3)}x_1^{(2)}x_1^{(1)}\tau_2^{(4)}(\x_2)+\\
&\ x_1^{(3)}x_1^{(2)}x_1^{(1)}x_1^{(4)}\tau_1^{(3)}(\x_2)+x_1^{(3)}x_1^{(2)}x_1^{(1)}x_1^{(4)}x_1^{(3)}\tau_0^{(2)}(\x_2)\\
=&\ x_1^{(3)}x_1^{(2)}\tau_3^{(1)}(\x_2)+x_1^{(3)}x_1^{(2)}x_1^{(1)}\tau_2^{(4)}(\x_2)+x_1^{(3)}x_1^{(2)}x_1^{(1)}x_1^{(4)}\tau_1^{(3)}(\x_2)+x_1^{(3)}x_1^{(2)}x_1^{(1)}x_1^{(4)}x_1^{(3)}\\
=&\ x_1^{(3)}x_1^{(2)}x_1^{(1)}x_1^{(4)}x_1^{(3)}+x_1^{(3)}x_1^{(2)}x_1^{(1)}x_1^{(4)}x_2^{(3)}+x_1^{(3)}x_1^{(2)}x_1^{(1)}x_2^{(4)}x_2^{(3)}+x_1^{(3)}x_1^{(2)}x_2^{(1)}x_2^{(4)}x_2^{(3)}.
\end{align*}

Similarly, $$\bar{\sigma}_{5}^{(3)}(\textbf{x}_1,\textbf{x}_2)=x_1^{(3)}x_1^{(2)}x_1^{(1)}x_2^{(4)}x_2^{(3)}+x_1^{(3)}x_1^{(2)}x_2^{(1)}x_2^{(4)}x_2^{(3)}+x_1^{(3)}x_2^{(2)}x_2^{(1)}x_2^{(4)}x_2^{(3)}+x_2^{(3)}x_2^{(2)}x_2^{(1)}x_2^{(4)}x_2^{(3)}.$$
\end{example}

We state a fundamental identity of the $\sigma$ and $\bar{\sigma}$ functions.
\begin{lemma} \label{lem:sigmaidentity}
\[\sigma_{(n-1)(j-i)}^{(r)}(\x_i, \dots, \x_j) = 
\sum_{k = 0}^{n-1}
\left ( \prod_{t = 0}^{k-1}x_i^{(r-t)} \right ) 
\sigma_{(n-1)(j-i-1)}^{(r-k)}(\x_i, \dots, \x_{j-1})
\left ( \prod_{s = 0}^{n-k-2}x_j^{(r-k+j-i-1-s)} \right ),
\]
\[
\bar{\sigma}_{(n-1)(j-i)}^{(r)}(\x_i, \dots, \x_j) = 
\sum_{k = 0}^{n-1}
\left ( \prod_{t = 0}^{k-1}x_i^{(r-t)} \right ) 
\bar{\sigma}_{(n-1)(j-i-1)}^{(r-k)}(\x_{i+1}, \dots, \x_{j})
\left ( \prod_{s = 0}^{n-k-2}x_j^{(r-k+j-i-1-s)} \right ).
\]
\end{lemma}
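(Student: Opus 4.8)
The plan is to prove both identities combinatorially, by exhibiting a monomial-preserving bijection between the terms occurring on the two sides; this even establishes the identity at the level of formal sums, before like monomials are collected. The starting point is the observation that, because in the definitions of $\tau$, $\sigma$, $\bar\sigma$ the superscripts are forced to decrease by $1$ from $r$, a term of $\sigma_k^{(r)}(\x_a,\dots,\x_b)$ is precisely the datum of a weakly increasing word $a\le l_1\le\dots\le l_k\le b$ in which each of $\x_{a+1},\dots,\x_b$ occurs at most $n-1$ times (the first vector $\x_a$ being unrestricted), the associated monomial being $\prod_t x_{l_t}^{(r-t+1)}$ with superscripts read modulo $n$; dually, a term of $\bar\sigma_k^{(r)}(\x_a,\dots,\x_b)$ is such a word with $\x_a,\dots,\x_{b-1}$ restricted and $\x_b$ unrestricted. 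I would first record this dictionary, checking it against the definitions.

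For the $\sigma$ identity, fix a term of $\sigma_{(n-1)(j-i)}^{(r)}(\x_i,\dots,\x_j)$, say the word $w$, and let $e$ be the number of occurrences of $\x_j$ in $w$; the defining restriction forces $0\le e\le n-1$, so we may set $k=n-1-e\in\{0,\dots,n-1\}$. The crucial step --- and the only one that is not pure bookkeeping --- is a degree count: the vectors $\x_{i+1},\dots,\x_{j-1}$ contribute at most $(n-1)(j-i-1)$ letters to $w$, so the number of occurrences of $\x_i$ is at least $(n-1)(j-i)-e-(n-1)(j-i-1)=(n-1)-e=k$. Hence $w$ begins with at least $k$ copies of $\x_i$ and ends with exactly $e=n-1-k$ copies of $\x_j$; deleting the first $k$ and the last $n-1-k$ letters of $w$ leaves a term of $\sigma_{(n-1)(j-i-1)}^{(r-k)}(\x_i,\dots,\x_{j-1})$ (the shift $r\mapsto r-k$ recording the deleted prefix), and the deleted prefix and suffix are exactly $\prod_{t=0}^{k-1}x_i^{(r-t)}$ and $\prod_{s=0}^{n-k-2}x_j^{(r-k+j-i-1-s)}$ after reducing the superscripts modulo $n$ (using $(n-1)(j-i-1)\equiv 1-(j-i)$). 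The inverse map concatenates $k$ copies of $\x_i$, an arbitrary term of $\sigma_{(n-1)(j-i-1)}^{(r-k)}(\x_i,\dots,\x_{j-1})$, and $n-1-k$ copies of $\x_j$; one checks this produces a term of the left-hand side, and since $k$ is recovered from it as $n-1$ minus the number of occurrences of $\x_j$, the two maps are mutually inverse. Summing over $k=0,\dots,n-1$ yields the claimed decomposition.

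The $\bar\sigma$ identity is the mirror image: for a term $w$ of $\bar\sigma_{(n-1)(j-i)}^{(r)}(\x_i,\dots,\x_j)$, let $k$ be the number of occurrences of $\x_i$ (so $0\le k\le n-1$), the analogous degree count forces $w$ to contain at least $n-1-k$ copies of $\x_j$, and deleting the first $k$ letters (all equal to $\x_i$) and the last $n-1-k$ letters (all equal to $\x_j$) leaves a term of $\bar\sigma_{(n-1)(j-i-1)}^{(r-k)}(\x_{i+1},\dots,\x_j)$. I expect the main obstacle to be entirely bookkeeping: verifying that the deleted blocks carry exactly the superscripts written in the statement (a modular computation of the kind indicated above) and that the residual word still satisfies the multiplicity constraints defining the smaller $\sigma$ (resp.\ $\bar\sigma$). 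Everything else is an unwinding of the definitions. The same computation can also be phrased algebraically, without reference to words: expand $\sigma_{(n-1)(j-i)}^{(r)}(\x_i,\dots,\x_j)$ by peeling off the block of $\x_j$'s, identify the resulting inner sum as $\sigma_{(n-1)(j-i)-e}^{(r)}(\x_i,\dots,\x_{j-1})$, then factor $k=n-1-e$ leading copies of $\x_i$ out of that --- which is legitimate exactly because of the degree count above --- and reindex by $k$.
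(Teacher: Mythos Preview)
Your proposal is correct and is essentially the paper's own argument: the paper also groups terms of $\sigma_{(n-1)(j-i)}^{(r)}(\x_i,\dots,\x_j)$ by the number of $\x_j$'s (at most $n-1$), invokes the same degree count to see that the remaining $\sigma$ in $\x_i,\dots,\x_{j-1}$ must begin with at least $n-1-e$ copies of $\x_i$, factors those out, and reindexes by $k=n-1-e$; your final paragraph is a near-verbatim description of this. The only difference is presentational: you phrase the decomposition as a bijection on words, whereas the paper states it as two equalities of polynomials, but the underlying idea and the key step (the pigeonhole degree count forcing the $\x_i$-prefix) are identical.
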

\begin{proof}
We sketch the proof of the first identity. The second identity is exactly dual. 

We can group the terms of $\sigma_{(n-1)(j-i)}^{(r)}(\x_i,\dots,\x_j)$ by the number of times $\x_j$ variables are used at the end. By definition of the $\sigma$ functions, $\x_j$ can appear at most $n-1$ times.
\begin{align*}
    \sigma_{(n-1)(j-i)}^{(r)}(\x_i, \dots, \x_j) 
    &= \sum_{k = 0}^{n-1}
\sigma_{(n-1)(j-i)-k}^{(r)}(\x_i, \dots, \x_{j-1})
\left ( \prod_{s = 0}^{k-1}x_j^{(r-k+j-i-1-s)} \right ) \\ 
    &= 
\sum_{k = 0}^{n-1}
\left ( \prod_{t = 0}^{k-1}x_i^{(r-t)} \right ) 
\sigma_{(n-1)(j-i-1)}^{(r-k)}(\x_i, \dots, \x_{j-1})
\left ( \prod_{s = 0}^{n-k-2}x_j^{(r-k+j-i-1-s)} \right ),
\end{align*}
Since all terms of $\sigma_{(n-1)(j-i)-k}^{(r-k)}(\x_i, \dots, \x_{j-1})$ must use $\x_i$ at least $n-1-k$ times, the second equality holds by a change of summation index from $k$ to $n-1-k$.
\end{proof}

\subsection{1-Shifts}\label{sec:oneshifts}
In this section, we state explicit formulas for the action of a permutation of the form $s_is_{i+1}\dots s_{j-1}$ and $s_{j-1}s_{j-2}\dots s_{i}$, where $1 \leq i < j \leq m$. Such permutations are shifts by 1 for $i<k<j$ so we call them 1-shifts.

\begin{theorem}[\cite{LP2010} Lemma 3.1] \label{thm:3.1}
Let $1 \leq i < j \leq m$. Then
\[
\kappa_r(s_{j-2}s_{j-3}\cdots s_{i}(\x_{j-1}),\x_j)
= 
\frac{
\sigma_{(n-1)(j-i)}^{(r-j+i)}(\x_i, \dots, \x_j)
}
{
\sigma_{(n-1)(j-i-1)}^{(r-j+i)}(\x_i, \dots,\x_{j-1})
}
\]
and
\[
s_{j-1}\dots s_i (x_j^{(r)}) 
=
\frac{
x_i^{(r-j+i)}\sigma_{(n-1)(j-i)}^{(r-j+i-1)}(\x_i, \dots, \x_j)
}{
\sigma_{(n-1)(j-i)}^{(r-j+i)}(\x_i, \dots, \x_j)
}.
\]
\end{theorem}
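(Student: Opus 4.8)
The plan is to prove both identities together by induction on $j-i$, using the recursive structure of the $\sigma$ functions provided by Lemma~\ref{lem:sigmaidentity}. The base case $j = i+1$ should reduce directly to the definition of $\eta$: here $s_{j-2}\cdots s_i$ is the empty permutation, so we need $\kappa_r(\x_{j-1},\x_j) = \sigma_{n-1}^{(r-1)}(\x_{j-1},\x_j)/\sigma_0^{(r-1)}(\x_{j-1})$ and $s_i(x_j^{(r)}) = x_{j-1}^{(r-1)}\sigma_{n-1}^{(r-2)}(\x_{j-1},\x_j)/\sigma_{n-1}^{(r-1)}(\x_{j-1},\x_j)$. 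The first of these is a matter of expanding $\sigma_{n-1}^{(r-1)}(\x_{j-1},\x_j)$ by the definition and matching it termwise against the definition of $\kappa_r$; the denominator $\sigma_0^{(r-1)}(\x_{j-1}) = 1$. The second is then exactly the formula for $b_i'$ in the definition of $\eta$ after this substitution for $\kappa$, with the shift in upper indices accounting for the fact that $\eta$ outputs the pair $(\b',\a')$ in that order.

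For the inductive step, I would write $s_{j-1}\cdots s_i = s_{j-1}\circ (s_{j-2}\cdots s_i)$ and peel off the outermost $s_{j-1} = \eta_{j-1}$. Concretely, after applying $s_{j-2}\cdots s_i$, the pair in positions $j-1,j$ becomes $(s_{j-2}\cdots s_i(\x_{j-1}),\x_j)$, and then $\eta_{j-1}$ acts on just this pair via the formula for $\eta$. So $s_{j-1}\cdots s_i(x_j^{(r)})$ equals the $\b'$-component of $\eta$ applied to that pair, which by the definition of $\eta$ is $(s_{j-2}\cdots s_i(\x_{j-1}))^{(r-1)}\cdot \kappa_{r-1}/\kappa_r$ evaluated at that pair — wait, one must be careful with which vector and which index plays the role of $\b$; I would track this bookkeeping explicitly. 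The key point is that $\kappa_r(s_{j-2}\cdots s_i(\x_{j-1}),\x_j)$ is exactly the quantity computed in the first identity, and $(s_{j-2}\cdots s_i(\x_{j-1}))^{(r)}$ is $s_{j-2}\cdots s_i(x_{j-1}^{(r)})$, which is given by the inductive hypothesis (the second identity with $j$ replaced by $j-1$). Substituting these in and simplifying the resulting ratio of $\sigma$'s — using telescoping cancellation among the $\sigma_{(n-1)(j-i-1)}$ and $\sigma_{(n-1)(j-i-2)}$ terms — should yield the claimed closed form.

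To prove the first identity in the inductive step, I would instead expand $\kappa_r(s_{j-2}\cdots s_i(\x_{j-1}),\x_j)$ directly from the definition of $\kappa$, substitute the inductive formula for the components $s_{j-2}\cdots s_i(x_{j-1}^{(r)}) = s_{j-3}\cdots s_i$-shifted expressions — here one needs the inductive hypothesis for $j-1$ in place of $j$ — and then recognize the sum as $\sigma_{(n-1)(j-i)}^{(r-j+i)}(\x_i,\dots,\x_j)/\sigma_{(n-1)(j-i-1)}^{(r-j+i)}(\x_i,\dots,\x_{j-1})$ via Lemma~\ref{lem:sigmaidentity}. The recursion in Lemma~\ref{lem:sigmaidentity} is tailored exactly to splitting off the contributions of $\x_j$ at the end and $\x_i$ at the front, which matches the structure of $\kappa$ as a sum of products of consecutive $b$'s and $a$'s; this is why that lemma is stated first.

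The main obstacle I anticipate is the index bookkeeping: the superscripts are taken modulo $n$, the shift $r \mapsto r-j+i$ accumulates as the induction proceeds, and $\eta$ swaps the two vectors while shifting indices up and down differently for the $\a'$ and $\b'$ parts. Getting every superscript and every product range in Lemma~\ref{lem:sigmaidentity} to line up with the definition of $\kappa$ — and confirming that the "no index more than $n-1$ times" constraint in $\tau$ interacts correctly with the relaxed constraint in $\sigma$ when $\x_i$ appears as the shifted first argument — will be the delicate part. Since this is Lemma 3.1 of~\cite{LP2010}, I would also check whether their argument can simply be cited or lightly adapted rather than fully reproduced, but the inductive scheme above is the natural self-contained route.
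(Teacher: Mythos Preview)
Your proposal is correct and follows essentially the same route as the paper. The paper does not reprove Theorem~\ref{thm:3.1} directly (it is cited from \cite{LP2010}), but it supplies a full proof of the dual statement, Theorem~\ref{thm:3.1-dual}, remarking that this proof ``exactly emulates'' the one in \cite{LP2010}; that proof is precisely your scheme --- induction on $j-i$, both identities proved in parallel, the $\kappa$ identity in the inductive step obtained by expanding the defining sum, substituting the inductive formula for the transformed entries, and collapsing via Lemma~\ref{lem:sigmaidentity}, after which the second identity follows by one application of the $\eta$ formula.
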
 
The following lemma is the dual of \thmref{thm:3.1} and the proof exactly emulates the one in \cite{LP2010}.
\begin{theorem}[Dual of \thmref{thm:3.1}]\label{thm:3.1-dual}
Let $1 \leq i < j \leq m$. Then
\[
\kappa_r(
\x_i, 
s_{i+1} \dots s_{j-1}(\x_{i+1}))
= \frac{
\bar{\sigma}^{(r-1)}_{(n-1)(j-i)}(\x_i, \dots, \x_j)
}{
\bar{\sigma}^{(r)}_{(n-1)(j-i-1)}(\x_{i+1}, \dots, \x_j)
}
\]
and
\[s_i \dots s_{j-1}(x_i^{(r)}) 
=
\frac{
x_j^{(r+j-i)}\bar{\sigma}_{(n-1)(j-i)}^{(r)}(\x_i, \dots, \x_j)
}{
\bar{\sigma}_{(n-1)(j-i)}^{(r-1)}(\x_i, \dots, \x_j).
}\]
\end{theorem}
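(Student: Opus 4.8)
The plan is to mirror, line for line, the inductive proof of \thmref{thm:3.1} from \cite{LP2010}, simply replacing each $\sigma$ by $\bar\sigma$, each occurrence of $\x_j$ by $\x_i$, and each word $s_{j-1}\cdots s_i$ by its reverse $s_i\cdots s_{j-1}$. (One could instead try to deduce the statement from \thmref{thm:3.1} via the order-reversing involution $(\x_1,\dots,\x_m)\mapsto(\x_m,\dots,\x_1)$ composed with a cyclic relabeling of coordinates, but since Lemma~\ref{lem:sigmaidentity} has already been recorded in both a $\sigma$ and a $\bar\sigma$ form, the direct emulation is cleaner and fully self-contained.) I induct on $j-i$ and prove the two displayed identities together, at each stage establishing first the $\kappa$-identity and then the coordinate identity.

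For the base case $j=i+1$ both words are empty, so only the definition of $\eta$ is needed. Unwinding the definitions shows $\kappa_r(\x_i,\x_{i+1})=\bar\sigma_{n-1}^{(r-1)}(\x_i,\x_{i+1})$ while $\bar\sigma_0^{(r)}(\x_{i+1})=1$, which is the first identity; since $\eta_i$ replaces $\x_i$ by the vector whose $r$-th coordinate is $x_{i+1}^{(r+1)}\,\kappa_{r+1}(\x_i,\x_{i+1})/\kappa_r(\x_i,\x_{i+1})$, substituting the previous line gives the second identity.

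For the inductive step, write $s_i\cdots s_{j-1}=s_i\circ(s_{i+1}\cdots s_{j-1})$ and put $\tld{\x}_{i+1}=s_{i+1}\cdots s_{j-1}(\x_{i+1})$; the sub-word touches only positions $i+1,\dots,j$, so position $i$ still carries $\x_i$. For the $\kappa$-identity, expand $\kappa_r(\x_i,\tld{\x}_{i+1})=\bar\sigma_{n-1}^{(r-1)}(\x_i,\tld{\x}_{i+1})$ by the definition of $\bar\sigma$, substitute the inductive coordinate formula for each coordinate $\tld x_{i+1}^{(s)}$, and notice that the consecutive products $\tld x_{i+1}^{(s)}\tld x_{i+1}^{(s-1)}\cdots$ telescope in their $\bar\sigma_{(n-1)(j-i-1)}(\x_{i+1},\dots,\x_j)$ factors; the surviving expression is exactly the right-hand side of the $\bar\sigma$ identity in Lemma~\ref{lem:sigmaidentity} with superscript $r-1$, once one recognizes that a $\tau$ of a single vector is a decreasing monomial and reads superscripts modulo $n$. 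This yields the $\kappa$-identity at parameter $j-i$. For the coordinate identity, applying $\eta_i$ at the left end gives $s_i\cdots s_{j-1}(x_i^{(r)})=\bigl(s_{i+1}\cdots s_{j-1}(x_{i+1}^{(r+1)})\bigr)\,\kappa_{r+1}(\x_i,\tld{\x}_{i+1})/\kappa_r(\x_i,\tld{\x}_{i+1})$; feeding in the inductive coordinate formula for $x_{i+1}^{(r+1)}$ together with the $\kappa$-identity just proved (for both $\kappa_{r+1}$ and $\kappa_r$) makes every $\bar\sigma_{(n-1)(j-i-1)}(\x_{i+1},\dots,\x_j)$ factor cancel, leaving precisely the asserted formula.

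The genuine difficulty here is purely in the cyclic index bookkeeping: making the telescoped sum in the inductive step match the $\bar\sigma$ identity of Lemma~\ref{lem:sigmaidentity} \emph{exactly}, not merely up to relabeling. Two points need care: rewriting the single-vector $\tau$ and the trailing $\x_j$-monomial that appear after expanding $\kappa$ as the products $\prod_t x_i^{(\bullet)}$ and $\prod_s x_j^{(\bullet)}$ occurring in the lemma, and the repeated silent use of $r-n\equiv r\pmod n$ to reconcile a superscript such as $r-n$ with the expected $r$. It is worth tracking total degrees as a running check, since both sides of every identity are ratios of polynomials homogeneous of a predictable degree in the $\x$'s, and this catches essentially all off-by-one slips in the shifts.
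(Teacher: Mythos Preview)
Your proposal is correct and follows essentially the same argument as the paper's own proof: induction on $j-i$, with the base case reducing to the definition of $\eta$, and the inductive step computing $\kappa_r(\x_i,\tld{\x}_{i+1})$ by substituting the inductive coordinate formula, telescoping, and invoking the $\bar\sigma$ identity of Lemma~\ref{lem:sigmaidentity}, then deriving the coordinate formula from the $\kappa$-identity. Your added remarks about the order-reversing involution and the cyclic-index bookkeeping are useful context but do not change the underlying proof.
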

\begin{proof}
We prove the two statements in parallel by induction on $j - i$. For $j - i = 1$ they coincide with the formulae for the $\kappa_r$ and the $R$-action of $s_i$. By the induction assumption,
\[s_{i+1} \dots s_{j-1}(x_{i+1}^{(r)}) 
=
\frac{
x_j^{(r+j-i-1)}\bar{\sigma}_{(n-1)(j-i-1)}^{(r)}(\x_{i+1}, \dots, \x_j)
}{
\bar{\sigma}_{(n-1)(j-i-1)}^{(r-1)}(\x_{i+1}, \dots, \x_j)
}.\]

Therefore
\begin{align*}
    \kappa_r(
\x_i, 
s_{i+1}\dots s_{j-1}(\x_{i+1})) 
&= \sum_{s = 0}^{n-1}
\left [
\prod_{t = 1}^{s} s_{i+1}\dots s_{j-1}(x_{i+1}^{(r+t)})
\right ]
x_i^{(r+s+1)}\cdots x_i^{(r+n-1)} \\
&=\sum_{s = 0}^{n-1}
\left [
\prod_{t = 1}^{s} 
\frac{
x_j^{(r+t+j-i-1)}\bar{\sigma}_{(n-1)(j-i-1)}^{(r+t)}(\x_{i+1}, \dots, \x_j)
}{
\bar{\sigma}_{(n-1)(j-i-1)}^{(r+t-1)}(\x_{i+1}, \dots, \x_j)
}
\right ] x_i^{(r+s+1)}\cdots x_i^{(r+n-1)} \\ 
&= \sum_{s = 0}^{n-1} 
\left [ 
\prod_{t = 0}^{n-s-2} x_i^{(r+n-1-t)}
\right ]
\frac{
\bar{\sigma}_{(n-1)(j-i-1)}^{(r+s)}(\x_{i+1}, \dots, \x_j)
}{
\bar{\sigma}_{(n-1)(j-i-1)}^{(r)}(\x_{i+1}, \dots, \x_j)
}
\left [ 
\prod_{t = 0}^{s-1} x_{j}^{(r+s+j-i-1-t)}
\right ] \\
&= \frac{
\bar{\sigma}^{(r-1)}_{(n-1)(j-i)}(\x_i, \dots, \x_j)
}{
\bar{\sigma}^{(r)}_{(n-1)(j-i-1)}(\x_{i+1}, \dots, \x_j)
}.
\end{align*}
The last equality holds by \lemref{lem:sigmaidentity}. Now we can also prove the second claim, since
\begin{align*}
s_i \dots s_{j-1}(x_i^{(r)}) &=\frac{
    s_{i+1}\dots s_{j-1}(x_{i+1}^{(r+1)})
    \kappa_{r+1}(\x_i, s_{i+1}\dots s_{j-1}(\x_{i+1}))
}{
    \kappa_{r}(\x_i, s_{i+1}\dots s_{j-1}(\x_{i+1}))
} \\
&= x_j^{(r+j-i)} \frac{
\bar{\sigma}^{(r)}_{(n-1)(j-i)}(\x_{i}, \dots, \x_j)
}{
\bar{\sigma}^{(r-1)}_{(n-1)(j-i)}(\x_i, \dots, \x_j)
}.
\end{align*}
\end{proof}

We next consider what happens to $\x_i, \dots, \x_{j-1}$ under the action of $s_{j-1}\dots s_i$, and dually, what happens to $\x_{i+1}, \dots, \x_j$ under the action of $s_i \dots s_{j-1}$.
\begin{theorem} \label{lem:other_vars_cyc_shift}
Let $1 \leq i < j \leq m$. Then for $i \leq k < j$, 
\[
s_{j-1}\dots s_{i}(x_k^{(r)}) = 
\frac{x_{k+1}^{(r+1)}
\sigma_{(n-1)(k+1-i)}^{(r-k+i)}(\x_i, \dots, \x_{k+1})
\sigma_{(n-1)(k-i)}^{(r-k+i-1)}(\x_i, \dots, \x_k)
}{
\sigma_{(n-1)(k+1-i)}^{(r-k+i-1)}(\x_i, \dots, \x_{k+1})
\sigma_{(n-1)(k-i)}^{(r-k+i)}(\x_i, \dots, \x_k)
}.
\]
Similarly, for $i < k \leq j$,
\[
s_{i}\dots s_{j-1}(x_k^{(r)}) = 
\frac{x_{k-1}^{(r-1)}
\bar{\sigma}_{(n-1)(j-k+1)}^{(r-2)}(\x_{k-1}, \dots, \x_{j})
\bar{\sigma}_{(n-1)(j-k)}^{(r)}(\x_k, \dots, \x_j)
}{
\bar{\sigma}_{(n-1)(j-k+1)}^{(r-1)}(\x_{k-1}, \dots, \x_{j})
\bar{\sigma}_{(n-1)(j-k)}^{(r-1)}(\x_k, \dots, \x_j)
}.
\]
\end{theorem}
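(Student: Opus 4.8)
The plan is to derive both identities the same way: use that each $\eta_\ell$ only changes the vectors in positions $\ell$ and $\ell+1$ to collapse the long composition down to a single application of $\eta$ at position $k$, and then evaluate the two resulting $\kappa$'s with \thmref{thm:3.1} (resp.\ \thmref{thm:3.1-dual}).

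For the first formula I would begin with a locality observation: since $\eta_\ell$ fixes every position other than $\ell$ and $\ell+1$, the vector occupying position $k$ after $\eta_{j-1}\circ\cdots\circ\eta_i$ is unaffected by the factors $\eta_{k+1},\dots,\eta_{j-1}$, so $s_{j-1}\dots s_i(x_k^{(r)}) = s_k s_{k-1}\dots s_i(x_k^{(r)})$ for all $i\le k<j$; in particular the right side no longer involves $j$, and it suffices to treat $j=k+1$. In the remaining composition $\eta_k\circ\eta_{k-1}\circ\cdots\circ\eta_i$, the initial run $\eta_{k-1}\circ\cdots\circ\eta_i$ moves the descendant of $\x_i$ into position $k$ --- this vector is exactly $s_{k-1}\dots s_i(\x_k)$ in the sense of \thmref{thm:3.1} with $j$ replaced by $k+1$ --- while $\x_{k+1}$ stays in position $k+1$, and the final factor $\eta_k$ acts on this ordered pair. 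Setting $\a := s_{k-1}\dots s_i(\x_k)$, the defining formula for $\eta$ gives $s_k\dots s_i(x_k^{(r)}) = x_{k+1}^{(r+1)}\,\kappa_{r+1}(\a,\x_{k+1})/\kappa_r(\a,\x_{k+1})$.

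I would then substitute \thmref{thm:3.1}, with $(i,j)$ replaced by $(i,k+1)$, to rewrite $\kappa_r(\a,\x_{k+1})=\kappa_r\big(s_{k-1}\dots s_i(\x_k),\x_{k+1}\big)$ and $\kappa_{r+1}(\a,\x_{k+1})$ as ratios of two $\sigma$'s each (the degenerate case $k=i$ is included, since then $j=k+1>i$ and $s_{k-1}\dots s_k$ is the empty product), multiply out the two ratios, and simplify to land on the stated product of four $\sigma$ functions. The second identity is the mirror image: locality gives $s_i\dots s_{j-1}(x_k^{(r)}) = s_{k-1}s_k\dots s_{j-1}(x_k^{(r)})$, the run $\eta_k\circ\cdots\circ\eta_{j-1}$ deposits the descendant of $\x_j$ --- namely $s_k\dots s_{j-1}(\x_k)$ --- into position $k$, the last factor $\eta_{k-1}$ then acts on $(\x_{k-1},\,s_k\dots s_{j-1}(\x_k))$, and the defining formula for $\eta$ together with \thmref{thm:3.1-dual} applied with $(i,j)$ replaced by $(k-1,j)$ produces, after simplification, the four $\bar{\sigma}$ functions.

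I expect the difficulty here to be organizational rather than conceptual: one must state and check the locality claim cleanly, be careful about which entry of $\eta(\a,\b)$ lands in position $k$ (it is the first output, governed by the $b_i'=b_{i+1}\kappa_{i+1}/\kappa_i$ branch, in the first case, and the second output, governed by $a_i'=a_{i-1}\kappa_{i-1}/\kappa_i$, in the dual case), and --- most importantly --- track all of the superscript shifts ($r-k+i$ and $r-k+i-1$ on one side; $r-1$ and $r-2$ on the other) consistently when specializing the indices $i,j$ in \thmref{thm:3.1}/\thmref{thm:3.1-dual} and when forming the quotient of the two $\kappa$-values.
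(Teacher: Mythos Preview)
Your proposal is correct and matches the paper's proof essentially line for line: the paper also uses locality to reduce $s_{j-1}\dots s_i(x_k^{(r)})$ to $s_k s_{k-1}\dots s_i(x_k^{(r)})$, sets $s=s_{k-1}\dots s_i$, writes the outer $\eta_k$ as $x_{k+1}^{(r+1)}\kappa_{r+1}(s(\x_k),\x_{k+1})/\kappa_r(s(\x_k),\x_{k+1})$, and substitutes the $\kappa$-formula from \thmref{thm:3.1} with $j$ replaced by $k+1$; the dual half is declared to be analogous. Your identification of which branch of the $\eta$-formula applies in each case and your index bookkeeping are both right.
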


\begin{proof}
We prove the first part of the lemma. The second part is exactly dual. 

Let $s = s_{k-1}s_{k-2}\cdots s_{i}$. By \thmref{thm:3.1},
\[\kappa_r(s(\x_{k}),\x_{k+1})
= 
\frac{
\sigma_{(n-1)(k+1-i)}^{(r-k+i-1)}(\x_i, \x_{i+1}, \dots, \x_{k+1})
}
{
\sigma_{(n-1)(k-i)}^{(r-k+i-1)}(\x_i,\x_{i+1},\dots,\x_{k})
}.\]
So
\begin{align*}
    s_{j-1}s_{j-2}\dots s_i(x_k^{(r)}) &=
    s_{k}s_{k-1}\cdots s_i(x_{k}^{(r)}) \\
    &= s_k(s(x_k^{(r)})) \\
    &= x_{k+1}^{(r+1)}\frac{
    \kappa_{r+1}(s(\x_k), \x_{k+1})}{
    \kappa_{r}(s(\x_k), \x_{k+1})} \\ 
    &=x_{k+1}^{(r+1)}
    \frac{
    \sigma_{(n-1)(k+1-i)}^{(r-k+i)}(\x_i, \x_{i+1}, \dots, \x_{k+1})\sigma_{(n-1)(k-i)}^{(r-k+i-1)}(\x_i,\x_{i+1},\dots,\x_{k})
    }
    {
    \sigma_{(n-1)(k+1-i)}^{(r-k+i-1)}(\x_i, \x_{i+1}, \dots, \x_{k+1})\sigma_{(n-1)(k-i)}^{(r-k+i)}(\x_i,\x_{i+1},\dots,\x_{k})
    }.
\end{align*}
as desired.
\end{proof}

\subsection{Transpositions}\label{sec:trans}
In this section, we state formulas for the action of $s_k \dots s_{j-2}s_{j-1}s_{j-2}\dots s_i$, where $i \leq k < j$, and $s_{k-1} \dots s_{i+1}s_{i}s_{i+1}\dots s_{j-1}$, where $i < k \leq j$.  Note that this is the transposition that switches $i$ and $j$ when $k = i$ for the former permutation and $k = j$ for the latter. We assume throughout that $i < j-1$, as the case where $i = j-1$ is given by definition of the birational R-matrix action.

To state our formulas, we must first define the $\Omega$ functions.
\begin{definition}
For $i \leq k \leq j-1$, let
\[\Omega_{k}^{(r)}(\x_i, \dots, \x_j) = \sum_{\ell=0}^{n-1}\sigma^{(r)}_{(n-1)(k-i)+\ell}(\x_i,\dots,\x_k)\bar{\sigma}^{(r+k-i-\ell)}_{(n-1)(j-k)-\ell}(\x_{k+1},\dots,\x_j).\]
\end{definition}

\begin{example} The $\Omega$ functions generalize $\sigma$ and $\bar{\sigma}$ functions:
\begin{align*}
    \Omega_{j-1}^{(r)}(\x_i, \dots, \x_j) & = \sum_{\ell=0}^{n-1}\sigma^{(r)}_{(n-1)(j-i-1)+\ell}(\x_i,\dots,\x_{j-1})\bar{\sigma}^{(r+j-i-1-\ell)}_{n-1-\ell}(\x_j) \\ 
    & = \sum_{\ell=0}^{n-1}\sigma^{(r)}_{(n-1)(j-i-1)+\ell}(\x_i,\dots,\x_{j-1}) \prod_{t = 0}^{n-2-\ell}x_j^{(r-j-i-1-\ell-t)} \\ 
    & = \sigma^{(r)}_{(n-1)(j-i)}(\x_i, \dots, \x_j).
\end{align*}
Similarly, $\Omega_{i}^{(r)}(\x_i, \dots, \x_j) = \bar{\sigma}^{(r)}_{(n-1)(j-i)}(\x_i, \dots, \x_j)$.
\end{example}

Now we are ready to state our formulas for the action of $s_k \dots s_{j-2}s_{j-1}s_{j-2}\dots s_i$ and $s_{k-1} \dots s_{i+1}s_{i}s_{i+1}\dots s_{j-1}$.

\begin{theorem} \label{thm:trans_kappa} Let $s = s_k \dots s_{j-2}s_{j-1}s_{j-2}\dots s_i$. Then for $i < k < j$, 
\[
\kappa_r(s(x_{k-1}), s(x_{k})) = 
\frac{
\sigma_{(n-1)(k-i)}^{(r-k+i)}(\x_i, \dots, \x_k) \  \Omega_{k-1}^{(r-k+i)}(\x_i, \dots, \x_j)
}{
\sigma_{(n-1)(k-i-1)}^{(r-k+i)}(\x_i, \dots, \x_{k-1}) \  \Omega_{k}^{(r-k+i)}(\x_i, \dots, \x_j)
},
\]
and for $i \leq k < j$, 
\[
s(x_k^{(r)}) = x_j^{(r+j-k)}
\frac{
\sigma_{(n-1)(k-i)}^{(r-k+i-1)}(\x_i, \dots, \x_k) \ \Omega_{k}^{(r-k+i)}(\x_i, \dots,\x_j)
}{
\sigma_{(n-1)(k-i)}^{(r-k+i)}(\x_i, \dots, \x_k) \ \Omega_{k}^{(r-k+i-1)}(\x_i, \dots,\x_j)
}.
\]
\end{theorem}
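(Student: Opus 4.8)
The plan is to prove the two formulas simultaneously by downward induction on $k$, peeling the leftmost generator $s_k$ off of $s$. Write $s^{(k)} := s_k s_{k+1}\cdots s_{j-2}s_{j-1}s_{j-2}\cdots s_i$, so that $s^{(j-1)} = s_{j-1}s_{j-2}\cdots s_i$ is the $1$-shift treated in \thmref{lem:other_vars_cyc_shift}, and $s^{(k)} = s_k\,s^{(k+1)}$ whenever $i\le k<j-1$. Two structural observations drive the argument. First, the block $s_k s_{k+1}\cdots s_{j-2}$ only permutes the positions $k,k+1,\dots,j-1$, so it does not touch position $k-1$; hence $s^{(k)}(\x_{k-1}) = (s_{j-1}s_{j-2}\cdots s_i)(\x_{k-1})$, whose coordinates are given explicitly by \thmref{lem:other_vars_cyc_shift}. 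Second, $s^{(k)}(\x_k)$ is the vector produced in position $k$ when the birational $R$-matrix $s_k=\eta_k$ is applied to the pair $\big(s^{(k+1)}(\x_k),\,s^{(k+1)}(\x_{k+1})\big)$, so by definition of $\eta$ its $r$-th coordinate equals $s^{(k+1)}(x_{k+1}^{(r+1)})\cdot\kappa_{r+1}/\kappa_r$, where $\kappa_\bullet = \kappa_\bullet\big(s^{(k+1)}(\x_k),s^{(k+1)}(\x_{k+1})\big)$.

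For the base case $k=j-1$, the computation $\Omega^{(r)}_{j-1}(\x_i,\dots,\x_j)=\sigma^{(r)}_{(n-1)(j-i)}(\x_i,\dots,\x_j)$ from the example following the definition of $\Omega$ turns the second formula into the formula for $s_{j-1}\cdots s_i(x_{j-1}^{(r)})$ recorded in \thmref{lem:other_vars_cyc_shift}. For the inductive step, at each $k$ (descending from $j-1$) I would first derive the second formula for $s^{(k)}$ and then, when $i<k$, the first. The second formula follows from the two formulas at parameter $k+1$ by a purely mechanical computation: substitute $s^{(k+1)}(x_{k+1}^{(r+1)})$ (the second formula at $k+1$) and the ratio $\kappa_{r+1}/\kappa_r$ — where $\kappa_\bullet$ is the first formula at $k+1$, with $k$ and $k+1$ there playing the roles of the ``$k-1$'' and ``$k$'' in the statement — into the expression for $s^{(k)}(x_k^{(r)})$ from observation two. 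After telescoping the products of $\sigma$'s and of $\Omega$'s, every $\sigma(\x_i,\dots,\x_{k+1})$- and $\Omega_{k+1}$-factor cancels and one is left with exactly the claimed rational function; this is bookkeeping of the same flavour as the proof of \thmref{thm:3.1-dual}. (In the final step $k=i$ one uses the first formula at parameter $i+1$, which is legitimate since $i<i+1<j$.)

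The substance of the proof is the first formula. With $s^{(k)}(\x_{k-1})$ known from \thmref{lem:other_vars_cyc_shift} and $s^{(k)}(\x_k)$ known from the second formula just established, I would expand $\kappa_r\big(s^{(k)}(\x_{k-1}),s^{(k)}(\x_k)\big)=\sum_{t=0}^{n-1}\big(\prod_{u=1}^{t}s^{(k)}(x_k^{(r+u)})\big)\big(\prod_{u=t+1}^{n-1}s^{(k)}(x_{k-1}^{(r+u)})\big)$ directly from the definition of $\kappa$. The $\sigma(\x_i,\dots,\x_{k-1})$- and $\sigma(\x_i,\dots,\x_k)$-factors telescope and pull out of the summand, and what remains is a sum over $t$ whose summand is built from $\Omega_k^{(\bullet)}(\x_i,\dots,\x_j)$, monomials in $x_k$ and $x_j$, and $\sigma$'s; the content of the theorem is precisely that this sum collapses to $\Omega^{(\bullet)}_{k-1}(\x_i,\dots,\x_j)$ divided by a single factor of $\sigma(\x_i,\dots,\x_k)$. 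In other words, the whole statement rests on a recursion relating $\Omega_{k-1}$ and $\Omega_k$ — one that moves the ``pivot'' index by one, trading $\x_k$ between the $\sigma$-side and the $\bar{\sigma}$-side of the definition of $\Omega$ — and this recursion, together with \lemref{lem:sigmaidentity} and the boundary identities $\Omega_{j-1}=\sigma_{(n-1)(j-i)}$ and $\Omega_i=\bar{\sigma}_{(n-1)(j-i)}$, is exactly the technical lemma proved in Section~\ref{sec:transProof}. Establishing that recursion — by unwinding the definitions of $\sigma$, $\bar{\sigma}$, and $\Omega$ and matching monomials, or via a secondary induction on the number of tuples — is where essentially all of the work lies; the rest of the proof is formal manipulation of the kind already carried out for the $1$-shift formulas.
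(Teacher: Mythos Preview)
Your proposal is correct and follows essentially the same approach as the paper: downward induction on $k$ with base case $k=j-1$ coming from \thmref{lem:other_vars_cyc_shift}, deriving the $\kappa$-formula at each step by expanding $\kappa_r\big(s^{(k)}(\x_{k-1}),s^{(k)}(\x_k)\big)$ with the $1$-shift formula for position $k-1$ and the just-established second formula for position $k$, and then using the $\kappa$-formula and the $b'$-coordinate of $\eta$ to push the second formula down to $k-1$. You have also correctly identified that the entire content resides in the $\Omega_{k-1}$--$\Omega_k$ recursion, which is precisely \lemref{thm:identity} proved in Section~\ref{sec:transProof}.
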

\begin{theorem} \label{thm:trans_kappa_dual}
Let $s = s_{k-1}\dots s_{i+1}s_i s_{i+1} \dots s_{j-1}$. Then for $i < k < j$,
\[
\kappa_r(s(x_k), s(x_{k+1})) = 
\frac{\bar{\sigma}_{(n-1)(j-k)}^{(r-1)}(\x_k, \dots, \x_j) \Omega_k^{(r-k+i-1)}(\x_i, \dots, \x_j)}
{\bar{\sigma}_{(n-1)(j-k-1)}^{(r)}(\x_{k+1}, \dots, \x_j) \Omega_{k-1}^{(r-k+i-1)}(\x_i, \dots, \x_j)},
\]
and for $i < k \leq j$,
\[
s(x_k^{(r)}) = x_i^{(r-k+i)} \frac{\bar{\sigma}_{(n-1)(j-k)}^{(r)}(\x_k, \dots, \x_j)\Omega_{k-1}^{(r-k+i-1)}(\x_i, \dots, \x_j)}
{\bar{\sigma}_{(n-1)(j-k)}^{(r-1)}(\x_k, \dots, \x_j)\Omega_{k-1}^{(r-k+i)}(\x_i, \dots, \x_j)}.
\]
\end{theorem}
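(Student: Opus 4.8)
The plan is to derive Theorem~\ref{thm:trans_kappa_dual} from Theorem~\ref{thm:trans_kappa} by exploiting the reversal symmetry that already links $\sigma$ with $\bar\sigma$. For a vector $\x=(x^{(1)},\dots,x^{(n)})$ write $\x^\vee$ for its cyclic reflection, $(\x^\vee)^{(r)}=x^{(1-r)}$ with upper indices mod $n$; this is an involution on $\R^n_{>0}$. Fixing the window $[i,j]$, let $D$ be the operation on $m$-tuples that reverses the block of vectors in positions $i,\dots,j$ and replaces each of them by its $\vee$, leaving all other positions alone. A direct computation from the definition of $\kappa$ gives $\kappa_r(\a,\b)=\kappa_{1-r}(\b^\vee,\a^\vee)$, and combining this with the defining formulas for $\a'$ and $\b'$ gives the intertwining relation $D\circ\eta_\ell=\eta_{i+j-1-\ell}\circ D$ for all $i\le\ell\le j-1$. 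Since both permutations in question involve only $\eta_i,\dots,\eta_{j-1}$, it follows that $D(w(\x_\bullet))=\bar w(D(\x_\bullet))$ for any word $w$ in $s_i,\dots,s_{j-1}$, where $\bar w$ is obtained from $w$ by replacing every $s_\ell$ with $s_{i+j-1-\ell}$. In particular, if $s=s_k\cdots s_{j-2}s_{j-1}s_{j-2}\cdots s_i$ as in Theorem~\ref{thm:trans_kappa}, then $\bar s=s_{k'-1}\cdots s_{i+1}s_is_{i+1}\cdots s_{j-1}$ is exactly the permutation of Theorem~\ref{thm:trans_kappa_dual} with parameter $k':=i+j-k$, and as $k$ runs over the range allowed in Theorem~\ref{thm:trans_kappa} the index $k'$ runs over exactly the range allowed in Theorem~\ref{thm:trans_kappa_dual}.

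Next I would record the behaviour of the auxiliary functions under $D$. Tracking monomials gives $\sigma_d^{(r)}(\x_i,\dots,\x_j)=\bar\sigma_d^{(d-r)}(\x_j^\vee,\dots,\x_i^\vee)$ and, dually, $\bar\sigma_d^{(r)}(\x_i,\dots,\x_j)=\sigma_d^{(d-r)}(\x_j^\vee,\dots,\x_i^\vee)$. Substituting these into the definition of $\Omega$ and reindexing the $\ell$-sum by $\ell\mapsto n-1-\ell$ yields an identity of the form $\Omega_k^{(r)}(\x_i,\dots,\x_j)=\Omega_{i+j-1-k}^{(r^\star)}(\x_j^\vee,\dots,\x_i^\vee)$ with $r^\star$ the appropriately shifted superscript, consistent with the boundary cases $\Omega_{j-1}=\sigma$ and $\Omega_i=\bar\sigma$ already noted in the text. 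Armed with this dictionary, I would apply Theorem~\ref{thm:trans_kappa} to the tuple $\mathbf z_\bullet:=D(\x_\bullet)$ with parameter $k$. Because $\mathbf z_\ell=\x_{i+j-\ell}^\vee$ and the intertwining relation turns the $k$-th vector of $s(\mathbf z_\bullet)$ into $(\bar s(\x_{k'}))^\vee$, the left-hand side $s(z_k^{(r)})$ equals $\bar s(x_{k'}^{(1-r)})$, and likewise $\kappa_r(s(x_{k-1}),s(x_k))$ for $\mathbf z_\bullet$ equals $\kappa_{1-r}(\bar s(x_{k'}),\bar s(x_{k'+1}))$ for $\x_\bullet$; rewriting the right-hand sides in terms of the original $\x$'s using $z_\ell^{(t)}=x_{i+j-\ell}^{(1-t)}$, the $\sigma\leftrightarrow\bar\sigma$ and $\Omega$-translations above, and $\kappa_r(\a,\b)=\kappa_{1-r}(\b^\vee,\a^\vee)$, and then setting $r':=1-r$, produces precisely the two formulas of Theorem~\ref{thm:trans_kappa_dual}.

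The main obstacle is bookkeeping rather than structure. One must fix the normalization of $\vee$ by testing it against $\kappa_r$ and the $s_i$-action directly (the shapes of the monomials alone do not pin it down), and then propagate the cascade of index reflections $r\mapsto 1-r$, $\ell\mapsto i+j-1-\ell$, $d\mapsto d-r$ with enough care that the superscripts $r-1$, $r$, $r-k+i-1$, $r-k+i$, $r+j-k$ appearing in Theorem~\ref{thm:trans_kappa_dual} land exactly, with no off-by-one slippage; the mod-$n$ wraparound in the superscript strings of $\sigma$ and $\bar\sigma$ is what makes this step delicate. As an alternative that does not route through a reversal symmetry, one could mirror the proof of Theorem~\ref{thm:trans_kappa} directly: factor $\bar s=(s_{k-1}\cdots s_i)(s_{i+1}\cdots s_{j-1})$, compute the action of the inner $1$-shift $s_{i+1}\cdots s_{j-1}$ using Theorems~\ref{thm:3.1-dual} and~\ref{lem:other_vars_cyc_shift}, then the outer $1$-shift $s_{k-1}\cdots s_i$ using Theorems~\ref{thm:3.1} and~\ref{lem:other_vars_cyc_shift}, collapse the resulting double sum into an $\Omega$ via the technical lemma of Section~\ref{sec:transProof} (or its dual) together with Lemma~\ref{lem:sigmaidentity}, and establish the two formulas in tandem by induction via the relation between a single transformed coordinate and a ratio of $\kappa$'s, exactly as in the proofs of Theorems~\ref{thm:3.1-dual} and~\ref{lem:other_vars_cyc_shift}. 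The reversal argument is the shorter route, since it reuses Theorem~\ref{thm:trans_kappa} essentially verbatim.
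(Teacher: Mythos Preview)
Your proposal is sound. The paper does not actually give a proof of Theorem~\ref{thm:trans_kappa_dual}; it only asserts that the result is ``entirely dual'' to Theorem~\ref{thm:trans_kappa} and then proves the latter. Your alternative route---redo the induction of Theorem~\ref{thm:trans_kappa} with the dual $1$-shift formulas from Theorems~\ref{thm:3.1-dual} and~\ref{lem:other_vars_cyc_shift} and a dual of Lemma~\ref{thm:identity}---is exactly what the paper intends by that phrase. Your primary route, building an explicit reversal involution $D$ that intertwines $\eta_\ell$ with $\eta_{i+j-1-\ell}$ and swaps $\sigma\leftrightarrow\bar\sigma$, $\Omega_k\leftrightarrow\Omega_{i+j-1-k}$, is a genuinely different and more economical realization of the same duality: it lets you import Theorem~\ref{thm:trans_kappa} wholesale rather than reprove the technical Lemma~\ref{thm:identity} in dual form. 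The trade-off is precisely the index bookkeeping you flag---the normalization of $\vee$ and the superscript shift on $\Omega$ both need to be pinned down carefully---but there is no structural gap.
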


\begin{theorem} \label{thm:trans_formula} For $1 \leq i < j \leq m$ and $i < k < j$,
\[
s_i \dots s_{j-2} s_{j-1} s_{j-2} \dots s_i(x_k^{(r)}) = x_k^{(r)}
\frac{
\Omega_{k}^{(r-k+i)}(\x_i, \dots, \x_j) \ 
\Omega_{k-1}^{(r-k+i-1)}(\x_i, \dots, \x_j) 
}
{
\Omega_{k-1}^{(r-k+i)}(\x_i, \dots, \x_j) \ 
\Omega_{k}^{(r-k+i-1)}(\x_i, \dots, \x_j) 
}.
\]
\end{theorem}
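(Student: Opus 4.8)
The plan is to peel simple transpositions off the front of the word and reduce everything to formulas already in hand. For $i \le \ell < j$ set
\[ s^{(\ell)} \;:=\; s_\ell\, s_{\ell+1}\cdots s_{j-2}\,s_{j-1}\,s_{j-2}\cdots s_{i+1}\,s_i, \]
so that $s^{(\ell)}$ is exactly the permutation denoted $s$ in \thmref{thm:trans_kappa}, and the permutation in the statement is $s^{(i)}$ (the transposition of $i$ and $j$). Iterating the identity $s^{(\ell-1)} = s_{\ell-1}\,s^{(\ell)}$ gives the factorization $s^{(i)} = s_i\,s_{i+1}\cdots s_{k-1}\,s^{(k)}$. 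Now apply $s^{(k)}$ first; of the remaining letters $s_i,\dots,s_{k-1}$, which act right to left, only $s_{k-1}$ moves the vector in position $k$, and it is applied immediately after $s^{(k)}$, while the subsequent $s_{k-2},\dots,s_i$ all leave position $k$ untouched. Hence the vector in position $k$ after $s^{(i)}$ equals the one obtained by applying $s^{(k)}$ and then the single transposition $s_{k-1}$, and the defining formula for the $R$-action of $s_{k-1}$ gives
\[ s^{(i)}(x_k^{(r)}) \;=\; s^{(k)}\!\big(x_{k-1}^{(r-1)}\big)\cdot\frac{\kappa_{r-1}\big(s^{(k)}(\x_{k-1}),\,s^{(k)}(\x_k)\big)}{\kappa_{r}\big(s^{(k)}(\x_{k-1}),\,s^{(k)}(\x_k)\big)}. \]

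Two ingredients now remain. The ratio of $\kappa$'s is precisely the first formula of \thmref{thm:trans_kappa} (legitimate since $i<k<j$), evaluated at subscripts $r$ and $r-1$. For the factor $s^{(k)}(x_{k-1}^{(r-1)})$, observe that every letter of $s^{(k)}$ except those in the trailing block $s_{k-1}s_{k-2}\cdots s_i$ has index at least $k$ and therefore fixes position $k-1$; consequently $s^{(k)}(x_{k-1}^{(r)}) = s_{k-1}s_{k-2}\cdots s_i(x_{k-1}^{(r)})$, and this is given by the first formula of \thmref{lem:other_vars_cyc_shift} with $j$ replaced by $k$ and $k$ replaced by $k-1$.

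It remains to substitute these three expressions, shift indices, and simplify. Once the factors are collected, the prefactor of $s^{(k)}(x_{k-1}^{(r-1)})$ is exactly $x_k^{(r)}$; every $\sigma$ factor — the four arising from $s^{(k)}(x_{k-1}^{(r-1)})$ and the four from the numerators and denominators of $\kappa_{r-1}$ and $\kappa_r$ — forms a single multiset in the numerator and in the denominator and so cancels entirely; and what survives is exactly
\[ x_k^{(r)}\;\frac{\Omega_{k}^{(r-k+i)}(\x_i,\dots,\x_j)\;\Omega_{k-1}^{(r-k+i-1)}(\x_i,\dots,\x_j)}{\Omega_{k-1}^{(r-k+i)}(\x_i,\dots,\x_j)\;\Omega_{k}^{(r-k+i-1)}(\x_i,\dots,\x_j)}, \]
as claimed. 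I expect the only delicate points to be the two reductions above — justifying that, for the entry in a fixed position, one may discard all but one of the simple transpositions (respectively, all but a trailing block) in the word — and tracking the index shifts carefully enough that the $\sigma$-cancellation is transparent; the remaining manipulation is routine.
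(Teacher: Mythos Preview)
Your proof is correct and follows essentially the same approach as the paper. The paper writes $s^{(i)}(x_k^{(r)}) = s_{k-1}s^{(k)}(x_k^{(r)})$, applies the $R$-action formula for $s_{k-1}$, observes that $s^{(k)}(x_{k-1}^{(r-1)}) = s_{j-1}\cdots s_i(x_{k-1}^{(r-1)})$ (equivalently $s_{k-1}\cdots s_i(x_{k-1}^{(r-1)})$, as you note), and then plugs in \thmref{lem:other_vars_cyc_shift} and \thmref{thm:trans_kappa} to see the $\sigma$-factors cancel and leave exactly the $\Omega$-ratio.
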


Since Theorem \ref{thm:trans_kappa_dual} is entirely dual to Theorem \ref{thm:trans_kappa}, we will limit our discussion and proof in the remainder of the paper to the case of $s = s_k \dots s_{j-2}s_{j-1}s_{j-2}\dots s_i$. 

Note that \thmref{thm:trans_kappa} and \thmref{thm:trans_formula} solve the action of $s = s_k \dots s_{j-2}s_{j-1}s_{j-2}\dots s_i$ completely:
\begin{itemize}
    \item If $i=k$, then the action of $s$ on $\x_i$ is the same as the action of $s_{i}s_{i+1}\dots s_{j-1}$, which is given in the previous section.
    \item For $i \leq \ell < k$, the action of $s$ on $\x_\ell$ is the same as the action of $s_\ell s_{\ell-1} \dots s_i$, which is given in the previous section.
    \item If $i\neq k$, the action of $s$ on $\x_k$ is given by \thmref{thm:trans_kappa}.
    \item For $k < \ell < j$, the action of $s$ on $\x_\ell$ is the same as the action of $s_i \dots s_{j-2} s_{j-1} s_{j-2} \dots s_i$, which is solved by \thmref{thm:trans_formula}.
    \item The action of $s$ on $\x_j$ is the same as the action of $s_{j-1} \dots s_i$, which is given in the previous section.
\end{itemize}

The key ingredient to prove \thmref{thm:trans_kappa} and \thmref{thm:trans_formula} is the following identity.

\begin{lemma}
\label{thm:identity} For $i < k \leq j-1$, the following identity of $\Omega_{k-1}$ and $\Omega_k$ holds:
\begin{align*}
& \left ( \prod_{t = 1}^{n-1}\sigma_{(n-1)(k-i)}^{(r-k+i+t)}(\x_i, \dots, \x_k) \right )\ \Omega_{k-1}^{(r-k+i)}(\x_i, \dots, \x_j)\\
=&\sum_{s=0}^{n-1} 
\prod_{t=r+1}^{r+s}x_{j}^{(t+j-k)}\prod_{t=r+s+1}^{r+n-1}x_{k}^{(t+1)} 
\prod_{t = s+2}^{s+n-1} \sigma_{(n-1)(k-i)}^{(r-k+i+t)}(\x_i, \dots, \x_k)\ \\
& \Omega_{k}^{(r-k+i+s)}(\x_i, \dots, \x_j)
\sigma_{(n-1)(k-i-1)}^{(r-k+i+s+1)}(\x_i, \dots, \x_{k-1})
\end{align*}
\end{lemma}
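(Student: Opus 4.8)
The plan is to prove this as an identity of Laurent polynomials in the entries $x_\bullet^{(\bullet)}$ by expanding both sides fully and matching. Write $R=r-k+i$ to lighten the notation. Two elementary facts carry most of the work. First, the superscript of every $\sigma$, $\bar\sigma$, and $\Omega$ function is periodic modulo $n$, so the products $\prod_{t=1}^{n-1}\sigma_{(n-1)(k-i)}^{(R+t)}(\x_i,\dots,\x_k)$ and $\prod_{t=s+2}^{s+n-1}\sigma_{(n-1)(k-i)}^{(R+t)}(\x_i,\dots,\x_k)$ each run over a set of superscripts forming a complete residue system modulo $n$ with, respectively, one or two residues deleted; recording this makes transparent how the prefactors on the two sides are related. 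Second, there are the single-variable peeling identities
\[
\sigma_d^{(u)}(\x_i,\dots,\x_k)=\sum_{p=0}^{n-1}\sigma_{d-p}^{(u)}(\x_i,\dots,\x_{k-1})\prod_{t=1}^{p}x_k^{(u-d+t)},\qquad
\bar\sigma_d^{(u)}(\x_k,\dots,\x_j)=\sum_{p=0}^{n-1}\Bigl(\prod_{t=0}^{p-1}x_k^{(u-t)}\Bigr)\bar\sigma_{d-p}^{(u-p)}(\x_{k+1},\dots,\x_j),
\]
each immediate from the definitions (the sums truncating at $n-1$ because an index may occur at most $n-1$ times in a $\tau$-monomial), together with the obvious $\x_j$-analogues.

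The main step is to expand both sides so as to separate the variable $\x_k$. On the left, expand $\Omega_{k-1}^{(R)}(\x_i,\dots,\x_j)$ by its definition and peel $\x_k$ out of the $\bar\sigma(\x_k,\dots,\x_j)$ factor; on the right, expand $\Omega_k^{(R+s)}(\x_i,\dots,\x_j)$ and peel $\x_k$ out of its $\sigma(\x_i,\dots,\x_k)$ factor, absorbing the monomial $\prod_{t=r+1}^{r+s}x_j^{(t+j-k)}$ into the $\bar\sigma(\x_{k+1},\dots,\x_j)$ factor via
\[
\Bigl(\prod_{t=0}^{s-1}x_j^{(u-D-t)}\Bigr)\bar\sigma_D^{(u)}(\x_{k+1},\dots,\x_j)=\sum_{q=s}^{D+s}\tau_{D+s-q}^{(u)}(\x_{k+1},\dots,\x_{j-1})\prod_{t=1}^{q}x_j^{(u-D-s+t)},
\]
i.e.\ the part of $\bar\sigma_{D+s}^{(u)}(\x_{k+1},\dots,\x_j)$ in which $\x_j$ occurs at least $s$ times. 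After also rewriting every $\sigma_{(n-1)(k-i)}^{(\bullet)}(\x_i,\dots,\x_k)$ prefactor through the first peeling identity, both sides become explicit sums of terms of the shape
\[
\bigl(\text{polynomial in }\x_i,\dots,\x_{k-1}\bigr)\ \times\ \bigl(\text{monomial in }x_k\bigr)\ \times\ \bigl(\text{polynomial in }\x_{k+1},\dots,\x_j\bigr),
\]
and the assertion reduces to matching these two sums.

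The term-by-term matching is where I expect the real difficulty to lie: it is pure bookkeeping, but delicate. One must identify the summation index $s$ on the right with the number of copies of $\x_k$ exposed at the cut on the left, and then check simultaneously that the interlocking superscripts --- carried by $\Omega_k$, by the two monomials in $x_j$ and $x_k$, by the surviving $\sigma(\x_i,\dots,\x_{k-1})$ factor, and by the $\sigma(\x_i,\dots,\x_k)$ prefactors --- all shift consistently under the pairing, and that every range constraint is respected (each peeling index and each summation index $\ell$ in the $\Omega$'s lies in $[0,n-1]$, and every $\tau$-degree that appears is nonnegative). I would set this up as an induction --- on $k-i$, say --- and treat the boundary cases separately: the case $k=i+1$, where $\sigma_{(n-1)(k-i-1)}^{(\bullet)}(\x_i,\dots,\x_{k-1})=\tau_0^{(\bullet)}=1$, is the specialization obtained from the general expansion above by dropping that trivial factor, and can also be derived directly from \lemref{lem:sigmaidentity} applied to $\bar\sigma_{(n-1)(j-i)}^{(\bullet)}(\x_i,\dots,\x_j)=\Omega_i^{(\bullet)}(\x_i,\dots,\x_j)$; and the case $k=j-1$, where $\bar\sigma_\bullet^{(\bullet)}(\x_{k+1},\dots,\x_j)$ is a single monomial in $\x_j$ and $\Omega_k^{(\bullet)}=\sigma_{(n-1)(j-i)}^{(\bullet)}(\x_i,\dots,\x_j)$, should be isolated as well.
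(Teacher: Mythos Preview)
Your overall plan---expand both $\Omega$'s, isolate a distinguished block of variables, and match---is exactly the shape of the paper's argument, but you have not located the step that makes the matching finite and tractable, and without it your outline does not close. The paper separates first by the $\x_j$-content (both $\Omega$'s contribute a single monomial $\prod_\beta x_j^{(r-k+\beta)}$ for each fixed $q$), then rewrites the remaining $\tau(\x_k,\dots,\x_{j-1})$ as a sum of $\tau(\x_{k+1},\dots,\x_{j-1})$, and only after all of this compares coefficients of each $\tau_{(n-1)(j-k+1)-m}^{(r-(m-q+1))}(\x_{k+1},\dots,\x_{j-1})$. Crucially, the products $\prod_{t=1}^{n-1}\sigma^{(R+t)}_{(n-1)(k-i)}(\x_i,\dots,\x_k)$ on the left and $\prod_{t=s+2}^{s+n-1}\sigma^{(R+t)}_{(n-1)(k-i)}(\x_i,\dots,\x_k)$ on the right are \emph{never} expanded via peeling; instead the paper proves a separate product identity (its \lemref{lem:identity}) showing that the partial sums $\sum_{s=0}^{k}T_s$ and $\sum_{s=k}^{n-1}T_s$ of the right-hand summands collapse to a single product involving an auxiliary polynomial $P_k^{(\bullet)}$. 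That collapse is what turns the coefficient comparison into a one-line application (their \lemref{lem:lhs=rhs}).

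By contrast, your plan to ``rewrite every $\sigma_{(n-1)(k-i)}^{(\bullet)}(\x_i,\dots,\x_k)$ prefactor through the first peeling identity'' replaces each of $n-1$ (respectively $n-2$) factors by a sum of $n$ terms, producing on the order of $n^{n-1}$ cross-terms with no visible recombination principle; the sentence ``the term-by-term matching is where I expect the real difficulty to lie'' is precisely where the proof is. The proposed induction on $k-i$ does not help here: the identity at level $k$ involves $\Omega_{k-1}$ and $\Omega_k$ together with $\sigma$'s on $\x_i,\dots,\x_k$ and $\x_i,\dots,\x_{k-1}$, and there is no evident way to manufacture the level-$(k-1)$ statement inside the level-$k$ expansion. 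Your claimed base case $k=i+1$ is also not a consequence of \lemref{lem:sigmaidentity} alone: that lemma expands a single $\bar\sigma$ as a sum, whereas even at $k=i+1$ the left side still carries the full product $\prod_{t=1}^{n-1}\sigma_{n-1}^{(R+t)}(\x_i,\x_{i+1})$, and one needs the same partial-sum collapse as in the general case. The missing idea, in short, is the telescoping product formula for $\sum_{s}T_s$; once you have it, the bookkeeping you describe becomes a two-case computation rather than an $n^{n-1}$-term expansion.
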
 

\begin{example}
Consider the case where $i = 1, j=r = n = 4$, and $k = 3$. Since $\Omega_3$ is the $\sigma$ function, this identity says that
\begin{align*}
    \ & \sigma_{6}^{(3)}(\x_1,\x_2,\x_3)\sigma_{6}^{(4)}(\x_1,\x_2,\x_3)\sigma_{6}^{(1)}(\x_1,\x_2,\x_3) 
    \Omega_{2}^{(2)}(\x_1,\x_2,\x_3,\x_4) \\ = \ & \x_3^{(2)}\x_3^{(3)}\x_3^{(4)}\sigma_{9}^{(2)}(\x_1,\x_2,\x_3,\x_4)\sigma_3^{(3)}(\x_1,\x_2)\sigma_{6}^{(4)}(\x_1,\x_2,\x_3)\sigma_{6}^{(1)}(\x_1,\x_2,\x_3) \\ 
    + \ & \x_4^{(2)}\x_3^{(3)}\x_3^{(4)}\sigma_{6}^{(2)}(\x_1,\x_2,\x_3)\sigma_{9}^{(3)}(\x_1,\x_2,\x_3,\x_4)\sigma_3^{(4)}(\x_1,\x_2)\sigma_{6}^{(1)}(\x_1,\x_2,\x_3) \\ 
    + \ & \x_4^{(2)}\x_4^{(3)}\x_3^{(4)}\sigma_{6}^{(2)}(\x_1,\x_2,\x_3)\sigma_{6}^{(3)}(\x_1,\x_2,\x_3)\sigma_{9}^{(4)}(\x_1,\x_2,\x_3,\x_4)\sigma_3^{(1)}(\x_1,\x_2) \\ 
    + \ & \x_4^{(2)}\x_4^{(3)}\x_4^{(4)}\sigma_3^{(2)}(\x_1,\x_2) \sigma_{6}^{(3)}(\x_1,\x_2,\x_3)\sigma_{6}^{(4)}(\x_1,\x_2,\x_3)\sigma_{9}^{(1)}(\x_1,\x_2,\x_3,\x_4). 
\end{align*}
\end{example}

The following proof assumes Lemma~\ref{thm:identity}, which is proven in Section~\ref{sec:transProof}.

\begin{proof}[Proof of \thmref{thm:trans_kappa}]
We proceed by induction on $k$. When $k = j-1$, by \thmref{lem:other_vars_cyc_shift}, indeed
\[
s(x_{j-1}^{(r)}) = s_{j-1}s_{j-2}\dots s_i(x_{j-1}^{(r)})
= x_j^{(r+1)}
\frac{
\sigma_{(n-1)(j-i-1)}^{(r-j+i)}(\x_i, \dots, \x_{j-1}) \ \sigma_{(n-1)(j-i)}^{(r-j+i+1)}(\x_i, \dots,\x_j)
}{
\sigma_{(n-1)(j-i-1)}^{(r-j+i+1)}(\x_i, \dots, \x_{j-1}) \ \sigma_{(n-1)(j-i)}^{(r-j+i)}(\x_i, \dots,\x_j)
}.
\]
Now let $s = s_k \dots s_{j-2}s_{j-1}s_{j-2}\dots s_i$ and suppose that 
\[
s(x_k^{(r)}) = x_j^{(r+j-k)}
\frac{
\sigma_{(n-1)(k-i)}^{(r-k+i-1)}(\x_i, \dots, \x_k) \ \Omega_{k}^{(r-k+i)}(\x_i, \dots,\x_j)
}{
\sigma_{(n-1)(k-i)}^{(r-k+i)}(\x_i, \dots, \x_k) \ \Omega_{k}^{(r-k+i-1)}(\x_i, \dots,\x_j)
}.
\]
By \thmref{lem:other_vars_cyc_shift}, 
\[s(x_{k-1}^{(r)}) = s_{j-1}s_{j-2}\dots s_i(x_{k-1}^{(r)}) = \frac{x_{k}^{(r+1)}
\sigma_{(n-1)(k-i)}^{(r-k+i+1)}(\x_i, \dots, \x_{k})
\sigma_{(n-1)(k-i-1)}^{(r-k+i)}(\x_i, \dots, \x_{k-1})
}{
\sigma_{(n-1)(k-i)}^{(r-k+i)}(\x_i, \dots, \x_{k})
\sigma_{(n-1)(k-i-1)}^{(r-k+i+1)}(\x_i, \dots, \x_{k-1})
}.\]
We may use the above to calculate the $\kappa$ function:
\begin{align*}
\kappa_r(s(\x_{k-1}),s(\x_{k}))
=& \sum_{s =0}^{n-1}\prod_{t=r+1}^{r+s}s(x_k^{(t)})\prod_{t = r+s+1}^{r+n-1}s(x_{k-1}^{(t)}) \\ 
=& \sum_{s = 0}^{n-1}\prod_{t=r+1}^{r+s}x_j^{(r+j-k)}
\frac{
\sigma_{(n-1)(k-i)}^{(t-k+i-1)}(\x_i, \dots, \x_k) \ \Omega_{k}^{(t-k+i)}(\x_i, \dots,\x_j)
}{
\sigma_{(n-1)(k-i)}^{(t-k+i)}(\x_i, \dots, \x_k) \ \Omega_{k}^{(t-k+i-1)}(\x_i, \dots,\x_j)
}\\ 
    & \prod_{t = r+s+1}^{r+n-1}\frac{x_{k}^{(t+1)}
\sigma_{(n-1)(k-i)}^{(t-k+i+1)}(\x_i, \dots, \x_{k})
\sigma_{(n-1)(k-i-1)}^{(t-k+i)}(\x_i, \dots, \x_{k-1})
}{
\sigma_{(n-1)(k-i)}^{(t-k+i)}(\x_i, \dots, \x_{k})
\sigma_{(n-1)(k-i-1)}^{(t-k+i+1)}(\x_i, \dots, \x_{k-1})
} \\ 
=& 
\sum_{s=0}^{n-1}
\left( \prod_{t = r+1}^{r+s}x_j^{(t+j-k)} \right) 
\left( \prod_{t = r+s+1}^{r+n-1}x_k^{(t+1)} \right) 
\frac{
\sigma_{(n-1)(k-i)}^{(r-k+i)}(\x_i, \dots, \x_k)
\ \Omega_{k}^{(r-k+i+s)}(\x_i, \dots,\x_j)
}{
\sigma_{(n-1)(k-i)}^{(r-k+i+s)}(\x_i, \dots, \x_k)
\ \Omega_{k}^{(r-k+i)}(\x_i, \dots,\x_j)
} \\
& \ \frac{
\sigma_{(n-1)(k-i)}^{(r-k+i)}(\x_i, \dots, \x_k)
\sigma_{(n-1)(k-i-1)}^{(r-k+i+s+1)}(\x_i, \dots,\x_{k-1})
}{
\sigma_{(n-1)(k-i)}^{(r-k+i+s+1)}(\x_i, \dots, \x_k)
\sigma_{(n-1)(k-i-1)}^{(r-k+i)}(\x_i, \dots,\x_{k-1})
} \\
= & \
\frac{
\sigma_{(n-1)(k-i)}^{(r-k+i)}(\x_i, \dots, \x_k)
\Omega_{k-1}^{(r-k+i)}(\x_i, \dots,\x_j)
}{
\sigma_{(n-1)(k-i-1)}^{(r-k+i)}(\x_i, \dots,\x_{k-1})\Omega_k^{(r-k+i)}(\x_i, \dots,\x_j)
}, 
\end{align*}
where the last equality is by Lemma~\ref{thm:identity}.
We can now calculate the action of $s$ on $\x_k$:
\begin{align*}
    & \ \ \ \ s_{k-1}s_k \dots s_{j-2}s_{j-1}s_{j-2}\dots s_i(x_{k-1}^{(r)}) \\ 
    &= s_{k-1}(s(x_{k-1}^{(r)})) \\ 
    &= s(x_k^{(r+1)})
    \frac{
    \kappa_{r+1}(s(\x_{k-1}), s(\x_{k}))
    }{
    \kappa_{r}(s(\x_{k-1}), s(\x_{k}))
    } \\ 
    &= x_j^{(r+j-k+1)}
\frac{
\sigma_{(n-1)(k-i)}^{(r-k+i)}(\x_i, \dots, \x_k) \ \Omega_{k}^{(r-k+i+1)}(\x_i, \dots,\x_j)
}{
\sigma_{(n-1)(k-i)}^{(r-k+i+1)}(\x_i, \dots, \x_k) \ \Omega_{k}^{(r-k+i)}(\x_i, \dots,\x_j)
} \\ 
& \ \frac{
\sigma_{(n-1)(k-i)}^{(r-k+i+1)}(\x_i, \dots, \x_k) \  \Omega_{k-1}^{(r-k+i+1)}(\x_i, \dots, \x_j)
}{
\sigma_{(n-1)(k-i-1)}^{(r-k+i+1)}(\x_i, \dots, \x_{k-1}) \  \Omega_{k}^{(r-k+i+1)}(\x_i, \dots, \x_j)
}
\frac{
\sigma_{(n-1)(k-i-1)}^{(r-k+i)}(\x_i, \dots, \x_{k-1}) \  \Omega_{k}^{(r-k+i)}(\x_i, \dots, \x_j)
}{
\sigma_{(n-1)(k-i)}^{(r-k+i)}(\x_i, \dots, \x_k) \  \Omega_{k-1}^{(r-k+i)}(\x_i, \dots, \x_j)
} \\
& = x_j^{(r+j-k+1)}
\frac{
\Omega_{k-1}^{(r-k+i+1)}(\x_i, \dots, \x_j)\sigma_{(n-1)(k-i-1)}^{(r-k+i)}(\x_i, \dots, \x_{k-1}) 
}{
\sigma_{(n-1)(k-i-1)}^{(r-k+i+1)}(\x_i, \dots, \x_{k-1})
\Omega_{k-1}^{(r-k+i)}(\x_i, \dots, \x_j)
}
\end{align*}
as desired.
\end{proof}
\begin{proof}[Proof of \thmref{thm:trans_formula}] Let $s = s_k \dots s_{j-2} s_{j-1} s_{j-2} \dots s_i$.
\begin{align*}
    s_i \dots s_{j-2} s_{j-1} s_{j-2} \dots s_i(x_k^{(r)}) 
    & = s_{k-1}s(x_k^{(r)}) \\ 
    & = s(x_{k-1}^{(r-1)})\frac{
    \kappa_{r-1}(s(\x_{k-1}), s(\x_{k}))
    }{
    \kappa_{r}(s(\x_{k-1}), s(\x_{k}))
    } \\ 
    & = s_{j-1} s_{j-2}\dots s_i(x_{k-1}^{(r-1)})\frac{
    \kappa_{r-1}(s(\x_{k-1}), s(\x_{k}))
    }{
    \kappa_{r}(s(\x_{k-1}), s(\x_{k}))
    }.
\end{align*}
Plugging in the formulas from \thmref{lem:other_vars_cyc_shift} and \thmref{thm:trans_kappa} yields the desired result.
\end{proof}

\section{Proof of Lemma~\ref{thm:identity}}
\label{sec:transProof}

We will need a series of technical lemmas to prove \lemref{thm:identity}.

To begin, we define a family of functions \[P_k^{(r)}(\x_i, \dots, \x_{j}) :=
\sum_{t = 0}^{k}
\prod_{s = 0}^{t-1} x_i^{(r-s)} \sigma_{(n-1)(j-i-1)}^{(r-t)}(\x_i, \dots, \x_{j-1})
\prod_{s = 0}^{k-t-1} x_{j}^{(r-t+j-i-1-s)}.
\] Notice that $P_k^{(r)}(\x_i, \dots, \x_{j})$ is the sum of all terms in $\sigma_{(n-1)(j-i-1)+k}^{(r)}(\x_i, \dots, \x_{j})$ that contain at most $k$ $\x_j$ variables.

We will also need the expressions $T_s$ for $0\leq s\leq n-1$, where \[T_s: = \prod_{t=r+s+1}^{r+n-1}x_{j}^{(t+1)} 
\prod_{t = s+2}^{s+n-1}
\sigma_{(n-1)(j-i)}^{(r-j+i+t)}(\x_i, \dots, \x_{j}) \sigma_{(n-1)(j-i-1)}^{(r-j+i+s+1)}(\x_i, \dots, \x_{j-1})
\prod_{t = 1}^{s}x_i^{(r-j+i+t)},\] 
and the sums
$S^k := \sum_{s=0}^{k}T_s$ and $S_k := \sum_{s = k}^{n-1}T_s$. The following lemma provides a product formula for these partial sums.
\begin{lemma} \label{lem:identity}
For $0 \leq k < n-1$,
\begin{align*}
S^k =
\prod_{t = r+k+1}^{r+n-1}x_{j}^{(t+1)}\prod_{t = 1}^{k}\sigma_{(n-1)(j-i)}^{(r-j+i+t)}(\x_i, \dots, \x_{j}) 
P_k^{(r-j+i+k+1)}(\x_i, \dots, \x_{j})
\prod_{t = k+2}^{n-1}\sigma_{(n-1)(j-i)}^{(r-j+i+t)}(\x_i, \dots, \x_{j}).
\end{align*}
For $0 < k \leq n-1$,
\begin{align*}
S_k = \prod_{t = 1}^{k}x_{i}^{(r-j+i+t)}
\prod_{t = k+1}^{n-1}\sigma_{(n-1)(j-i)}^{(r-j+i+t)}(\x_i, \dots, \x_{j}) 
P_{n-k-1}^{(r-j+i)}(\x_i, \dots, \x_{j})
\prod_{t = 1}^{k-1}\sigma_{(n-1)(j-i)}^{(r-j+i+t)}(\x_i, \dots, \x_{j}).
\end{align*}
And lastly,
\begin{align*}
    S^{n-1} = S_0 = \prod_{t = 1}^{n-1}\sigma_{(n-1)(j-i)}^{(r-j+i+t)}(\x_i, \dots, \x_{j}).
\end{align*}
\end{lemma}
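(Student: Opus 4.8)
The plan is to prove the three displayed formulas by two parallel inductions: the formula for $S^k$ by upward induction on $k$ from $k=0$, the formula for $S_k$ by downward induction from $k=n-1$, and the identity $S^{n-1}=S_0$ (together with its closed form) will drop out of one extra step of the first induction, since both sides are literally $\sum_{s=0}^{n-1}T_s$ and only the product form needs proof. I will treat only $S^k$; the argument for $S_k$ is entirely dual.

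First I would record two recursions for the auxiliary functions $P_k$, each obtained by peeling off the extreme summand ($t=k$ or $t=0$) of the defining sum and reindexing:
\begin{align*}
P_k^{(r)}(\x_i,\dots,\x_j)&=x_i^{(r)}P_{k-1}^{(r-1)}(\x_i,\dots,\x_j)+\sigma_{(n-1)(j-i-1)}^{(r)}(\x_i,\dots,\x_{j-1})\prod_{s=0}^{k-1}x_j^{(r+j-i-1-s)},\\
P_k^{(r)}(\x_i,\dots,\x_j)&=x_j^{(r+j-i-k)}P_{k-1}^{(r)}(\x_i,\dots,\x_j)+\Bigl(\prod_{s=0}^{k-1}x_i^{(r-s)}\Bigr)\sigma_{(n-1)(j-i-1)}^{(r-k)}(\x_i,\dots,\x_{j-1}),
\end{align*}
together with the boundary values $P_0^{(r)}(\x_i,\dots,\x_j)=\sigma_{(n-1)(j-i-1)}^{(r)}(\x_i,\dots,\x_{j-1})$ (immediate from the definition) and $P_{n-1}^{(r)}(\x_i,\dots,\x_j)=\sigma_{(n-1)(j-i)}^{(r)}(\x_i,\dots,\x_j)$ (since a non-leading argument of a $\sigma$ function occurs at most $n-1$ times, the constraint ``at most $n-1$ copies of $\x_j$'' defining $P_{n-1}$ is vacuous, by the remark following the definition of $P_k$).

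For the induction on $S^k$, the base case $k=0$ is the tautology $S^0=T_0$ after substituting $P_0$. For the step I would write $S^k=S^{k-1}+T_k$, insert the inductive formula for $S^{k-1}$ and the definition of $T_k$, and reduce all upper indices modulo $n$: the ``windowed'' product $\prod_{t=s+2}^{s+n-1}\sigma_{(n-1)(j-i)}^{(r-j+i+t)}$ equals the product of $\sigma_{(n-1)(j-i)}^{(r-j+i+a)}$ over all residues $a\neq s,s+1$. Cancelling the common factor $\prod_{t=r+k+1}^{r+n-1}x_j^{(t+1)}$ and the common product of $\sigma_{(n-1)(j-i)}$ over all residues outside $\{0,k,k+1\}$ reduces the claim to the single identity (all $\sigma$ and $P$ below are on $(\x_i,\dots,\x_j)$ unless stated otherwise)
\[
x_j^{(r+k+1)}\sigma_{(n-1)(j-i)}^{(r-j+i+k+1)}P_{k-1}^{(r-j+i+k)}+\sigma_{(n-1)(j-i)}^{(r-j+i)}\sigma_{(n-1)(j-i-1)}^{(r-j+i+k+1)}(\x_i,\dots,\x_{j-1})\prod_{t=1}^{k}x_i^{(r-j+i+t)}=\sigma_{(n-1)(j-i)}^{(r-j+i+k)}P_{k}^{(r-j+i+k+1)}.
\]
Applying the first $P_k$-recursion to $P_k^{(r-j+i+k+1)}$ cancels the $P_{k-1}^{(r-j+i+k)}$ contributions against part of the left side, leaving an identity purely among $\sigma$-functions and monomials, of the shape $[\text{two }\sigma\text{-terms on }\x_i,\dots,\x_j]\cdot P_{k-1}^{(r-j+i+k)}=\sigma_{(n-1)(j-i-1)}^{(r-j+i+k+1)}(\x_i,\dots,\x_{j-1})\cdot[\text{two }\sigma\text{-terms on }\x_i,\dots,\x_j]$, which I would prove by expanding every $\sigma_{(n-1)(j-i)}(\x_i,\dots,\x_j)$ via \lemref{lem:sigmaidentity} in terms of the $\sigma_{(n-1)(j-i-1)}(\x_i,\dots,\x_{j-1})$ and matching monomials, if necessary with an auxiliary induction on $j-i$. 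For $S^{n-1}=S_0$ one runs the same reduction with $k=n-1$: the $\x_j$-product becomes empty, and using the second $P_k$-recursion at $k=n-1$ followed by $P_{n-1}^{(r)}=\sigma_{(n-1)(j-i)}^{(r)}(\x_i,\dots,\x_j)$, the right side collapses to $\prod_{t=1}^{n-1}\sigma_{(n-1)(j-i)}^{(r-j+i+t)}(\x_i,\dots,\x_j)$, as claimed.

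The main obstacle is the $\sigma$-and-monomial identity produced at the end of the inductive step: it is a genuine identity rather than a formal cancellation, it mixes $\sigma$-functions on $(\x_i,\dots,\x_j)$ with those on $(\x_i,\dots,\x_{j-1})$, and the most reliable route to it appears to be a careful application of \lemref{lem:sigmaidentity} (or a short induction on $j-i$) rather than brute-force expansion. A secondary, purely organizational difficulty is keeping track, under reduction of upper indices modulo $n$, of exactly which residues survive in each windowed $\sigma$-product, so that the three products entering $S^{k-1}$, $T_k$, and $S^k$ really do differ only by the single factors $\sigma^{(r-j+i)}$, $\sigma^{(r-j+i+k)}$, $\sigma^{(r-j+i+k+1)}$ needed for the cancellation.
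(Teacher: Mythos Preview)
Your overall architecture matches the paper exactly: upward induction for $S^k$, downward induction for $S_k$, the same two $P_k$-recursions, the boundary values $P_0=\sigma_{(n-1)(j-i-1)}$ and $P_{n-1}=\sigma_{(n-1)(j-i)}$, and the same reduction (after cancelling the common $x_j$-monomial and the common $\sigma$-factors) to the single identity
\[
\sigma_{(n-1)(j-i)}^{(r-j+i+k)}\,P_{k}^{(r-j+i+k+1)}
=
x_j^{(r+k+1)}\sigma_{(n-1)(j-i)}^{(r-j+i+k+1)}P_{k-1}^{(r-j+i+k)}
+
\sigma_{(n-1)(j-i)}^{(r-j+i)}\sigma_{(n-1)(j-i-1)}^{(r-j+i+k+1)}\prod_{t=1}^{k}x_i^{(r-j+i+t)}.
\]
The gap is precisely in how you finish this reduced identity. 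After your one application of the first $P$-recursion the identity still contains $P_{k-1}$ (your description ``purely among $\sigma$-functions and monomials'' contradicts the shape you then write down), and the plan ``expand every $\sigma_{(n-1)(j-i)}$ via \lemref{lem:sigmaidentity}, if necessary induct on $j-i$'' is not a proof: it replaces one quadratic identity by a large bilinear sum in the $\sigma_{(n-1)(j-i-1)}$'s with no mechanism given for matching terms, and an induction on $j-i$ has no evident hook here.

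The paper does \emph{not} use \lemref{lem:sigmaidentity} at this step at all. It stays entirely inside the $P_k$-calculus: it expands the single factor $\sigma_{(n-1)(j-i)}^{(r-j+i+k)}$ on the left via $\sigma_{(n-1)(j-i)}=P_{n-1}$ and iterated $P$-recursions into a three-term sum (split at level $k-1$), simultaneously expands $P_k^{(r-j+i+k+1)}$ by the first recursion, multiplies out to four concrete terms, and then observes that two of the four regroup to each summand on the right. In other words, the missing ingredient is one more targeted use of the same $P$-recursions you already recorded (applied to $\sigma_{(n-1)(j-i)}=P_{n-1}$), not an appeal to \lemref{lem:sigmaidentity} or a new induction.
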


\begin{example}
When $r = n = 4$, $j = 3$, $i = 1$,
\begin{align*}
    T_0 &= \x_3^{(2)}\x_3^{(3)}\x_3^{(4)}\sigma_3^{(3)}(\x_1,\x_2)\sigma_{6}^{(4)}(\x_1,\x_2,\x_3)\sigma_{6}^{(1)}(\x_1,\x_2,\x_3), \\ 
    T_1 &= \x_1^{(3)}\x_3^{(3)}\x_3^{(4)}\sigma_{6}^{(2)}(\x_1,\x_2,\x_3)\sigma_3^{(4)}(\x_1,\x_2)\sigma_{6}^{(1)}(\x_1,\x_2,\x_3), \\ 
    T_2 &= \x_1^{(4)}\x_1^{(3)}\x_3^{(4)}\sigma_{6}^{(2)}(\x_1,\x_2,\x_3)\sigma_{6}^{(3)}(\x_1,\x_2,\x_3)\sigma_3^{(1)}(\x_1,\x_2) \label{eq:idex_pt3}, \\ 
    T_3 &= \x_1^{(1)}\x_1^{(4)}\x_1^{(3)}\sigma_3^{(2)}(\x_1,\x_2) \sigma_{6}^{(3)}(\x_1,\x_2,\x_3)\sigma_{6}^{(4)}(\x_1,\x_2,\x_3).
\end{align*}
The lemma says that 
\begin{align*}
    T_0 &= \x_3^{(2)}\x_3^{(3)}\x_3^{(4)}P_0^{(3)}(\x_1,\x_2,\x_3)\sigma_{6}^{(4)}(\x_1,\x_2,\x_3)\sigma_{6}^{(1)}(\x_1,\x_2,\x_3), \\ 
    T_0 + T_1 &= \x_3^{(3)}\x_3^{(4)}\sigma_6^{(3)}(\x_1,\x_2,\x_3)P_1^{(4)}(\x_1,\x_2,\x_3)\sigma_6^{(1)}(\x_1,\x_2,\x_3), 
    \\ T_0+T_1+T_2 &= \x_3^{(4)}\sigma_6^{(3)}(\x_1,\x_2,\x_3)\sigma_6^{(4)}(\x_1,\x_2,\x_3)P_2^{(1)}(\x_1,\x_2,\x_3);
\end{align*}
\begin{align*}
    T_3 &= \x_1^{(1)}\x_1^{(4)}\x_1^{(3)}P_0^{(2)}(\x_1,\x_2,\x_3) \sigma_{6}^{(3)}(\x_1,\x_2,\x_3)\sigma_{6}^{(4)}(\x_1,\x_2,\x_3), \\ 
    T_2 + T_3 &= \x_1^{(4)}\x_1^{(3)}\sigma_6^{(1)}(\x_1,\x_2,\x_3)P_1^{(2)}(\x_1,\x_2,\x_3)\sigma_6^{(3)}(\x_1,\x_2,\x_3),
    \\ T_1+T_2+T_3 &= \x_1^{(3)}\sigma_6^{(4)}(\x_1,\x_2,\x_3)\sigma_6^{(1)}(\x_1,\x_2,\x_3)P_2^{(2)}(\x_1,\x_2,\x_3);
\end{align*}
and 
\[T_0+T_1+T_2+T_3 = \sigma_6^{(3)}(\x_1,\x_2,\x_3)\sigma_6^{(4)}(\x_1,\x_2,\x_3)\sigma_6^{(1)}(\x_1,\x_2,\x_3).\]
\end{example}

\begin{proof}
We prove the first claim first. We proceed by induction on $k$. When $k = 0$, the equality is by definition. Suppose that the claim is true for some $k$ such that $0 \leq k < n-2$. Then it suffices to prove that the proposed formula for $S^{k+1}$ satisfies
\[S^{k+1}= S^k + T_{k+1} .\] In other words, we need to show that
\begin{align*}
& \prod_{t = r+k+2}^{r+n-1}x_{j}^{(t+1)}\prod_{t = 1}^{k+1}\sigma_{(n-1)(j-i)}^{(r-j+i+t)}(\x_i, \dots, \x_{j}) 
P_{k+1}^{(r-j+i+k+2)}(\x_i, \dots, \x_{j})
\prod_{t = k+3}^{n-1}\sigma_{(n-1)(j-i)}^{(r-j+i+t)}(\x_i, \dots, \x_{j}) \\
= \ & \prod_{t = r+k+1}^{r+n-1}x_{j}^{(t+1)}\prod_{t = 1}^{k}\sigma_{(n-1)(j-i)}^{(r-j+i+t)}(\x_i, \dots, \x_{j}) 
P_k^{(r-j+i+k+1)}(\x_i, \dots, \x_{j})
\prod_{t = k+2}^{n-1}\sigma_{(n-1)(j-i)}^{(r-j+i+t)}(\x_i, \dots, \x_{j}) \\ 
+ &
\prod_{t=r+k+2}^{r+n-1}x_{j}^{(t+1)} 
\prod_{t = k+3}^{k+n} \sigma_{(n-1)(j-i)}^{(r-j+i+t)}(\x_i, \dots, \x_{j}) \sigma_{(n-1)(j-i-1)}^{(r-j+i+k+2)}(\x_i, \dots, \x_{j-1})
\prod_{t = 1}^{k+1}x_i^{(r-j+i+t)}. 
\end{align*}
We may factor out 
\[
\prod_{t = r+k+2}^{r+n-1}x_{j}^{(t+1)} \prod_{t=k+3}^{n-1}\sigma_{(n-1)(j-i)}^{(r-j+i+t)}(\x_i, \dots, \x_{j})
\prod_{t=1}^{k}\sigma_{(n-1)(j-i)}^{(r-j+i+t)}(\x_i, \dots, \x_{j})
\]
so that suffices to prove
\begin{align*}
& \sigma_{(n-1)(j-i)}^{(r-j+i+k+1)}(\x_i, \dots, \x_{j}) 
P_{k+1}^{(r-j+i+k+2)}(\x_i, \dots, \x_{j}) \\
= \ & x_{j}^{(r+k+2)}
P_k^{(r-j+i+k+1)}(\x_i, \dots, \x_{j}) \sigma_{(n-1)(j-i)}^{(r-j+i+k+2)}(\x_i, \dots, \x_{j}) \\ 
+ \ &
\sigma_{(n-1)(j-i)}^{(r-j+i)}(\x_i, \dots, \x_{j}) \sigma_{(n-1)(j-i-1)}^{(r-j+i+k+2)}(\x_i, \dots, \x_{j-1})
\prod_{t = 1}^{k+1}x_i^{(r-j+i+t)} .
\end{align*}

Note that we have the following identities:
\begin{align*}
\sigma_{(n-1)(j-i)}^{(r-j+i+k+1)}(\x_i, \dots, \x_{j}) 
& = P_{n-2}^{(r-j+i+k+1)}(\x_i, \dots, \x_j)x_j^{(r+k+2)} + 
\prod_{t = 0}^{n-2} x_i^{(r-j+i+k+1-t)} \sigma_{(n-1)(j-i-1)}^{(r-j+i+k+2)}(\x_i, \dots, \x_{j-1})
\\ 
& = P_k^{(r-j+i+k+1)}(\x_i, \dots, \x_j)\prod_{t = 0}^{n-2-k} x_{j}^{(r-t)} 
+ \prod_{t = 0}^{k} x_i^{(r-j+i+k+1-t)} P_{n-k-3}^{(r-j+i)} x_j^{(r+k+2)} \\
& \ + \prod_{t = 0}^{n-2} x_i^{(r-j+i+k+1-t)} \sigma_{(n-1)(j-i-1)}^{(r-j+i+k+2)}(\x_i, \dots, \x_{j-1})
\end{align*}
and 
\[P_{k+1}^{(r-j+i+k+2)} = x_i^{(r-j+i+k+2)}P_k^{(r-j+i+k+1)}(\x_i, \dots, \x_j) + \sigma_{(n-1)(j-i-1)}^{(r-j+i+k+2)}(\x_i, \dots, \x_{j-1}) 
\prod_{t = 0}^{k}x_{j}^{(r+k+1-t)}.\]

Using these two identities, we can rewrite the left hand side of the desired equality as the following sum:  \begin{align*}
& \sigma_{(n-1)(j-i)}^{(r-j+i+k+1)}(\x_i, \dots, \x_{j}) 
P_{k+1}^{(r-j+i+k+2)}(\x_i, \dots, \x_{j}) \\ 
    = \ & \prod_{t = 0}^{n-2} x_i^{(r-j+i+k+1-t)} \sigma_{(n-1)(j-i-1)}^{(r-j+i+k+2)}(\x_i, \dots, \x_{j-1})P_{k+1}^{(r-j+i+k+2)}(\x_i, \dots, \x_j) \\ + \ & 
\prod_{t = 0}^{k} x_i^{(r-j+i+k+1-t)} P_{n-k-3}^{(r-j+i)}(\x_i, \dots, \x_j) x_j^{(r+k+2)}
\sigma_{(n-1)(j-i-1)}^{(r-j+i+k+2)}(\x_i, \dots, \x_{j-1}) 
\prod_{t = 0}^{k}x_{j}^{(r+k+1-t)} \\
+ \ & P_k^{(r-j+i+k+1)}(\x_i, \dots, \x_j)\prod_{t = 0}^{n-2-k} x_{j}^{(r-t)} \sigma_{(n-1)(j-i-1)}^{(r-j+i+k+2)}(\x_i, \dots, \x_{j-1}) 
\prod_{t = 0}^{k}x_{j}^{(r+k+1-t)} \\ 
+ \ & P_{n-2}^{(r-j+i+k+1)}(\x_i, \dots, \x_j)x_j^{(r+k+2)}
x_i^{(r-j+i+k+2)}P_k^{(r-j+i+k+1)}(\x_i, \dots, \x_j). 
\end{align*}

One can check that the first two terms evaluate to \[\sigma_{(n-1)(j-i)}^{(r-j+i)}(\x_i, \dots, \x_{j}) \sigma_{(n-1)(j-i-1)}^{(r-j+i+k+2)}(\x_i, \dots, \x_{j-1})
\prod_{t = 1}^{k+1}x_i^{(r-j+i+t)}\] and the last two terms evaluate to \[x_j^{(r+k+2)}P_k^{(r-j+i+k+1)}(\x_i, \dots, \x_j)\sigma_{(n-1)(j-i)}^{(r-j+i+k+2)}(\x_i, \dots, \x_j),\] which shows the desired identity.

To prove the identity for $S_k$, we again proceed by induction on $k$. If $k = n-1$ the identity is true. Now suppose that the claim is true for some $k$ such that $1 < k \leq n-1$. We will show that the proposed formula for $S_{k-1}$ satisfies
\[S_{k-1} = T_{k-1} + S_k.\]

After cancelling common factors, the above is equivalent to
\begin{align*}
& P_{n-k}^{(r-j+i)}(\x_i, \dots, \x_j)\sigma_{(n-1)(j-i)}^{(r-j+i+k)}(\x_i, \dots, \x_j) \\ 
= \ &  \sigma_{(n-1)(j-i)}^{(r-j+i)}(\x_i, \dots, \x_j)\sigma_{(n-1)(j-i-1)}^{(r-j+i+k)}(\x_i, \dots, \x_{j-1})\prod_{t = k}^{n-1}x_{j}^{(r+t+1)}\\
+ \ & \sigma^{(r-j+i+k-1)}_{(n-1)(j-i)}(\x_i, \dots, \x_j)P_{n-k-1}^{(r-j+i)}(\x_i, \dots, \x_j)x_i^{(r-j+i+k)}.
\end{align*}
As before, we can expand the two factors on the left hand side.
\[
    P_{n-k}^{(r-j+i)}(\x_i, \dots, \x_j) 
    = 
    P_{n-k-1}^{(r-j+i)}(\x_i, \dots, \x_j)
    x_j^{(r+k)} 
    + 
    \prod_{t = 0}^{n-k-1}x_i^{(r-j+i-t)}
    \sigma_{(n-1)(j-i-1)}^{(r-j+i+k)}(\x_i, \dots, \x_{j-1})
\]
and
\begin{align*}
    \sigma_{(n-1)(j-i)}^{(r-j+i+k)}(\x_i, \dots, \x_j) = \ &
    x_i^{(r-j+i+k)}P_{n-2}^{(r-j+i+k-1)}(\x_i, \dots, \x_j) + \sigma_{(n-1)(j-i-1)}^{(r-j+i+k)}(\x_i, \dots, \x_{j-1})
    \prod_{t = 1}^{n-1}x_{j}^{(r+k-t)}
    \\
    = \ & \prod_{t = 0}^{k-1} x_i^{(r-j+i+k-t)}
    P_{n-k-1}^{(r-j+i)}(\x_i, \dots, \x_j)
    + 
    x_i^{(r-j+i+k)}
    P_{k-2}^{(r-j+i+k-1)}(\x_i, \dots, \x_j)
    \prod_{t = k}^{n-1}x_{j}^{(r+t+1)} \\
    + \ & \sigma_{(n-1)(j-i-1)}^{(r-j+i+k)}(\x_i, \dots, \x_{j-1})
    \prod_{t = 1}^{n-1}x_{j}^{(r+k-t)}.
\end{align*}
Thus, we can rewrite the left hand side into a sum of four terms, from which the desired equality follows.
\begin{align*}
    & P_{n-k}^{(r-j+i)}(\x_i, \dots, \x_j)\sigma_{(n-1)(j-i)}^{(r-j+i+k)}(\x_i, \dots, \x_j) \\ 
    = \ & P_{n-k}^{(r-j+i)}(\x_i, \dots, \x_j) \sigma_{(n-1)(j-i-1)}^{(r-j+i+k)}(\x_i, \dots, \x_{j-1})
    \prod_{t = 1}^{n-1}x_{j}^{(r+k-t)} \\
    + \ &
    \prod_{t = 0}^{n-k-1}x_i^{(r-j+i-t)}
    \sigma_{(n-1)(j-i-1)}^{(r-j+i+k)}(\x_i, \dots, \x_{j-1})
    x_i^{(r-j+i+k)}
    P_{k-2}^{(r-j+i+k-1)}(\x_i, \dots, \x_j)
    \prod_{t = k}^{n-1}x_{j}^{(r+t+1)} \\ 
    + \ & P_{n-k}^{(r-j+i)}(\x_i, \dots, \x_j) \prod_{t = 0}^{k-1} x_i^{(r-j+i+k-t)}
    P_{n-k-1}^{(r-j+i)}(\x_i, \dots, \x_j) \\ 
    + \ & P_{n-k-1}^{(r-j+i)}(\x_i, \dots, \x_j)
    x_j^{(r+k)} 
    x_i^{(r-j+i+k)}
    P_{k-2}^{(r-j+i+k-1)}(\x_i, \dots, \x_j)
    \prod_{t = k}^{n-1}x_{j}^{(r+t+1)} \\ 
    = \ & \sigma_{(n-1)(j-i)}^{(r-j+i)}(\x_i, \dots, \x_j)
    \sigma_{(n-1)(j-i-1)}^{(r-j+i+k)}(\x_i, \dots, \x_{j-1}) \prod_{t = k}^{n-1}x_{j}^{(r+t+1)} \\ 
    + \ & \sigma_{(n-1)(j-i)}^{(r-j+i+k-1)}(\x_i, \dots, \x_j) P_{n-k-1}^{(r-j+i)}(\x_i, \dots, \x_j) x_i^{(r-j+i+k)}.
\end{align*}

Lastly, we will show that \begin{align*}
    S^{n-1} = S_0 = \prod_{t = 1}^{n-1}\sigma_{(n-1)(j-i)}^{(r-j+i+t)}(\x_i, \dots, \x_{j}).
\end{align*}
It suffices to show that this product formula satisfies the identity \[S^{n-2} + T_{n-1} = S^{n-1},\] namely 
\begin{align*}
&
\prod_{t = r+n-1}^{r+n-1}x_{j}^{(t+1)}\prod_{t = 1}^{n-2}\sigma_{(n-1)(j-i)}^{(r-j+i+t)}(\x_i, \dots, \x_{j}) 
P_{n-2}^{(r-j+i+n-1)}(\x_i, \dots, \x_{j}) \\ 
+ \ & 
\prod_{t = 1}^{n-2}
\sigma_{(n-1)(j-i)}^{(r-j+i+t)}(\x_i, \dots, \x_{j}) \sigma_{(n-1)(j-i-1)}^{(r-j+i)}(\x_i, \dots, \x_{j-1})
\prod_{t = 1}^{n-1}x_i^{(r-j+i+t)} \\ 
= \ & \prod_{t = 1}^{n-1}\sigma_{(n-1)(j-i)}^{(r-j+i+t)}(\x_i, \dots, \x_{j}).
\end{align*}
This is true because
\begin{align*}
    x_{j}^{(r)}P_{n-2}^{(r-j+i+n-1)}(\x_i, \dots, \x_{j}) + \sigma_{(n-1)(j-i-1)}^{(r-j+i)}(\x_i, \dots, \x_{j-1})
\prod_{t = 1}^{n-1}x_i^{(r-j+i+t)} = \sigma_{(n-1)(j-i)}^{(r-j+i+n-1)}(\x_i, \dots, \x_{j}).
\end{align*}
\end{proof}
The following lemma evaluates two sums that we shall need in the proof of \lemref{thm:identity}. The proof relies heavily on \lemref{lem:identity}.
\begin{lemma} \label{lem:lhs=rhs} Let $0 \leq q \leq (n-1)(j-k+1)$ and $\gamma = 2(n-1)-(m-q)$. Then when $0 \leq m-q \leq n-1$,
\begin{align*}
    & \left [ \prod_{t = 1}^{n-1}\sigma_{(n-1)(k-i)}^{(r-k+i+t)}(\x_i, \dots, \x_k) \right ] P_{m-q}^{(r-k+i)}(\x_i, \dots, \x_k) \\ 
    = \ & 
    \sigma_{(n-1)(k-i)}^{(r-k+i-(m-q+1))}(\x_i, \dots, \x_k)
\sum_{s = (n-1)-(m-q)}^{n-1}
\prod_{t=r+s+1}^{r+n-1}x_{k}^{(t+1)} 
 \prod_{t = s+2}^{s+n-1} \sigma_{(n-1)(k-i)}^{(r-k+i+t)}(\x_i, \dots, \x_k) \\ &
    \prod_{\alpha = 0}^{s+m-q-n}x_i^{(r-k+i+s-\alpha)} 
 \sigma_{(n-1)(k-i-1)}^{(r-k+i+s+1)}(\x_i, \dots, \x_{k-1});
\end{align*}
and when $n-1\leq m-q \leq 2(n-1)$,
\begin{align*}
    & \left [ \prod_{t = 1}^{n-1}\sigma_{(n-1)(k-i)}^{(r-k+i+t)}(\x_i, \dots, \x_k) \prod_{\alpha =0}^{m-q-n}
x_i^{(r-k+i-\alpha)}
\prod_{\beta = n}^{m-q}
x_k^{(r-\beta)} \right ] P_{\gamma}^{(r-k+i-\gamma+1)}(\x_i, \dots, \x_k) \\ 
    = \ & 
    \sigma_{(n-1)(k-i)}^{(r-k+i-(m-q+1))}(\x_i, \dots, \x_k)
\sum_{s = 0}^{2(n-1)-(m-q)}
\prod_{t=r+s+1}^{r+n-1}x_{k}^{(t+1)} 
 \prod_{t = s+2}^{s+n-1} \sigma_{(n-1)(k-i)}^{(r-k+i+t)}(\x_i, \dots, \x_k) \\ &
    \prod_{\alpha = 0}^{s+m-q-n}x_i^{(r-k+i+s-\alpha)} 
 \sigma_{(n-1)(k-i-1)}^{(r-k+i+s+1)}(\x_i, \dots, \x_{k-1}),
\end{align*}
\end{lemma}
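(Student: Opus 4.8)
The plan is to recognize the sum on the right of each identity as one of the partial sums of \lemref{lem:identity}, applied with the block $\x_i,\dots,\x_j$ of that lemma replaced throughout by $\x_i,\dots,\x_k$ and with the base index kept equal to $r$. Write $\widetilde T_s$, $\widetilde S^p$, $\widetilde S_p$ for the quantities $T_s$, $S^p$, $S_p$ obtained from this substitution, so that \lemref{lem:identity} supplies closed product formulas for $\widetilde S^p$ ($0\le p<n-1$) and $\widetilde S_p$ ($0<p\le n-1$), as well as $\widetilde S^{n-1}=\widetilde S_0=\prod_{t=1}^{n-1}\sigma_{(n-1)(k-i)}^{(r-k+i+t)}(\x_i,\dots,\x_k)$. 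The summand in \lemref{lem:lhs=rhs} is exactly $\widetilde T_s$ apart from its $x_i$-monomial: the summand carries $\prod_{\alpha=0}^{s+(m-q)-n}x_i^{(r-k+i+s-\alpha)}$, whereas $\widetilde T_s$ carries $\prod_{t=1}^{s}x_i^{(r-k+i+t)}$. Re-indexing by $t=s-\alpha$, these two monomials differ by a factor that does not depend on $s$: when $0\le m-q\le n-1$ the summand's monomial is the sub-product of $\widetilde T_s$'s obtained by deleting $\prod_{t=1}^{(n-1)-(m-q)}x_i^{(r-k+i+t)}$, and when $n-1\le m-q\le 2(n-1)$ it is $\widetilde T_s$'s monomial times $\prod_{\alpha=0}^{(m-q)-n}x_i^{(r-k+i-\alpha)}$. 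Pulling this $s$-independent factor out of the sum reduces the right side of the first identity to $\sigma_{(n-1)(k-i)}^{(r-k+i-(m-q+1))}(\x_i,\dots,\x_k)\,\widetilde S_{(n-1)-(m-q)}$ divided by $\prod_{t=1}^{(n-1)-(m-q)}x_i^{(r-k+i+t)}$, and the right side of the second to $\sigma_{(n-1)(k-i)}^{(r-k+i-(m-q+1))}(\x_i,\dots,\x_k)\,\bigl(\prod_{\alpha=0}^{(m-q)-n}x_i^{(r-k+i-\alpha)}\bigr)\,\widetilde S^{2(n-1)-(m-q)}$.

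For the first identity I then plug in the product formula for $\widetilde S_p$ with $p=(n-1)-(m-q)$. Its $P$-factor is $P_{n-p-1}^{(r-k+i)}=P_{m-q}^{(r-k+i)}$, which matches the $P_{m-q}^{(r-k+i)}$ on the left with no superscript shift; its $x_i$-monomial $\prod_{t=1}^{p}x_i^{(r-k+i+t)}$ cancels the factor divided out above; and its two $\sigma$-products $\prod_{t=p+1}^{n-1}$ and $\prod_{t=1}^{p-1}$ of $\sigma_{(n-1)(k-i)}^{(r-k+i+t)}(\x_i,\dots,\x_k)$ combine with the front factor $\sigma_{(n-1)(k-i)}^{(r-k+i-(m-q+1))}(\x_i,\dots,\x_k)$ into the full product $\prod_{t=1}^{n-1}\sigma_{(n-1)(k-i)}^{(r-k+i+t)}(\x_i,\dots,\x_k)$, since $r-k+i-(m-q+1)\equiv r-k+i+p\pmod n$ is exactly the index omitted from the two partial $\sigma$-products. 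That is the left side. The endpoint $m-q=n-1$, where $p=0$ and one must use $\widetilde S^{n-1}=\prod_{t=1}^{n-1}\sigma$ rather than the $\widetilde S_p$ formula (and where $P_{n-1}^{(r-k+i)}=\sigma_{(n-1)(k-i)}^{(r-k+i)}$), is checked directly.

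The second identity runs along the same lines, using the product formula for $\widetilde S^{\gamma}$ with $\gamma=2(n-1)-(m-q)$: its $x_k$-monomial $\prod_{t=r+\gamma+1}^{r+n-1}x_k^{(t+1)}$ has the same multiset of superscripts modulo $n$ as $\prod_{\beta=n}^{m-q}x_k^{(r-\beta)}$ on the left, its two $\sigma$-products again combine with the front factor into $\prod_{t=1}^{n-1}\sigma_{(n-1)(k-i)}^{(r-k+i+t)}(\x_i,\dots,\x_k)$ (now via $r-k+i-(m-q+1)\equiv r-k+i+\gamma+1\pmod n$), and the $x_i$-monomials cancel. I expect the main obstacle to be the $P$-factor: $\widetilde S^{\gamma}$ produces $P_\gamma^{(r-k+i+\gamma+1)}(\x_i,\dots,\x_k)$, while the left side carries $P_\gamma^{(r-k+i-\gamma+1)}(\x_i,\dots,\x_k)$, and the two superscripts differ by $2\gamma$, which need not be a multiple of $n$. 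To reconcile them I would not cite \lemref{lem:identity} as a black box but re-run its inductive step in this shifted setting, expanding the relevant $\sigma_{(n-1)(k-i)}$ and $P_\ell$ via the two recursions used there, namely $\sigma_{(n-1)(k-i)}^{(\cdot)}=x_k^{(\cdot)}P_{n-2}^{(\cdot)}+\bigl(\textstyle\prod x_i^{(\cdot)}\bigr)\sigma_{(n-1)(k-i-1)}^{(\cdot)}(\x_i,\dots,\x_{k-1})$ and $P_{\ell+1}^{(\cdot)}=x_i^{(\cdot)}P_\ell^{(\cdot)}+\sigma_{(n-1)(k-i-1)}^{(\cdot)}(\x_i,\dots,\x_{k-1})\bigl(\textstyle\prod x_k^{(\cdot)}\bigr)$, so as to match the two $P_\gamma$-superscripts directly together with the $x_k$-monomials flanking them. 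Granting that, the two sides agree; the endpoints $m-q=n-1$ ($\gamma=n-1$, $\widetilde S^{n-1}=\prod\sigma$) and $m-q=2(n-1)$ ($\gamma=0$, where the two $P$-superscripts coincide) are immediate.
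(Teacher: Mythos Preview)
Your approach is exactly the paper's: rewrite the $x_i$-monomial in the summand so that an $s$-independent factor pulls out, recognize what remains as the partial sum $S_p$ (first identity, with $p=(n-1)-(m-q)$) or $S^\gamma$ (second identity) of \lemref{lem:identity} applied to $\x_i,\dots,\x_k$, substitute the product formula, and watch the $\sigma$-products and the front $\sigma_{(n-1)(k-i)}^{(r-k+i-(m-q+1))}$ reassemble into $\prod_{t=1}^{n-1}\sigma_{(n-1)(k-i)}^{(r-k+i+t)}$. Your treatment of the first identity and of the $x_i$-, $x_k$-, and $\sigma$-factors in the second is correct and matches the paper line for line.

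The discrepancy you flagged in the $P$-factor is real, but it is a typo in the stated lemma, not an obstacle to the argument. The paper's own proof of \lemref{lem:lhs=rhs} concludes with $P_\gamma^{(r-k+i+\gamma+1)}(\x_i,\dots,\x_k)$, exactly as you obtained from $\widetilde S^\gamma$, and declares this ``as desired''; the superscript $r-k+i-\gamma+1$ in the statement (and in the corresponding line of the proof of \lemref{thm:identity}) should read $r-k+i+\gamma+1$. One can check directly that the sum in \eqref{eq:LHS_3}, after factoring out the $x_i$- and $x_k$-monomials in the second case, is $P_\gamma^{(R)}$ with $R=r-k+i-(m-q)+n-1\equiv r-k+i+\gamma+1\pmod n$, confirming the sign. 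So your proposed workaround of re-running the induction is unnecessary, and indeed could not succeed as stated, since the two superscripts $r-k+i\pm\gamma+1$ genuinely give different polynomials when $2\gamma\not\equiv 0\pmod n$. Simply correct the superscript and your argument is complete.
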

\begin{proof}
We begin with the first identity. Note that 
\begin{align*}
    \prod_{\alpha = 0}^{s+m-q-n}x_i^{(r-k+i+s-\alpha)} & = \prod_{\beta = (n-1)-(m-q)+1}^{s}x_i^{(r-k+i+\beta)} \\ & = \prod_{\beta = 1}^{s}x_i^{(r-k+i+\beta)} 
\left ( 
\prod_{\beta = 1}^{(n-1)-(m-q)}x_i^{(r-k+i+\beta)} 
\right )^{-1}.
\end{align*}
So we can rewrite the right hand side as:
\begin{align*}
    & \left ( 
\prod_{\beta = 1}^{(n-1)-(m-q)}x_i^{(r-k+i+\beta)} 
\right )^{-1} \sigma_{(n-1)(k-i)}^{(r-k+i-(m-q+1))}(\x_i, \dots, \x_k) \\ &
\sum_{s = (n-1)-(m-q)}^{n-1}
\prod_{t=r+s+1}^{r+n-1}x_{k}^{(t+1)} 
 \prod_{t = s+2}^{s+n-1} \sigma_{(n-1)(k-i)}^{(r-k+i+t)}(\x_i, \dots, \x_k)
 \sigma_{(n-1)(k-i-1)}^{(r-k+i+s+1)}(\x_i, \dots, \x_{k-1})
 \prod_{\beta = 1}^{s}x_i^{(r-k+i+\beta)}.
\end{align*}
Let $p = (n-1)-(m-q)$. Then by \lemref{lem:identity}, the sum itself is equal to
\[\prod_{\beta = 1}^{p}x_i^{(r-k+i+\beta)} \prod_{t = p+1}^{n-1}\sigma_{(n-1)(k-i)}^{(r-k+i+t)}(\x_i, \dots, \x_k)P_{n-1-p}^{(r-k+i)}(\x_i, \dots, \x_k)\prod_{t = 1}^{p-1}\sigma_{(n-1)(k-i)}^{(r-k+i+t)}(\x_i, \dots, \x_k).\] Substituting the sum, the right hand side is equal to 
\begin{align*}
    & \left ( 
\prod_{\beta = 1}^{p}x_i^{(r-k+i+\beta)} 
\right )^{-1} \sigma_{(n-1)(k-i)}^{(r-k+i+p)}(\x_i, \dots, \x_k) 
\prod_{\beta = 1}^{p}x_i^{(r-k+i+\beta)} \\ & \prod_{t = p+1}^{n-1}\sigma_{(n-1)(k-i)}^{(r-k+i+t)}(\x_i, \dots, \x_k)  P_{n-1-p}^{(r-k+i)}(\x_i, \dots, \x_k)\prod_{t = 1}^{p-1}\sigma_{(n-1)(k-i)}^{(r-k+i+t)}(\x_i, \dots, \x_k) \\ 
& = \prod_{t = 1}^{n-1}\sigma_{(n-1)(k-i)}^{(r-k+i+t)}(\x_i, \dots, \x_k) P_{m-q}^{(r-k+i)}(\x_i, \dots, \x_k)
\end{align*}
as desired.

For the second identity, similarly note that
\begin{align*}
    \prod_{\alpha = 0}^{s+m-q-n}x_i^{(r-k+i+s-\alpha)} & = \prod_{\alpha = 0}^{s-1}x_i^{(r-k+i+s-\alpha)} \prod_{\alpha = 0}^{m-q-n}x_i^{(r-k+i-\alpha)}\\ & = \prod_{\alpha = 1}^{s}x_i^{(r-k+i+\alpha)} \prod_{\alpha = 0}^{m-q-n}x_i^{(r-k+i-\alpha)}.
\end{align*}
So we can rewrite the right hand side as
\begin{align*}
    & \prod_{\alpha = 0}^{m-q-n}x_i^{(r-k+i-\alpha)} \sigma_{(n-1)(k-i)}^{(r-k+i-(m-q+1))}(\x_i, \dots, \x_k) \\ &
\sum_{s = 0}^{2(n-1)-(m-q)}
\prod_{t=r+s+1}^{r+n-1}x_{k}^{(t+1)} 
 \prod_{t = s+2}^{s+n-1} \sigma_{(n-1)(k-i)}^{(r-k+i+t)}(\x_i, \dots, \x_k)
 \sigma_{(n-1)(k-i-1)}^{(r-k+i+s+1)}(\x_i, \dots, \x_{k-1})
 \prod_{\alpha = 1}^{s}x_i^{(r-k+i+\alpha)}.
\end{align*}
By \lemref{lem:identity}, the sum is equal to 
\[
\prod_{t = r + \gamma + 1}^{r+n-1}x_k^{(t+1)}
\prod_{t = 1}^{\gamma}
\sigma_{(n-1)(k-i)}^{(r-k+i+t)}(\x_i, \dots, \x_k)
P_\gamma^{(r-k+i+\gamma+1)}(\x_i, \dots, \x_k)
\prod_{t = \gamma+2}^{n-1}
\sigma_{(n-1)(k-i)}^{(r-k+i+t)}(\x_i, \dots, \x_k),
\]
where we can rewrite the product of $\x_k$ variables as follows:
\[\prod_{t = r + \gamma + 1}^{r+n-1}x_k^{(t+1)}
= \prod_{\beta = 0}^{n-2-\gamma} x_k^{(r-\beta)} = \prod_{\beta = 0}^{m-q-n} x_k^{(r-\beta)} = \prod_{\beta = n}^{m-q} x_k^{(r-\beta)}.
\]
Thus, we get that the right hand side is equal to
\[
\prod_{t = 1}^{n-1}\sigma_{(n-1)(k-i)}^{(r-k+i+t)}(\x_i, \dots, \x_k) \prod_{\alpha = 0}^{m-q-n}x_i^{(r-k+i-\alpha)} 
\prod_{\beta = n}^{m-q} x_k^{(r-\beta)}
P_\gamma^{(r-k+i+\gamma+1)}(\x_i, \dots, \x_k)
\] as desired.
\end{proof}

We are now ready to prove Lemma~\ref{thm:identity}.
\begin{proof}[Proof of Lemma~\ref{thm:identity}.]
Notice that the only factor on the left hand side of this identity that contains $\x_j$ variables is the $\Omega$ function and the only factors on the right hand side that contain $\x_j$ variables is the $\Omega$ function and the product of $\x_j$ variables. We can subdivide terms of an $\Omega$ function according to the number of $\x_j$ variables contained in a term. In general,
\begin{align*}
    & \Omega^{(r)}_k(\x_i,\dots,\x_j) \\ = \ & 
    \sum_{\ell = 0}^{n-1}
    \sigma_{(n-1)(k-i)+\ell}^{(r)}(\x_i,\dots,\x_k)
    \bar{\sigma}_{(n-1)(j-k)-\ell}^{(r+k-i-\ell)}(\x_{k+1},\dots,\x_j) \\ 
    = \ & \sum_{\ell = 0}^{n-1}\sum_{q = n-1-\ell}^{(n-1)(j-k)-\ell}
    \prod_{\alpha = 0}^{\ell-1}x_i^{(r-\alpha)} 
    \sigma_{(n-1)(k-i)}^{(r-\ell)}(\x_i,\dots,\x_k)
    \tau^{(r+k-i-\ell)}_{(n-1)(j-k)-\ell-q}(\x_{k+1},\dots,\x_{j-1})\prod_{\beta = j+1}^{j+q}x_j^{(r+\beta-i)}.
\end{align*}
So we can rewrite the two $\Omega$ functions appearing in the identity as follows:
\begin{align*}
    \Omega_{k-1}^{(r-k+i)}(\x_i,\dots,\x_j) & = \sum_{\ell = 0}^{n-1}\sum_{q = n-1-\ell}^{(n-1)(j-k+1)-\ell}
    \prod_{\alpha = 0}^{\ell-1}x_i^{(r-k+i-\alpha)} 
    \sigma_{(n-1)(k-i-1)}^{(r-k+i-\ell)}(\x_i,\dots,\x_{k-1}) \\
    & \tau^{(r-\ell-1)}_{(n-1)(j-k+1)-\ell-q}(\x_{k},\dots,\x_{j-1})\prod_{\beta = j+1}^{j+q}x_j^{(r-k+\beta)} \\ 
    \Omega^{(r-k+i+s)}_{k}(\x_i,\dots,\x_j) 
    &= \sum_{\ell = 0}^{n-1}\sum_{q = n-1-\ell}^{(n-1)(j-k)-\ell}
    \prod_{\alpha = 0}^{\ell-1}x_i^{(r-k+i+s-\alpha)} 
    \sigma_{(n-1)(k-i)}^{(r-k+i+s-\ell)}(\x_i,\dots,\x_k) \\
    & \tau^{(r+s-\ell)}_{(n-1)(j-k)-\ell-q}(\x_{k+1},\dots,\x_{j-1})\prod_{\beta = j+1}^{j+q}x_j^{(r-k+s+\beta)}.
\end{align*}

If we substitute these expressions into the conjectured identity, then on both the left and right hand side the number of $\x_j$ variables ranges between $0$ and $(n-1)(j-k+1)$. We will now show that the terms with $q$ of the $\x_j$ variables on the left hand side are equal to the terms with $q$ of the $\x_j$ variables on the right hand side.

On the left hand side, if a term contains $q$ of the $\x_j$ variables, then these variables will be $\prod_{\beta = j+1}^{j+q}x_j^{(r-k+\beta)}$.  On the right hand side, if a term contains $q$ of the $\x_j$ variables, then these variables will be
\[\prod_{t = r+1}^{r+s}x_j^{(t+j-k)}\prod_{\beta = j+1}^{j+q-s}x_j^{(r-k+s+\beta)} = \prod_{\beta = j+1}^{j+q}x_j^{(r-k+\beta)}\] as well.

This means we can ignore this product of $\x_j$ variables in our calculations.

To finish our proof, we need to show that the following are equal for $0 \leq q \leq (n-1)(j-k+1)$:
\begin{equation}\label{eq:RHS_1red}
 \left [ \prod_{t = 1}^{n-1}\sigma_{(n-1)(k-i)}^{(r-k+i+t)}(\x_i, \dots, \x_k) \right ] \sum_{\ell = 0}^{n-1}
    \prod_{\alpha = 0}^{\ell-1}x_i^{(r-k+i-\alpha)} 
    \sigma_{(n-1)(k-i-1)}^{(r-k+i-\ell)}(\x_i,\dots,\x_{k-1})
    \tau^{(r-\ell-1)}_{(n-1)(j-k+1)-\ell-q}(\x_{k},\dots,\x_{j-1}),
\end{equation}
\begin{align}\label{eq:RHS_2red}
    \sum_{s=0}^{n-1} &
\prod_{t=r+s+1}^{r+n-1}x_{k}^{(t+1)} 
 \prod_{t = s+2}^{s+n-1} \sigma_{(n-1)(k-i)}^{(r-k+i+t)}(\x_i, \dots, \x_k) \\
 & \sum_{\ell = 0}^{n-1}
    \prod_{\alpha = 0}^{\ell-1}x_i^{(r-k+i+s-\alpha)} 
    \sigma_{(n-1)(k-i)}^{(r-k+i+s-\ell)}(\x_i,\dots,\x_k)
     \tau^{(r+s-\ell)}_{(n-1)(j-k)-\ell-q+s}(\x_{k+1},\dots,\x_{j-1}) \nonumber\\ 
& \sigma_{(n-1)(k-i-1)}^{(r-k+i+s+1)}(\x_i, \dots, \x_{k-1}). \nonumber
\end{align} 

We can simplify the problem further. Currently, the $\tau$ function in $\eqref{eq:RHS_1red}$ is a function of $\x_k, \dots \x_{j-1}$ whereas the $\tau$ function in $\eqref{eq:RHS_2red}$ is a function of $\x_{k+1}, \dots \x_{j-1}$. But recall that we can rewrite a $\tau$ function of $\x_k, \dots, \x_{j-1}$ in terms of $\tau$ functions of $\x_{k+1}, \dots \x_{j-1}$ like so:
\[\tau_{(n-1)(j-k+1)-\ell-q}^{(r-\ell-1)}(\x_k,\dots,\x_{j-1}) = \sum_{t = 0}^{n-1}
\prod_{\beta = 1}^{t}x_k^{(r-\ell-\beta)} 
\tau_{(n-1)(j-k+1)-\ell-q-t}^{(r-\ell-t-1)}(\x_{k+1 \dots j-1}).
\]
After making this substitution, the $\tau$ functions in the two equations will both be functions of $\x_{k+1}, \dots \x_{j-1}$ variables. Now note that the superscripts of all the $\tau$ functions match: If $\tau^{(*)}_{(n-1)(j-k+1)-m}(\x_{k+1},\dots,\x_{j-1})$ appears in either equation then the superscript is $r-(m-q+1)$.  This means that we can prove the identity by showing that the factors containing $\tau^{(r-(m-q+1))}_{(n-1)(j-k+1)-m}(\x_{k+1},\dots,\x_{j-1})$ in both equations are the same.

In the first equation, the terms that contain $\tau^{(r-(m-q+1))}_{(n-1)(j-k+1)-m}(\x_{k+1},\dots,\x_{j-1})$ appear in the sums indexed by $\ell$ and $t$ when $m=\ell+q+t$. Substituting $t = m-q-\ell$, we can simplify the nested sums into one by summing over $\ell$ such that $0 \leq \ell \leq n-1$ and $m-q-(n-1) \leq \ell \leq m-q$. Then the coefficient of $\tau^{(r-(m-q+1))}_{(n-1)(j-k+1)-m}(\x_{k+1},\dots,\x_{j-1})$ is
\begin{equation}\label{eq:LHS_3}
 \left [ \prod_{t = 1}^{n-1}\sigma_{(n-1)(k-i)}^{(r-k+i+t)}(\x_i, \dots, \x_k) \right ] \sum_{
 \substack{
 0 \leq \ell \leq n-1 \\ 
 m-q-(n-1) \leq \ell \leq m-q
 }
 }
    \prod_{\alpha = 0}^{\ell-1}x_i^{(r-k+i-\alpha)} 
    \sigma_{(n-1)(k-i-1)}^{(r-k+i-\ell)}(\x_i,\dots,\x_{k-1}) \prod_{\beta = 1}^{m-q-\ell}x_k^{(r-\ell-\beta)}.
\end{equation}
for $0\leq m-q\leq 2(n-1)$.

In the second equation, the terms that contain $\tau^{(r-(m-q+1))}_{(n-1)(j-k+1)-m}(\x_{k+1},\dots,\x_{j-1})$ appear in the sums indexed by $s$ and $\ell$ when $m-(n-1)=\ell+q-s$. Substituting $\ell = s + (m-q) - (n-1)$, we can simplify the nested sums into one by summing over $s$ such that $0 \leq s \leq n-1$, $(n-1)-(m-q) \leq s \leq 2(n-1)-(m-q)$.
Then the coefficient of $\tau^{(r-(m-q+1))}_{(n-1)(j-k+1)-m}(\x_{k+1},\dots,\x_{j-1})$ is

\begin{align}\label{eq:RHS_3}
    \sigma_{(n-1)(k-i)}^{(r-k+i-(m-q+1))}(\x_i, \dots, \x_k)
\sum_{
 \substack{
 0 \leq s \leq n-1 \\ 
 (n-1)-(m-q) \leq s \leq 2(n-1)-(m-q)
 }} &
\prod_{t=r+s+1}^{r+n-1}x_{k}^{(t+1)} 
 \prod_{t = s+2}^{s+n-1} \sigma_{(n-1)(k-i)}^{(r-k+i+t)}(\x_i, \dots, \x_k) \\ &
    \prod_{\alpha = 0}^{s+m-q-n}x_i^{(r-k+i+s-\alpha)} 
 \sigma_{(n-1)(k-i-1)}^{(r-k+i+s+1)}(\x_i, \dots, \x_{k-1}),\nonumber
\end{align}

To show that $\eqref{eq:LHS_3}$ and $\eqref{eq:RHS_3}$ are equal, we will consider two cases: 
\begin{itemize}
    \item $0 \leq m-q \leq n-1$;
    \item $n-1 \leq m-q \leq 2(n-1).$
\end{itemize} The two cases overlap when $m-q = n-1$, in which case the arguments for the two cases both apply.

In the first case, $\eqref{eq:LHS_3}$ is equal to 
\[
\left [ \prod_{t = 1}^{n-1}\sigma_{(n-1)(k-i)}^{(r-k+i+t)}(\x_i, \dots, \x_k) \right ] P_{m-q}^{(r-k+i)}(\x_i, \dots, \x_k).
\]
In the second case, $\eqref{eq:LHS_3}$ is equal to
\[
\left [ \prod_{t = 1}^{n-1}\sigma_{(n-1)(k-i)}^{(r-k+i+t)}(\x_i, \dots, \x_k) \prod_{\alpha =0}^{m-q-n}
x_i^{(r-k+i-\alpha)}
\prod_{\beta = n}^{m-q}
x_k^{(r-\beta)} \right ] P_{\gamma}^{(r-k+i-\gamma+1)}(\x_i, \dots, \x_k),\] where $\gamma = 2(n-1)-(m-q)$. \lemref{lem:lhs=rhs} shows equality in both cases.
\end{proof}

\section{Combinatorial interpretations}\label{sec:comb-interp}
Following Section 4.3 of \cite{LP2012}, let $N(n,m)$ be the cylindric grid network with $n$ horizontal wires and $m$ vertical loops where each crossing of a horizontal wire and a vertical loop is a vertex and all edges are oriented up and to the right (see Figure~\ref{fig:N(3,5)} for $N(3,5)$; to highlight the paths, the orientation of the edges in the network will be omitted in future figures).
The crossings of the $k$-th vertical loop are given weights of the form $x_k^{(r)}$ such that the upper indices of the vertex weights along around a vertical loop decrease by 1 at each crossing and the upper indices of the vertex weights along a horizontal wire increase by 1 at each crossing. As before, upper indices are taken mod $n$. The left and right endpoints of the horizontal wires are \textit{sources} and \textit{sinks} of $N(n,m)$, respectively. The sources and sinks inherit the upper indices of the closest crossing. 

\begin{figure}[H]
    \centering
    \begin{tikzpicture}[scale=1.3]
        \input{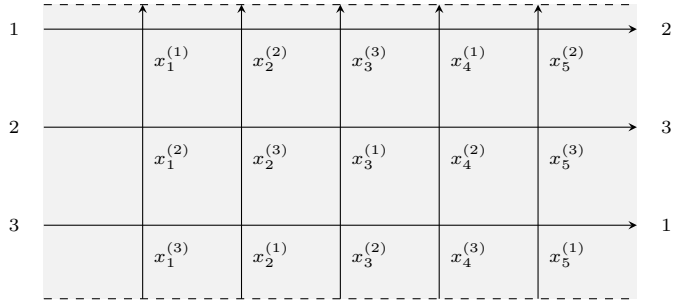}
    \end{tikzpicture}
    \caption{The network $N(3,5)$ with sources and sinks labelled. Note that the dashed top and bottom boundaries are identified.}
    \label{fig:N(3,5)}
\end{figure}

We write $p: s \to r$ to specify a path from source $s$ to sink $r$. A \textit{highway path} is a path from a source to a sink that never uses two up edges in a row (see Figure~\ref{fig:highway} for examples).  The \textit{weight} $\wt(p)$ of a highway path is the product of the weights of the vertices that it passes through when it has two right edges in a row. The \emph{degree} $\deg(p)$ is the degree of the monomial $\wt(p)$.

We can think of a highway path in $N(n,m)$ as a sequence of length $m$, consisting of \emph{through} steps, where the path crosses the vertical wire, and \emph{zigzags} where the path has one up step along the vertical wire. Call swapping an adjacent through step and zigzag in some path $p$ a \emph{switch}. We say a switch is \emph{allowed} if it does not cause the path to have multiple up steps in a row; if the switch is performed on a path in a path family, we require additionally that an allowed switch does not introduce any crossings.

We write $P: S \to R$ to specify a family of highway paths with source set $S$ and sink set $R$. In particular, we will be concerned with families of noncrossing highway paths (note that we allow paths in a noncrossing family to touch at corners). The \textit{weight} $\wt(P)$ of a family $P$ of noncrossing highway paths is the product of the weights of the paths in the family and the \emph{degree} $\deg(P)$ of such a family is the degree of the monomial $\wt(P)$. 

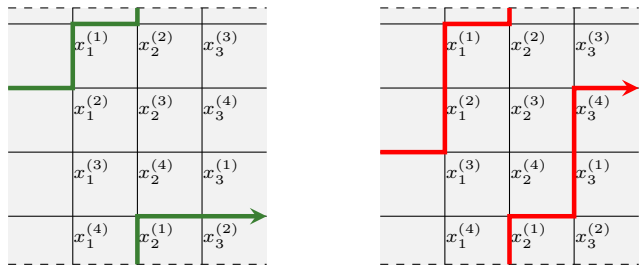
\begin{figure}[H]
    \centering
    \begin{minipage}{0.3\textwidth}
        \centering
        \begin{tikzpicture}[scale=0.85]
\fill[gray!10!white] (0,0.25) rectangle (4,4.25);
\path
(0,1) edge (4,1)
(0,2) edge (4,2)
(0,3) edge (4,3)
(0,4) edge (4,4)
(1,0.25) edge (1,4.25)
(2,0.25) edge (2,4.25)
(3,0.25) edge (3,4.25);
\path[dashed]
(0,0.25) edge (4,0.25)
(0,4.25) edge (4,4.25);
\node at (1.3,0.7) {\scriptsize$x_1^{(4)}$};
\node at (2.3,0.7) {\scriptsize$x_2^{(1)}$};
\node at (3.3,0.7) {\scriptsize$x_3^{(2)}$};
\node at (1.3,1.7) {\scriptsize$x_1^{(3)}$};
\node at (2.3,1.7) {\scriptsize$x_2^{(4)}$};
\node at (3.3,1.7) {\scriptsize$x_3^{(1)}$};
\node at (1.3,2.7) {\scriptsize$x_1^{(2)}$};
\node at (2.3,2.7) {\scriptsize$x_2^{(3)}$};
\node at (3.3,2.7) {\scriptsize$x_3^{(4)}$};
\node at (1.3,3.7) {\scriptsize$x_1^{(1)}$};
\node at (2.3,3.7) {\scriptsize$x_2^{(2)}$};
\node at (3.3,3.7) {\scriptsize$x_3^{(3)}$};
\draw[->, >=stealth,OliveGreen,line width=0.6mm]
(0,3) -- (1,3) -- (1,4) -- (2,4) -- (2,4.25)
(2,0.25) -- (2,1) -- (3,1) -- (4,1);
\end{tikzpicture}
    \end{minipage}
    \begin{minipage}{0.3\textwidth}
        \centering
       \begin{tikzpicture}[scale=0.85]
\fill[gray!10!white] (0,0.25) rectangle (4,4.25);
\path
(0,1) edge (4,1)
(0,2) edge (4,2)
(0,3) edge (4,3)
(0,4) edge (4,4)
(1,0.25) edge (1,4.25)
(2,0.25) edge (2,4.25)
(3,0.25) edge (3,4.25);
\path[dashed]
(0,0.25) edge (4,0.25)
(0,4.25) edge (4,4.25);
\node at (1.3,0.7) {\scriptsize$x_1^{(4)}$};
\node at (2.3,0.7) {\scriptsize$x_2^{(1)}$};
\node at (3.3,0.7) {\scriptsize$x_3^{(2)}$};
\node at (1.3,1.7) {\scriptsize$x_1^{(3)}$};
\node at (2.3,1.7) {\scriptsize$x_2^{(4)}$};
\node at (3.3,1.7) {\scriptsize$x_3^{(1)}$};
\node at (1.3,2.7) {\scriptsize$x_1^{(2)}$};
\node at (2.3,2.7) {\scriptsize$x_2^{(3)}$};
\node at (3.3,2.7) {\scriptsize$x_3^{(4)}$};
\node at (1.3,3.7) {\scriptsize$x_1^{(1)}$};
\node at (2.3,3.7) {\scriptsize$x_2^{(2)}$};
\node at (3.3,3.7) {\scriptsize$x_3^{(3)}$};
\draw[->, >=stealth,red,line width=0.6mm]
(0,2) -- (1,2) -- (1,3) -- (1,4) -- (2,4) -- (2,4.25)
(2,0.25) -- (2,1) -- (3,1) -- (3,2) -- (3,3) -- (4,3);
\end{tikzpicture}
    \end{minipage}
    \caption{A highway path with weight $x_3^{(2)}$ (left) and a non-highway path (right) in $N(4,3)$. The highway path on the left can be thought of as two zigzags steps and then a through step.}
    \label{fig:highway}
\end{figure}

\begin{remark}
Although previous sections worked with functions of $\x_i, \dots, \x_j$, in this section, we only give combinatorial interpretations $\tau, \sigma, \bar{\sigma}$ and $\Omega$ functions of $\x_1, \dots, \x_m$ to simplify notation. Correspondingly, the weights of crossings in $N(n,m)$ are variables in $\x_1, \dots, \x_m$. A combinatorial interpretation for functions of $\x_i, \dots, \x_j$ can be easily obtained by appropriately shifting the lower indices of the weights of the network $N(n,j-i+1)$. This is implicit in the proof of Theorem \ref{thm:comb_omega}, which uses the combinatorial interpretation of a $\bar{\sigma}$ function in the variables $\x_{k+1}, \dots, \x_m$.
\end{remark}

Let $\nye{N}(n,m)$ be the universal cover of $N(n,m)$ (see Figure \ref{fig:universalCover}).  Choose a lift of source 1 in $N(n,m)$ to label as source 1 in $\nye{N}(n,m)$.  Label the rest of the sources such that if a horizontal wire has source $s_i$ then the horizontal wire below has source $s_i+1$.  Label the sink of the horizontal wire with source $s_i$ as $s_i+m-1$. Note that every source and sink in $\nye{N}(n,m)$ has a label congruent modulo $n$ to the label of its projection in $N(n,m)$.

\begin{figure}
    \centering
    \begin{tikzpicture}
\fill[gray!10!white] (0,0.25) rectangle (7,6.25);
\path
(0,1) edge (7,1)
(0,2) edge (7,2)
(0,3) edge (7,3)
(0,4) edge (7,4)
(0,5) edge (7,5)
(0,6) edge (7,6)
(1,0.25) edge (1,6.25)
(2,0.25) edge (2,6.25)
(3,0.25) edge (3,6.25)
(4,0.25) edge (4,6.25)
(5,0.25) edge (5,6.25)
(6,0.25) edge (6,6.25);

\node at (1.3,0.7) {\scriptsize$x_1^{(2)}$};
\node at (2.3,0.7) {\scriptsize$x_2^{(1)}$};
\node at (3.3,0.7) {\scriptsize$x_3^{(2)}$};
\node at (4.3,0.7) {\scriptsize$x_4^{(1)}$};
\node at (5.3,0.7) {\scriptsize$x_5^{(2)}$};
\node at (6.3,0.7) {\scriptsize$x_6^{(1)}$};
\node at (1.3,1.7) {\scriptsize$x_1^{(1)}$};
\node at (2.3,1.7) {\scriptsize$x_2^{(2)}$};
\node at (3.3,1.7) {\scriptsize$x_3^{(1)}$};
\node at (4.3,1.7) {\scriptsize$x_4^{(2)}$};
\node at (5.3,1.7) {\scriptsize$x_5^{(1)}$};
\node at (6.3,1.7) {\scriptsize$x_6^{(2)}$};
\node at (1.3,2.7) {\scriptsize$x_1^{(2)}$};
\node at (2.3,2.7) {\scriptsize$x_2^{(1)}$};
\node at (3.3,2.7) {\scriptsize$x_3^{(2)}$};
\node at (4.3,2.7) {\scriptsize$x_4^{(1)}$};
\node at (5.3,2.7) {\scriptsize$x_5^{(2)}$};
\node at (6.3,2.7) {\scriptsize$x_6^{(1)}$};
\node at (1.3,3.7) {\scriptsize$x_1^{(1)}$};
\node at (2.3,3.7) {\scriptsize$x_2^{(2)}$};
\node at (3.3,3.7) {\scriptsize$x_3^{(1)}$};
\node at (4.3,3.7) {\scriptsize$x_4^{(2)}$};
\node at (5.3,3.7) {\scriptsize$x_5^{(1)}$};
\node at (6.3,3.7) {\scriptsize$x_6^{(2)}$};
\node at (1.3,4.7) {\scriptsize$x_1^{(2)}$};
\node at (2.3,4.7) {\scriptsize$x_2^{(1)}$};
\node at (3.3,4.7) {\scriptsize$x_3^{(2)}$};
\node at (4.3,4.7) {\scriptsize$x_4^{(1)}$};
\node at (5.3,4.7) {\scriptsize$x_5^{(2)}$};
\node at (6.3,4.7) {\scriptsize$x_6^{(1)}$};
\node at (1.3,5.7) {\scriptsize$x_1^{(1)}$};
\node at (2.3,5.7) {\scriptsize$x_2^{(2)}$};
\node at (3.3,5.7) {\scriptsize$x_3^{(1)}$};
\node at (4.3,5.7) {\scriptsize$x_4^{(2)}$};
\node at (5.3,5.7) {\scriptsize$x_5^{(1)}$};
\node at (6.3,5.7) {\scriptsize$x_6^{(2)}$};

\node at (-.5, 7) {\scriptsize $\vdots$};
\node at (-.5, 6) {\scriptsize $-1$};
\node at (-.5, 5) {\scriptsize $0$};
\node at (-.5, 4) {\scriptsize $1$};
\node at (-.5, 3) {\scriptsize $2$};
\node at (-.5, 2) {\scriptsize $3$};
\node at (-.5, 1) {\scriptsize $4$};
\node at (-.5, 0) {\scriptsize $\vdots$};

\node at (7.5, 7) {\scriptsize $\vdots$};
\node at (7.5, 6) {\scriptsize $4$};
\node at (7.5, 5) {\scriptsize $5$};
\node at (7.5, 4) {\scriptsize $6$};
\node at (7.5, 3) {\scriptsize $7$};
\node at (7.5, 2) {\scriptsize $8$};
\node at (7.5, 1) {\scriptsize $9$};
\node at (7.5, 0) {\scriptsize $\vdots$};

\node at (1, 7) {\scriptsize $\vdots$};
\node at (2, 7) {\scriptsize $\vdots$};
\node at (3, 7) {\scriptsize $\vdots$};
\node at (4, 7) {\scriptsize $\vdots$};
\node at (5, 7) {\scriptsize $\vdots$};
\node at (6, 7) {\scriptsize $\vdots$};

\node at (1, 0) {\scriptsize $\vdots$};
\node at (2, 0) {\scriptsize $\vdots$};
\node at (3, 0) {\scriptsize $\vdots$};
\node at (4, 0) {\scriptsize $\vdots$};
\node at (5, 0) {\scriptsize $\vdots$};
\node at (6, 0) {\scriptsize $\vdots$};
\end{tikzpicture}
    \caption{$\nye{N}(2,6)$.}
    \label{fig:universalCover}
\end{figure}

We now proceed to state and prove a combinatorial interpretation of $\tau$ functions. The following theorem is proven for certain cases in \cite{LP2012} through Lemma 6.5, which shows that $\tau^{(0)}_{(n-1)m-rn}(\x_1, \dots, \x_m)$ is a cylindric loop Schur function, and Proposition 4.7, which establishes a weight-preserving bijection between cylindric semistandard Young tableaux and families of noncrossing highway paths in $N(n,m)$ with specific source and sink sets. 

We extend these results to the generality of all $\tau$ functions by directly appealing to the properties of noncrossing paths.
\begin{theorem}\label{thm:comb_tau}
Let $k=\ell(n-1)+t$ where $0\leq t< n-1$.  Define $s_i=r-i+1$ and $$r_i=\begin{cases}s_i+\ell & i\leq t,\\s_i+\ell-1 & i>t,\end{cases}$$
where $i$ ranges from 1 to $n-1$. Let $\cP^{(r)}_{k}$ be the set of families $P = \{p_i: s_i\to r_i \}$ of noncrossing highway paths in $N(n,m)$ such that $\deg(P) = k$. Then
\[\tau_k^{(r)}(\x_1, \dots, \x_m)=\sum_{P \in \cP^{(r)}_{k}}\wt(P).\]
\end{theorem}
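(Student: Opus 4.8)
The plan is to exhibit a weight-preserving bijection between the index set of $\tau_k^{(r)}$ — weakly increasing sequences $1\le i_1\le\cdots\le i_k\le m$ in which no value is repeated more than $n-1$ times — and the family set $\cP_k^{(r)}$.

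First I would record how a single highway path behaves. Encode a highway path $p$ from the source labelled $s$ by its word of length $m$ in $\{\text{through},\text{zigzag}\}$, one letter per vertical loop. Since a highway path never takes two up-steps in a row, the up-step before a through column is always immediately followed by a right step, and a direct check of the weighting rules of $N(n,m)$ gives: (i) a vertex contributes to $\wt(p)$ exactly when $p$ takes a through step there, so $\deg(p)$ is the number $d$ of through letters; (ii) tracking the upper index along $p$, using that it increases by $1$ along a horizontal wire and decreases by $1$ along a vertical loop, the contribution of $p$ is $x_{c_1}^{(s)}x_{c_2}^{(s+1)}\cdots x_{c_d}^{(s+d-1)}$, where $c_1<\cdots<c_d$ are the through columns; and (iii) $p$ terminates at the sink labelled $s+d-1\pmod n$. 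In particular consecutive through steps of one path carry upper indices differing by exactly $1$.

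Next, given $P=\{p_i:s_i\to r_i\}\in\cP_k^{(r)}$, I would read off $\wt(P)=\prod_i\wt(p_i)$ from (i)--(ii): it is a product of $\deg(P)=k$ factors $x_c^{(\rho)}$ in which each column $c$ appears at most $n-1$ times (only $n-1$ paths, each using column $c$ at most once). Because the sources $s_i=r-i+1$ are consecutive and (iii) forces the degrees $d_i$ (with $k=\ell(n-1)+t$, one gets $d_i=\ell+1$ for $i\le t$ and $d_i=\ell$ for $i>t$ after reducing), the $n-1$ increasing runs of upper indices produced by (ii) assemble, as a multiset of pairs $(c,\rho)$, into exactly the pairs in which the columns listed in weakly increasing order carry the single decreasing run $r,r-1,\dots,r-k+1$; that is, $\wt(P)$ is precisely the $\tau_k^{(r)}$-term indexed by the weakly increasing sequence of through columns of $P$. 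Conversely each term of $\tau_k^{(r)}$ determines its index sequence: since every value occurs at most $n-1<n$ times, the $x_c$-degree of the monomial recovers the multiplicity of $c$, and weak monotonicity fixes the order. So the theorem reduces to showing that each valid index sequence (equivalently each composition $(m_1,\dots,m_m)$ with $\sum m_c=k$ and $0\le m_c\le n-1$) is realized by \emph{exactly one} $P\in\cP_k^{(r)}$.

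This last statement is the heart of the argument and, I expect, the main obstacle: I must show that prescribing, for each column $c$, that exactly $m_c$ of the $n-1$ strands take a through step and the rest take a zigzag — together with the fixed sources $s_i$ and the requirement of being noncrossing on the cylinder — pins down a unique family, whose terminal wires are then automatically $r_1,\dots,r_n$ with the ``$+\ell$ versus $+\ell-1$'' split occurring exactly at index $t$, and that in this family the $m_c$ through steps in column $c$ carry exactly the right block of consecutive upper indices (which is what makes $\wt(P)$ equal to the expected monomial). I would handle this by building the family column by column and using the \emph{allowed switch} operations: any arrangement with the prescribed column multiplicities can be straightened to a unique noncrossing representative, and one tracks what the straightening does to the endpoints. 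This is exactly the extension, from the ``loop Schur'' shapes treated in \cite{LP2012} (Proposition 4.7 there, via cylindric semistandard Young tableaux, together with Lemma 6.5) to arbitrary $k$ and $r$, of the correspondence between noncrossing highway path families and tableaux; the wrap-around on the cylinder is the delicate bookkeeping, and one can either reprove the correspondence directly in this generality or reduce to the cited statements.
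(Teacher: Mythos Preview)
Your overall plan—set up a weight-preserving bijection between $\tau$-index sequences and families in $\cP_k^{(r)}$ via the column-multiplicity vector $(m_1,\dots,m_m)$—is sound and is a genuinely different route from the paper's. The paper instead constructs an explicit \emph{initial family} $P_0$ realising the lexicographically smallest term, shows that each unit increment of an index $i_j$ corresponds to an allowed switch on the family, and then (passing to the universal cover to pin down the individual degrees $d_i$) shows that every $P\in\cP_k^{(r)}$ is connected to $P_0$ by allowed switches. Your approach would instead give a direct static description of the unique family attached to each multiplicity vector; if carried out, it is arguably cleaner.

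There is, however, a real gap in the proposal as written. You assert that the pairs $(c,\rho)$ coming from a family $P$ ``assemble'' into the $\tau$-monomial with the same column multiplicities, but this does not follow from the degree decomposition alone: knowing each $d_i$ only fixes the multiset of upper indices along $p_i$, not which column each one sits in. Concretely, with $n=3$, $k=3$, $r=3$ one has $p_1$ contributing $x_{c_{1,1}}^{(3)}x_{c_{1,2}}^{(1)}$ with $c_{1,1}<c_{1,2}$ and $p_2$ contributing $x_{c_{2,1}}^{(2)}$; nothing you have written forces $c_{1,1}\le c_{2,1}\le c_{1,2}$. (Separately, ``(iii) forces the degrees $d_i$'' is not yet justified either: the sink only determines $d_i\bmod n$.) What is actually needed is a local analysis at each column. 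If $e_c$ denotes the unique unoccupied wire just before column $c$, then the forbidden configuration is a through on wire $w$ with a zigzag on wire $w+1$ immediately below (the two paths would share the outgoing horizontal edge); hence the $m_c$ through-paths at column $c$ must occupy precisely the cyclically contiguous block $e_c-1,e_c-2,\dots,e_c-m_c$. This determines the family uniquely column by column, yields the recursion $e_{c+1}\equiv e_c-1-m_c\pmod n$ with $e_1=r+1$, and a short computation then gives that the through-vertices at column $c$ carry upper indices $r-M_c,\dots,r-M_c-m_c+1$ where $M_c=\sum_{c'<c}m_{c'}$, exactly the block of the $\tau$-monomial. Your phrase ``straightening via allowed switches'' does not capture this mechanism; the argument is a direct determination rather than a straightening, and it is this single step that simultaneously delivers uniqueness, the correct sinks, and the correct weight.
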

\begin{proof}
We begin by showing that each term of $\tau_k^{(r)}$ corresponds to a path family $P \in \cP^{(r)}_{k}$.  Consider the term in $\tau_k^{(r)}$ where each index in the sum is as low as possible: \[x_1^{(r)}x_1^{(r-1)}\dots x_1^{(r-n+2)}x_2^{(r-n+1)}\dots x_\ell^{(r-k+t+1)}x_{\ell+1}^{(r-k+t)}\dots x_{\ell+1}^{(r-k+1)}.\]
We get this \textit{initial term} as the weight of an \textit{initial family} of paths $P_0 = \{p_i: s_i \to r_i\}$, where $p_i$ is defined as follows. If $1 \leq i \leq t$, then the path $p_i$ goes through the first $\ell + 1$ crossings and zigzags until it reaches a sink. If $t < i \leq n-1$, then the path $p_i$ goes through the first $\ell$ crossings and similarly zigzags the rest of the way. This does not result in crossings (see Figure~\ref{fig:comb_tau}). We can compute which sink each path will end at by starting with the source it started at, increasing by 1 for each crossing it went straight through, and subtracting one at the end because the indices for the sinks are not shifted from the indices of the vertices in the previous column (zigzagging doesn't change the index). This shows that the sink of $p_i$ is indeed $r_i$.
\begin{figure}
        \centering
\begin{tikzpicture}
    \fill[gray!10!white] (-0.7,0.5) rectangle (7.7,9.5);
    \path[dashed]
    (-0.7,0.5) edge (7.7,0.5)
    (-0.7,9.5) edge (7.7,9.5);
    \path
    (0,1) edge (7,1)
    (0,2) edge (7,2)
    
    (0,3.5) edge (7,3.5)
    (0,4.5) edge (7,4.5)
    (0,5.5) edge (7,5.5)
    (0,6.5) edge (7,6.5)
    
    (0,8) edge (7,8)
    (0,9) edge (7,9)
    
    (1,0.5) edge (1,2.5)
    (1,3) edge (1,7)
    (1,7.5) edge (1,9.5)
    
    (3,0.5) edge (3,2.5)
    (3,3) edge (3,7)
    (3,7.5) edge (3,9.5)
    
    (5,0.5) edge (5,2.5)
    (5,3) edge (5,7)
    (5,7.5) edge (5,9.5);
    
    \node at (1.7,0.75) {\tiny$x_\ell^{(r+\ell-1)}$};
    \node at (1.7,1.75) {\tiny$x_\ell^{(r+\ell-2)}$};
    \node at (1.7,3.25) {\tiny$x_\ell^{(r+\ell-t)}$};
    \node at (1.9,4.25) {\tiny$x_\ell^{(r+\ell-t-1)}$};
    \node at (1.9,5.25) {\tiny$x_\ell^{(r+\ell-t-2)}$};
    \node at (1.9,6.25) {\tiny$x_\ell^{(r+\ell-t-3)}$};
    \node at (1.7,7.75) {\tiny$x_\ell^{(r+\ell+1)}$};
    \node at (1.55,8.75) {\tiny$x_{\ell}^{(r+\ell)}$};
    
    \node at (3.7,0.75) {\tiny$x_{\ell+1}^{(r+\ell)}$};
    \node at (3.7,1.75) {\tiny$x_{\ell+1}^{(r+\ell-1)}$};
    \node at (3.9,3.25) {\tiny$x_{\ell+1}^{(r+\ell-t+1)}$};
    \node at (3.7,4.25) {\tiny$x_{\ell+1}^{(r+\ell-t)}$};
    \node at (3.9,5.25) {\tiny$x_{\ell+1}^{(r+\ell-t-1)}$};
    \node at (3.9,6.25) {\tiny$x_{\ell+1}^{(r+\ell-t-2)}$};
    \node at (3.7,7.75) {\tiny$x_{\ell+1}^{(r+\ell+2)}$};
    \node at (3.7,8.75) {\tiny$x_{\ell+1}^{(r+\ell+1)}$};
    
    \node at (5.7,0.75) {\tiny$x_{\ell+2}^{(r+\ell+1)}$};
    \node at (5.7,1.75) {\tiny$x_{\ell+2}^{(r+\ell)}$};
    \node at (5.9,3.25) {\tiny$x_{\ell+2}^{(r+\ell-t+2)}$};
    \node at (5.9,4.25) {\tiny$x_{\ell+2}^{(r+\ell-t+1)}$};
    \node at (5.7,5.25) {\tiny$x_{\ell+2}^{(r+\ell-t)}$};
    \node at (5.9,6.25) {\tiny$x_{\ell+2}^{(r+\ell-t-1)}$};
    \node at (5.7,7.75) {\tiny$x_{\ell+2}^{(r+\ell+3)}$};
    \node at (5.7,8.75) {\tiny$x_{\ell+2}^{(r+\ell+2)}$};
    
    \node at (-0.3,1) {$\dots$};
    \node at (-0.3,2) {$\dots$};
    \node at (-0.3,3.5) {$\dots$};
    \node at (-0.3,4.5) {$\dots$};
    \node at (-0.3,5.5) {$\dots$};
    \node at (-0.3,6.5) {$\dots$};
    \node at (-0.3,8) {$\dots$};
    \node at (-0.3,9) {$\dots$};
    
    \node at (7.4,1) {$\dots$};
    \node at (7.4,2) {$\dots$};
    \node at (7.4,3.5) {$\dots$};
    \node at (7.4,4.5) {$\dots$};
    \node at (7.4,5.5) {$\dots$};
    \node at (7.4,6.5) {$\dots$};
    \node at (7.4,8) {$\dots$};
    \node at (7.4,9) {$\dots$};    

    \node at (1,2.85) {$\vdots$};
    \node at (3,2.85) {$\vdots$};
    \node at (5,2.85) {$\vdots$};
    \node at (4,8) {$\vdots$};
    
    \draw[->, >=stealth, line width=0.8mm,YellowGreen]
    (0,1) -- (5,1)--(5,2) -- (7,2);
    \draw[line width=0.8mm,OliveGreen]
    (0,2) -- (5,2) -- (5,2.5);
    
    \draw[->, >=stealth, line width=0.8mm,YellowGreen]
    (5,3) -- (5,3.5) -- (7,3.5);
    \draw[->, >=stealth, line width=0.8mm,OliveGreen]
    (0,3.5) -- (5,3.5) -- (5,4.5) -- (7,4.5);
    \draw[->, >=stealth, line width=0.8mm,YellowGreen]
    (0,4.5) -- (3,4.5) -- (3,5.5) -- (7,5.5);
    \draw[->, >=stealth, line width=0.8mm,OliveGreen]
    (0,5.5) -- (3,5.5) -- (3,6.5) -- (7,6.5);
    \draw[line width=0.8mm,YellowGreen]
    (0,6.5) -- (3,6.5)--(3,7);
    
    \draw[->, >=stealth, line width=0.8mm,OliveGreen]
    (3,7.5) -- (3,8) -- (7,8);
    \draw[->, >=stealth, line width=0.8mm,YellowGreen]
    (0,8) -- (3,8) -- (3,9) -- (7,9);
    
    \node at (-1.3,1) {\tiny$s_1 = r$};
    \node at (-1.5,2) {\tiny$s_2 = r-1$};
    \node at (-1.7,3.5) {\tiny$s_t = r-t+1$};
    \node at (-1.65,4.5) {\tiny$s_{t+1} = r-t$};
    \node at (-1.85,5.5) {\tiny$s_{t+2} = r-t-1$};
    \node at (-1.85,6.5) {\tiny$s_{t+3} = r-t-2$};
    \node at (-1.65,8) {\tiny$s_{n-1} = r+2$};
    \node at (-1.3,9) {\tiny$r+1$};
    
    \node at (7.4,1) {$\dots$};
    \node at (7.4,2) {$\dots$};
    \node at (7.4,3.5) {$\dots$};
    \node at (7.4,4.5) {$\dots$};
    \node at (7.4,5.5) {$\dots$};
    \node at (7.4,6.5) {$\dots$};
    \node at (7.4,8) {$\dots$};
    \node at (7.4,9) {$\dots$};   
    \end{tikzpicture}
    \caption{The initial family of paths $P_0$ in the proof of \thmref{thm:comb_tau}.}
    \label{fig:comb_tau}
\end{figure}

To show that each term is the weight of some family of noncrossing paths, we proceed by induction. 

We can get all other terms in $\tau_k^{(r)}$ by increasing the lower indices of this initial term one by one while maintaining the restrictions on the terms of $\tau$ at each step. Suppose that some term $x_{i_1}^{(r)}x_{i_2}^{(r-1)}\dots x_{i_k}^{(r-k+1)}$ in $\tau_k^{(r)}$ is the weight of a family $P$ of noncrossing highway paths with sources and sinks as described above, has $i_j=a$, and that shifting to $i_j=a+1$ gives another term in $\tau_k^{(r)}$. Note that if changing this index is allowed, this means $i_{j+1}>a$. It suffices to show that the new term is also the weight of a path family.

Since we can change the index of $i_j$, it must be that in $P$, the path that goes through the vertex with weight $x_a^{(r-j+1)}$ does not go through the next crossing. If it did, it would pick up the weight $x_{a+1}^{(r-j+2)}$. This would mean $\wt(P)$ must have been $\dots x_a^{(r-j+1)}x_{a+1}^{(r-j)}x_{a+1}^{(r-j-1)}\dots x_{a+1}^{(r-j-n+2)}\dots$, where $a+1$ appears as an index $n-1$ times. In this case, we would not be allowed to change $i_j$ from $a$ to $a+1$. So there is a path in $P$ that goes straight through $x_a^{(r-j+1)}$ and then zigzags at the next crossing.

We can apply a switch to the path through $x_a^{(r-j+1)}$ so that the path zigzags at $x_a^{(r-j+1)}$ and then goes straight through $x_{a+1}^{(r-j+1)}$. Since $i_{j+1}>a$ and $a$ appears as an index at most $n-1$ times, there is no path that goes through $x_a^{(r-j)}$. Hence this is an allowed switch. This gives us a family of highway paths that corresponds to the new term. 

Now we need to show that any path family $P \in \cP^{(r)}_k$ gives a term in $\tau_k^{(r)}$. An allowed switch on a path family $P$ replaces $x_i^{(a)}$ with $x_{i+1}^{(a)}$ in $\wt(P)$ when there is no $x_i^{(a-1)}$. So if $\wt(P)$ is a term in $\tau_k^{(r)}$, performing an allowed switch on $P$ generates a new term in $\tau_k^{(r)}$. Therefore, it suffices to show that any path family $P \in \cP^{(r)}_k$ is related to the initial family $P_0$ by a sequence of allowed switches. 

Consider the lifts $\nye{P}_o=\{p_i^*: s_i\to \nye{r_i}\}$ and $\nye{P}=\{p_i: s_i\to \nye{r_i}'\}$ in $\nye{N}(n,m)$ of $P$ and $P_0$. Since there are no crossings in $P$, all the sinks of $\nye{P}$ must have pairwise differences of less than $n$, and likewise for $\nye{P_0}$.  The sums of the sinks of $\nye{P}_0$ and $\nye{P}$ must be the same in order for the path families to have the same degrees, which means we must have $\nye{r_i}=\nye{r_i}'$, and $\deg(p_i) = \deg(p_i^*)$. 

We will now choose a sequence of allowed switches. So that we can better refer to the relative position of paths, consider the lift $\nye{P}$ of $P$ in $\nye{N}(n,m)$ such that the lifts of the sources are consecutive and the lowest (largest) source is $r$. Let loop $i$ be the first vertical loop where $\nye{P}$ differs from $\nye{P_0}$, and consider the lowest crossing $v$ where this difference occurs: a path $p$ zigzags at $v$ when the corresponding path $p^*$ in $P_0$ goes straight through $v$. Since $\deg(p) = \deg(p^*)$, there is at least one more through step in $p$. So we know that starting at $v$, our path $p$ zigzags at least once and then goes through some crossing $v'$. Between the crossing $v$ and $v'$, we will perform a sequence of switches starting at the crossing $v'$ and the previous crossing so that instead of consecutively zigzagging a number of times and then going through $v'$, this path will now first go through $v$ and then zigzag until it reaches $v'$. This will not introduce a crossing if $p$ is the path with source $r$, since there is no path in the family that has source $r+1$. It will also not introduce a crossing otherwise, because the path immediately below agrees with the corresponding path in the minimal family, which means that it goes through to the right at loop $i$.
\end{proof}

\begin{remark} \label{rem:altdefforPk}
The set of sources $S$ and set of sinks $R$ in Theorem~\ref{thm:comb_tau} have a clean presentation in terms of $r$ and $k$, namely $S = [n] \setminus \{r+1\}$ and $R = [n] \setminus \{r-k\}$. If two path families $P_1, P_2: S \to R$ lift to families with the same sources in the universal cover, they have the same degree if and only if their lifts have the same sink sets in the universal cover. The pairing between sources and sinks is then determined by the noncrossing property of the family. Thus we need not place restrictions on the pairing between sources and sinks and path families in $\cP_{k}^{(r)}$. In other words, $\cP_{k}^{(r)}$ is equivalently the set of noncrossing families of highway paths $P: S \to R$ such that $\deg(P) = k$.
\end{remark}
\begin{example} Consider $\tau_5^{(3)}(\x_1, \x_2)$ where $n = 4$. We have $k = 5, \ell = 1, t = 2$, and $r = 3$. So $s_1 = 3$, $s_2 = 2$, $s_3 = 1$, $r_1 = 4$, $r_2 = 3$, and $r_3 = 1$. There are two path families, $P_1, P_2$, that consist of highway paths $p_i: s_i \to r_i$, as in Figure~\ref{fig:tau-example}.
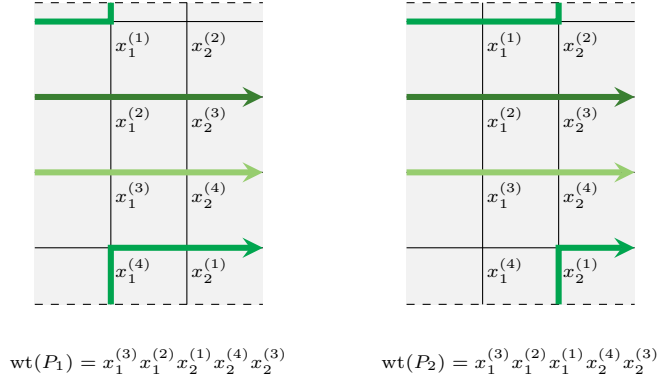
\begin{figure}
    \centering
    \begin{minipage}{0.3\textwidth}
    \begin{tikzpicture}
\fill[gray!10!white] (0,0.25) rectangle (3,4.25);
\path
(0,1) edge (3,1)
(0,2) edge (3,2)
(0,3) edge (3,3)
(0,4) edge (3,4)
(1,0.25) edge (1,4.25)
(2,0.25) edge (2,4.25);
\path[dashed]
(0,0.25) edge (3,0.25)
(0,4.25) edge (3,4.25);
\node at (1.3,0.7) {\scriptsize$x_1^{(4)}$};
\node at (2.3,0.7) {\scriptsize$x_2^{(1)}$};
\node at (1.3,1.7) {\scriptsize$x_1^{(3)}$};
\node at (2.3,1.7) {\scriptsize$x_2^{(4)}$};
\node at (1.3,2.7) {\scriptsize$x_1^{(2)}$};
\node at (2.3,2.7) {\scriptsize$x_2^{(3)}$};
\node at (1.3,3.7) {\scriptsize$x_1^{(1)}$};
\node at (2.3,3.7) {\scriptsize$x_2^{(2)}$};
\draw[->, >=stealth,YellowGreen,line width=0.8mm]
(0,2) -- (1,2) -- (2,2) -- (3,2);
\draw[->, >=stealth,OliveGreen,line width=0.8mm]
(0,3) -- (1,3) -- (2,3) -- (3,3);
\draw[->, >=stealth,Green,line width=0.8mm]
(0,4) -- (1,4) -- (1,4.25)
(1,0.25) -- (1,1) -- (2,1) -- (3,1);
\node at (1.5,-0.5) {\scriptsize$\wt(P_1) =x_1^{(3)}x_1^{(2)}x_2^{(1)} x_2^{(4)}x_2^{(3)}$};
\end{tikzpicture}
    \end{minipage}
    \begin{minipage}{0.3\textwidth}
    \begin{tikzpicture}[scale=1]
\fill[gray!10!white] (0,0.25) rectangle (3,4.25);
\path
(0,1) edge (3,1)
(0,2) edge (3,2)
(0,3) edge (3,3)
(0,4) edge (3,4)
(1,0.25) edge (1,4.25)
(2,0.25) edge (2,4.25);
\path[dashed]
(0,0.25) edge (3,0.25)
(0,4.25) edge (3,4.25);
\node at (1.3,0.7) {\scriptsize$x_1^{(4)}$};
\node at (2.3,0.7) {\scriptsize$x_2^{(1)}$};
\node at (1.3,1.7) {\scriptsize$x_1^{(3)}$};
\node at (2.3,1.7) {\scriptsize$x_2^{(4)}$};
\node at (1.3,2.7) {\scriptsize$x_1^{(2)}$};
\node at (2.3,2.7) {\scriptsize$x_2^{(3)}$};
\node at (1.3,3.7) {\scriptsize$x_1^{(1)}$};
\node at (2.3,3.7) {\scriptsize$x_2^{(2)}$};
\draw[->, >=stealth,YellowGreen,line width=0.8mm]
(0,2) -- (1,2) -- (2,2) -- (3,2);
\draw[->, >=stealth,OliveGreen,line width=0.8mm]
(0,3) -- (1,3) -- (2,3) -- (3,3);
\draw[->, >=stealth,Green,line width=0.8mm]
(0,4) -- (1,4) -- (2,4) -- (2,4.25)
(2,0.25) -- (2,1) -- (3,1);
\node at (1.5,-0.5) {\scriptsize$\wt(P_2) = x_1^{(3)}x_1^{(2)}x_1^{(1)}x_2^{(4)}x_2^{(3)}$};
\end{tikzpicture}
    \end{minipage}
    \caption{The two path families whose weights sum to $\tau^{(3)}_5(\x_1, \x_2)$}
    \label{fig:tau-example}
\end{figure}
In the example above, the path families consisting of $p_i: s_i \to r_i$ happen to have degree $5$. But the degree requirement becomes nontrivial in the following example. There is only one path family of degree $8$ with $p_1: 3 \to 3$, $p_2: 2 \to 2$, but there are more path families with $p_1: 3 \to 3$, $p_2: 2 \to 2$ but of lower degree (Figure \ref{fig:tau-why-degree}).
\begin{figure}
    \centering
    \begin{minipage}{0.4\textwidth}
    \begin{tikzpicture}
\fill[gray!10!white] (0,0.25) rectangle (5,3.25);
\path
(0,1) edge (5,1)
(0,2) edge (5,2)
(0,3) edge (5,3)
(1,0.25) edge (1,3.25)
(2,0.25) edge (2,3.25)
(3,0.25) edge (3,3.25)
(4,0.25) edge (4,3.25);
\path[dashed]
(0,0.25) edge (5,0.25)
(0,3.25) edge (5,3.25);
\node at (1.3,0.7) {\scriptsize$x_1^{(3)}$};
\node at (2.3,0.7) {\scriptsize$x_2^{(1)}$};
\node at (3.3,0.7) {\scriptsize$x_3^{(2)}$};
\node at (4.3,0.7) {\scriptsize$x_4^{(3)}$};
\node at (1.3,1.7) {\scriptsize$x_1^{(2)}$};
\node at (2.3,1.7) {\scriptsize$x_2^{(3)}$};
\node at (3.3,1.7) {\scriptsize$x_3^{(1)}$};
\node at (4.3,1.7) {\scriptsize$x_4^{(2)}$};
\node at (1.3,2.7) {\scriptsize$x_1^{(1)}$};
\node at (2.3,2.7) {\scriptsize$x_2^{(2)}$};
\node at (3.3,2.7) {\scriptsize$x_3^{(3)}$};
\node at (4.3,2.7) {\scriptsize$x_4^{(1)}$};
\draw[->, >=stealth,YellowGreen,line width=0.8mm]
(0,1) -- (1,1) -- (2,1) -- (3,1) -- (4,1) -- (5,1);
\draw[->, >=stealth,OliveGreen,line width=0.8mm]
(0,2) -- (1,2) -- (2,2) -- (3,2) -- (4,2) -- (5,2);
\node at (2.5,-0.5) {\scriptsize$x_1^{(3)}x_1^{(2)}x_2^{(1)}x_2^{(3)}x_3^{(2)}x_3^{(1)}x_4^{(3)}x_4^{(2)}$}; 
\end{tikzpicture}
    \end{minipage}
    \begin{minipage}{0.4\textwidth}
\begin{tikzpicture}
\fill[gray!5!white] (0,0.25) rectangle (5,3.25);
\path
(0,1) edge (5,1)
(0,2) edge (5,2)
(0,3) edge (5,3)
(1,0.25) edge (1,3.25)
(2,0.25) edge (2,3.25)
(3,0.25) edge (3,3.25)
(4,0.25) edge (4,3.25);
\path[dashed]
(0,0.25) edge (5,0.25)
(0,3.25) edge (5,3.25);
\node at (1.3,0.7) {\scriptsize$x_1^{(3)}$};
\node at (2.3,0.7) {\scriptsize$x_2^{(1)}$};
\node at (3.3,0.7) {\scriptsize$x_3^{(2)}$};
\node at (4.3,0.7) {\scriptsize$x_4^{(3)}$};
\node at (1.3,1.7) {\scriptsize$x_1^{(2)}$};
\node at (2.3,1.7) {\scriptsize$x_2^{(3)}$};
\node at (3.3,1.7) {\scriptsize$x_3^{(1)}$};
\node at (4.3,1.7) {\scriptsize$x_4^{(2)}$};
\node at (1.3,2.7) {\scriptsize$x_1^{(1)}$};
\node at (2.3,2.7) {\scriptsize$x_2^{(2)}$};
\node at (3.3,2.7) {\scriptsize$x_3^{(3)}$};
\node at (4.3,2.7) {\scriptsize$x_4^{(1)}$};
\draw[->, >=stealth,YellowGreen,line width=0.8mm]
(0,1) -- (1,1) -- (2,1) -- (2,2) -- (3,2) -- (3,3) -- (4,3) -- (4,3.25)
(4,0.25) -- (4,1) -- (5,1);
\draw[->, >=stealth,OliveGreen,line width=0.8mm]
(0,2) -- (1,2) -- (2,2) -- (2,3) -- (3,3) -- (3,3.25)
(3,0.25) -- (3,1) -- (4,1) -- (4,2) -- (5,2);
\node at (2.5,-0.5) {\scriptsize$x_1^{(3)}x_1^{(2)}$}; 
\end{tikzpicture}
 \end{minipage}
    \caption{Unique path family (left) that contributes to $\tau^{(3)}_8(\x_1, \x_2, \x_3, \x_4)$ and a path family (right) whose paths have the same sources and sinks}
    \label{fig:tau-why-degree}
\end{figure}
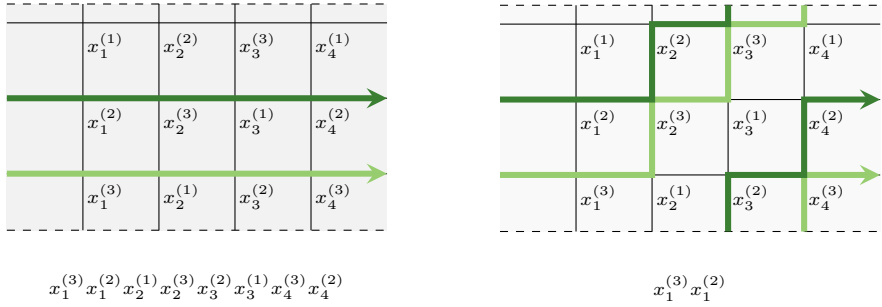
\end{example}

In order to state the definitions we need for the combinatorial interpretation of the $\sigma$ and $\bar{\sigma}$ functions, we must fist prove a lemma.

\begin{lemma}\label{lem:degree}
Fix a set of sources $S = \{s_1, s_2, \dots, s_k\}$ and a set of sinks $R = \{r_1, r_2, \dots, r_k\}$.  If $P,Q: S\to R$ are two families of highway paths in $N(n,m)$, then $\deg(P) \equiv_n \deg(Q)$.
\end{lemma}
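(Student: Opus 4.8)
The plan is to reduce the lemma to a one-path computation. The key claim is that every highway path $p\colon s\to r$ in $N(n,m)$ satisfies $\deg(p)\equiv r-s+1\pmod n$; granting this, if $P=\{p_i\colon s_i\to r_{\pi(i)}\}$ and $Q=\{q_i\colon s_i\to r_{\rho(i)}\}$ are two families with the same source set $S$ and sink set $R$ (the matchings $\pi,\rho$ need not agree), then
\[
\deg(P)=\sum_i\deg(p_i)\equiv\sum_i\bigl(r_{\pi(i)}-s_i+1\bigr)=|S|+\sum_{r\in R}r-\sum_{s\in S}s\pmod n ,
\]
and since the right-hand side does not see $\pi$, the same expression is obtained for $Q$, giving $\deg(P)\equiv\deg(Q)\pmod n$.

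To prove the claim, I would first show that $\deg(p)=m-z$, where $z$ is the number of up-edges of $p$. A highway path runs from a source (on the left boundary, left of column $1$) to a sink (on the right boundary, right of column $m$), so it uses exactly $m+1$ right-edges and its first and last edges are right-edges; moreover no two up-edges are consecutive, so the up-edges lie in $z$ distinct columns. A crossing contributes to $\wt(p)$ exactly when $p$ both enters and leaves it along right-edges. Reading $p$ column by column: a column with no up-edge contributes exactly one such crossing, while a column carrying an up-edge contributes none (there $p$ passes a ``right-then-up'' crossing and an ``up-then-right'' crossing). Hence $\deg(p)$ equals the number of up-edge-free columns, namely $m-z$.

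Next I would convert $z$ into the labels $s,r$ modulo $n$ using the conventions for $N(n,m)$: on a single wire the sink index exceeds the source index by $m-1$, and moving up one wire decreases the source index by $1$ (all mod $n$). Since $p$ climbs $z$ wires, its sink sits on the wire whose source index is $s-z$, so $r\equiv (s-z)+m-1\pmod n$, i.e.\ $z\equiv s+m-1-r\pmod n$; combined with $\deg(p)=m-z$ this yields $\deg(p)\equiv r-s+1\pmod n$. Equivalently, in the universal cover the identity $\deg(p)=\tld{r}-\tld{s}+1$ holds exactly for the integer labels $\tld{s}\equiv s$, $\tld{r}\equiv r$ of a lift of $p$; I would remark on this but only the congruence is needed. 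I do not expect a genuine obstacle here: the only delicate point is the column-by-column accounting in the second step --- that the first and last edges are right-edges, that the up-edges occupy distinct columns, and that an up-carrying column contributes $0$ rather than $1$ to the degree --- but these are precisely the highway hypothesis doing its job.
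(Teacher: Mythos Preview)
Your proposal is correct and follows essentially the same strategy as the paper: reduce to a single path, establish $\deg(p)\equiv r-s+1\pmod n$, and then sum over the family to get a quantity depending only on $S$, $R$, and $|S|$. The only cosmetic difference is that the paper computes the exact degree via a lift to the universal cover $\widetilde N(n,m)$ (starting from the straight-across path of degree $m$ and noting each unit drop in sink label costs one up-step), whereas you obtain $\deg(p)=m-z$ by a direct column-by-column count in $N(n,m)$ and then read off $z\bmod n$ from the wire labeling; both routes land on the same congruence.
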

\begin{proof}
We will first calculate $\deg(p)$ for a path $p: s\to r$ in $N(n,m)$ by considering its unique lift $\tilde{p}: s \to r-jn$.  Let $\tilde{p}_s$ be the path in $\nye{N}(n,m)$ that begins at source $s$, goes straight to the right, and ends at sink $s+m-1$.  We can see that $\deg(\tilde{p}_s)=m$. Given paths $\tilde{q}: s \to t-1$ and $\tilde{p}: s \to t$ in $\nye{N}(n,m)$, $\tilde{q}$ must have one more up step than $\tilde{p}$, and so $\deg(\tilde{q}) = \deg(\tilde{p})-1$.  This means we can calculate $$\deg(p)=m-(s+m-1-(r-jn))\equiv_n r-s+1.$$

Now consider a family of paths $P$ with source set $S = \{s_1, \dots, s_k\}$, $R = \{r_1, \dots, r_k\}$. By the previous paragraph, regardless of the pairing between sources and sinks, the sum of degrees of paths in $P$ is \[\deg(P) \equiv_n \sum_{i}r_i -  \sum_{j}s_j + k,\] which concludes our proof.
\end{proof}

Let $k \leq m(n-1)$, $S = [n]\setminus\{r+1\}$, and $R = [n]\setminus\{r-k\}$ and let $\cP^{(r)}_{\leq k}$ be the set of families $P: S \to R$ of noncrossing highway paths in $N(n,m)$ such that $\deg(P) \leq k$. In the proof of Theorem~\ref{thm:comb_tau}, for $k \leq m(n-1)$, we exhibited a family of noncrossing paths from $S$ to $R$ of degree $k$. So by Lemma \ref{lem:degree}, any family $P: S\to R$ of noncrossing highway in $N(n,m)$ has $\deg(P)=k-jn$ for some integer $j$. Thus we can define 
\[\wt_{\sigma_k}(P) = \left(\prod_{i=0}^{n-1}x_1^{(i)}\right)^j \wt(P) \text{ and } 
\wt_{\bar{\sigma}_k}(P) = \left(\prod_{i=0}^{n-1}x_m^{(i)}\right)^j \wt(P).\] 
\begin{theorem} \label{thm:comb_sigma}
If $k\leq m(n-1)$, then
\[\sigma_k^{(r)}(\x_1, \dots, \x_m)=
\sum_{P \in \cP^{(r)}_{\leq k}}
\wt_{\sigma_k}(P), \text{ \ \  }
\bar{\sigma}_k^{(r)}(\x_1, \dots, \x_m)=
\sum_{P \in \cP^{(r)}_{\leq k}}
\wt_{\bar{\sigma}_k}(P).
\]
\end{theorem}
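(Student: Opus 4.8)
We prove the identity for $\sigma_k^{(r)}$; the one for $\bar{\sigma}_k^{(r)}$ is entirely analogous, attaching the last loop on the right of the subnetwork on loops $1,\dots,m-1$. The plan is to peel off loop $1$ of $N(n,m)$ and reduce to \thmref{thm:comb_tau}. Recall the defining formula
\[
\sigma_k^{(r)}(\x_1,\dots,\x_m)=\sum_{i=0}^{k}\Bigl(\prod_{t=0}^{i-1}x_1^{(r-t)}\Bigr)\tau_{k-i}^{(r-i)}(\x_2,\dots,\x_m).
\]
By \thmref{thm:comb_tau} and Remark~\ref{rem:altdefforPk}, applied to the subnetwork on loops $2,\dots,m$ (with lower indices relabelled $2,\dots,m$), the factor $\tau_{k-i}^{(r-i)}(\x_2,\dots,\x_m)$ equals $\sum_Q\wt(Q)$ over noncrossing highway families $Q$ in loops $2,\dots,m$ with source set $[n]\setminus\{r-i+1\}$, sink set $R=[n]\setminus\{r-k\}$, and $\deg(Q)=k-i$, with no pairing between sources and sinks prescribed.

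The crux is a loop-attachment lemma: for any noncrossing highway family $Q$ in loops $2,\dots,m$ with source set $[n]\setminus\{g\}$ and any $a$ with $0\le a\le n-1$, there is a \emph{unique} extension of $Q$ across loop $1$ to a noncrossing highway family $P$ in $N(n,m)$ in which exactly $a$ of the $n-1$ paths take a through-step at loop $1$. Once $a$ is fixed, the cyclic noncrossing condition forces the $n-1-a$ zigzagging paths to be the block of consecutive wires immediately preceding the gap (in the upward direction zigzags move): moving these up one step fills the gap $g$ and opens a new gap at the opposite end of the block, while the other $a$ paths go straight through, and no crossing appears since the cyclic order of the paths is preserved (paths may touch at corners). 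Tracking the upper indices through loop $1$, one finds $\deg(P)=\deg(Q)+a$, that $P$ has source set $[n]\setminus\{g+a\}$, and that the $a$ through-paths meet precisely the loop-$1$ crossings weighted $x_1^{(g)},x_1^{(g+1)},\dots,x_1^{(g+a-1)}$, so $\wt(P)=\bigl(\prod_{t=0}^{a-1}x_1^{(g+t)}\bigr)\wt(Q)$. Taking $g=r-i+1$ and $a=i$ for $0\le i\le n-1$, the source set becomes $[n]\setminus\{r+1\}=S$, the degree becomes $k$, and the prefactor becomes $\prod_{t=0}^{i-1}x_1^{(r-t)}$, so the $i$-th summand above matches the degree-$k$ families $P\colon S\to R$ in $N(n,m)$.

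For general $i$, write $i=qn+a$ with $0\le a\le n-1$; since upper indices are read modulo $n$, $\prod_{t=0}^{i-1}x_1^{(r-t)}=\bigl(\prod_{\ell=0}^{n-1}x_1^{(\ell)}\bigr)^{q}\prod_{t=0}^{a-1}x_1^{(r-t)}$ and $\tau_{k-i}^{(r-i)}=\tau_{k-i}^{(r-a)}$. Applying the loop-attachment lemma with $g=r-a+1$ and this $a$ to the families $Q$ counted by $\tau_{k-i}^{(r-a)}(\x_2,\dots,\x_m)$ (of degree $k-i$) produces exactly the noncrossing families $P\colon S\to R$ in $N(n,m)$ with $\deg(P)=k-qn$, and then $\bigl(\prod_{\ell}x_1^{(\ell)}\bigr)^{q}\prod_{t=0}^{a-1}x_1^{(r-t)}\wt(Q)=\bigl(\prod_{\ell}x_1^{(\ell)}\bigr)^{q}\wt(P)=\wt_{\sigma_k}(P)$, the matching term of $\sigma_k^{(r)}$. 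Conversely, every $P\in\cP^{(r)}_{\le k}$ has $\deg(P)=k-jn$ for a unique $j\ge 0$; reading off the number $a$ of its through-steps at loop $1$, restricting to loops $2,\dots,m$, and setting $i=jn+a$ (so $0\le i\le k$, since the restriction has nonnegative degree $k-i$) recovers the unique pair $(i,Q)$. Hence $(i,Q)\mapsto P$ is a bijection, and summing over $i$ gives $\sigma_k^{(r)}(\x_1,\dots,\x_m)=\sum_{P\in\cP^{(r)}_{\le k}}\wt_{\sigma_k}(P)$. The main obstacle is the loop-attachment lemma: proving the forced extension is genuinely noncrossing, and --- more laboriously --- tracking the upper indices so that the shift $g\mapsto g+a$ and the monomial $x_1^{(g)}\cdots x_1^{(g+a-1)}$ come out exactly as claimed, which is what makes the source sets, sink sets, and weights match term by term.
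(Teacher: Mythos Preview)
Your argument is correct, but it takes a genuinely different route from the paper's proof. The paper does not peel off loop $1$ at all: instead it uses the alternative expansion
\[
\sigma_k^{(r)}(\x_1,\dots,\x_m)=\sum_{j=0}^{a}\Bigl(\prod_{i=0}^{n-1}x_1^{(i)}\Bigr)^{j}\tau_{k-jn}^{(r)}(\x_1,\dots,\x_m),
\]
which follows immediately from the description of $\sigma$ as ``$\tau$ with the $\x_1$-multiplicity cap removed,'' and then applies \thmref{thm:comb_tau} directly on the full network $N(n,m)$ to each $\tau_{k-jn}^{(r)}$. Combined with the easy decomposition $\cP_{\le k}^{(r)}=\bigcup_{j=0}^{a}\cP_{k-jn}^{(r)}$, this finishes the proof in a few lines with no loop-attachment analysis. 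Your approach instead sticks to the defining expansion of $\sigma$ in terms of $\tau$'s on $\x_2,\dots,\x_m$, which forces you to prove the extra loop-attachment lemma (uniqueness of the noncrossing extension across loop $1$ with a prescribed number of through-steps) and to track the index shifts carefully. What you gain is a more constructive, self-contained argument that never invokes the alternative identity for $\sigma$; what the paper gains is brevity, since that identity absorbs all of the bookkeeping your loop-attachment lemma does by hand. One minor point: your verbal description of which block of wires zigzags (``immediately preceding the gap $g$'') is slightly ambiguous about whether the input or output gap is meant; the correct statement is that the zigzag block is the run of input wires immediately below the \emph{input} gap, and this is exactly what your index computation $g\mapsto g+a$ encodes.
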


\begin{proof}
Let $k = an+b$ where $0 \leq b < n$.

We will show that $\cP_{\leq k}^{(r)} = \bigcup_{j = 0}^{a} \cP_{k-jn}^{(r)}$.
By checking that the set of sources and sinks that define $\cP_{k-jn}^{(r)}$ are exactly $S$ and $R$, we note that $\bigcup_{j = 0}^{a} \cP_{k-jn}^{(r)} \subseteq \cP_{\leq k}^{(r)}$. Since the possible degrees for path families in $\cP_{\leq k}^{(r)}$ are precisely $k-jn$ where $0 \leq j \leq a$, by Remark \ref{rem:altdefforPk}, we have $\cP_{\leq k}^{(r)} \subseteq \bigcup_{j = 0}^{a} \cP_{k-jn}^{(r)}$. Therefore,
\begin{align*}
    \sigma_k^{(r)}(\x_1, \dots, \x_m)  &=\sum_{j=0}^a\left(\prod_{i=0}^{n-1}x_1^{(i)}\right)^j
\sum_{P \in \cP^{(r)}_{k-jn}} \wt(P) \\ 
&=\sum_{j=0}^a\sum_{P \in \cP^{(r)}_{k-jn}} \wt_{\sigma_k}(P) \\
&= \sum_{P \in \cP^{(r)}_{\leq k}} \wt_{\sigma_k}(P).
\end{align*}

The proof of the second part of the theorem concerning $\bar{\sigma}$ is entirely analogous.
\end{proof}

\begin{remark}
When $k > m(n-1)$, we can write 
\[\sigma_{k}^{(r)}(\x_1, \dots, \x_m) = \prod_{t = 0}^{k-m(n-1)-1}x_1^{(r-t)}\sigma_{m(n-1)}^{(r-k+m)}(\x_1, \dots, \x_m)\]
and 
\[\bar{\sigma}_{k}^{(r)}(\x_1, \dots, \x_m) = \bar{\sigma}_{m(n-1)}^{(r)}(\x_1, \dots, \x_m) \prod_{t = 0}^{k-m(n-1)-1}x_m^{(r+m-t)}.\] Thus we can adjust the weight of every family of paths that arises as a term of $\sigma_{m(n-1)}^{(r-k+m)}(\x_1, \dots, \x_m)$ or $\bar{\sigma}_{m(n-1)}^{(r-k+m)}(\x_1, \dots, \x_m)$ monomial to obtain a combinatorial interpretation for the case where $k > m(n-1)$.
\end{remark}

\begin{example} We apply Theorem \ref{thm:comb_sigma} to $\sigma_5^{(3)}(\x_1, \x_2)$ with $n = 4$. We have $k = 5, a = 1, b = 1, r = 3$. So $S = \{1,2,3\}$ and $R = \{1,3,4\}$. There are four possible path families $P_1, P_2, P_3, P_4$ with source set $S$ and sink set $R$ (see Figure \ref{fig:tau-example} and Figure \ref{fig:sigma-example}). We can compute $\wt_{\sigma}(P_i)$ for each path family: for $i = 1,2$, since $\deg(P_i) = 5$, $\wt_{\sigma_5}(P_i) = \wt(P_i)$; for $i = 3,4$, since $\deg(P_i) = 1$, $\wt_{\sigma_5}(P_i) = x_1^{(3)}x_1^{(2)}x_1^{(1)}x_1^{(4)}\wt(P_i)$. Indeed, 
\[\sigma_5^{(3)}(\x_1, \x_2) = x_1^{(3)}x_1^{(2)}x_2^{(1)}x_2^{(4)}x_2^{(3)}+
x_1^{(3)}x_1^{(2)}x_1^{(1)}x_2^{(4)}x_2^{(3)}+
x_1^{(3)}x_1^{(2)}x_1^{(1)}x_1^{(4)}x_1^{(3)}+
x_1^{(3)}x_1^{(2)}x_1^{(1)}x_1^{(4)}x_2^{(3)}.
\]
\begin{figure}[H]
    \centering
    \begin{minipage}{0.3\textwidth}
    \begin{tikzpicture}
\fill[gray!10!white] (0,0.25) rectangle (3,4.25);
\path
(0,1) edge (3,1)
(0,2) edge (3,2)
(0,3) edge (3,3)
(0,4) edge (3,4)
(1,0.25) edge (1,4.25)
(2,0.25) edge (2,4.25);
\path[dashed]
(0,0.25) edge (3,0.25)
(0,4.25) edge (3,4.25);
\node at (1.3,0.7) {\scriptsize$x_1^{(4)}$};
\node at (2.3,0.7) {\scriptsize$x_2^{(1)}$};
\node at (1.3,1.7) {\scriptsize$x_1^{(3)}$};
\node at (2.3,1.7) {\scriptsize$x_2^{(4)}$};
\node at (1.3,2.7) {\scriptsize$x_1^{(2)}$};
\node at (2.3,2.7) {\scriptsize$x_2^{(3)}$};
\node at (1.3,3.7) {\scriptsize$x_1^{(1)}$};
\node at (2.3,3.7) {\scriptsize$x_2^{(2)}$};
\draw[->, >=stealth,YellowGreen,line width=0.8mm]
(0,2) -- (1,2) -- (2,2) -- (2,3) -- (3,3);
\draw[->, >=stealth,OliveGreen,line width=0.8mm]
(0,3) -- (1,3) -- (1,4) -- (2,4) -- (2,4.25)
(2,0.25) -- (2,1) -- (3,1);
\draw[->, >=stealth,Green,line width=0.8mm]
(0,4) -- (1,4) -- (1,4.25)
(1,0.25) -- (1,1) -- (2,1) -- (2,2) -- (3,2);
\node at (1.5,-0.5) {\scriptsize$\wt(P_3) =x_1^{(3)}$}; 
\end{tikzpicture}
    \end{minipage}
    \begin{minipage}{0.3\textwidth}
\begin{tikzpicture}
\fill[gray!10!white] (0,0.25) rectangle (3,4.25);
\path
(0,1) edge (3,1)
(0,2) edge (3,2)
(0,3) edge (3,3)
(0,4) edge (3,4)
(1,0.25) edge (1,4.25)
(2,0.25) edge (2,4.25);
\path[dashed]
(0,0.25) edge (3,0.25)
(0,4.25) edge (3,4.25);
\node at (1.3,0.7) {\scriptsize$x_1^{(4)}$};
\node at (2.3,0.7) {\scriptsize$x_2^{(1)}$};
\node at (1.3,1.7) {\scriptsize$x_1^{(3)}$};
\node at (2.3,1.7) {\scriptsize$x_2^{(4)}$};
\node at (1.3,2.7) {\scriptsize$x_1^{(2)}$};
\node at (2.3,2.7) {\scriptsize$x_2^{(3)}$};
\node at (1.3,3.7) {\scriptsize$x_1^{(1)}$};
\node at (2.3,3.7) {\scriptsize$x_2^{(2)}$};
\draw[->, >=stealth,YellowGreen,line width=0.8mm]
(0,2) -- (1,2) -- (1,3) -- (2,3) -- (3,3);
\draw[->, >=stealth,OliveGreen,line width=0.8mm]
(0,3) -- (1,3) -- (1,4) -- (2,4) -- (2,4.25)
(2,0.25) -- (2,1) -- (3,1);
\draw[->, >=stealth,Green,line width=0.8mm]
(0,4) -- (1,4) -- (1,4.25)
(1,0.25) -- (1,1) -- (2,1) -- (2,2) -- (3,2);
\node at (1.5,-0.5) {\scriptsize$\wt(P_4) = x_2^{(3)}$};   
\end{tikzpicture}
 \end{minipage}
    \caption{The two path families whose weights sum to $\tau^{(3)}_1(\x_1, \x_2)$}
    \label{fig:sigma-example}
\end{figure}
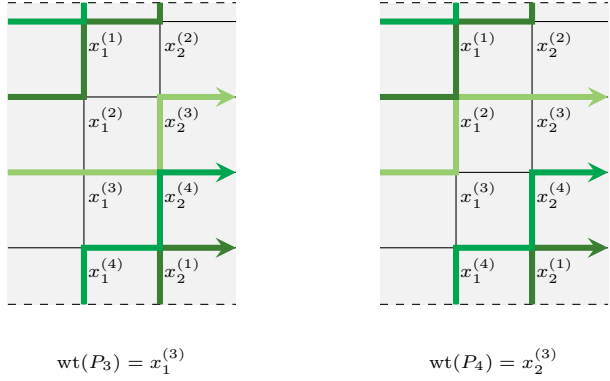

\end{example}
Based on the interpretations of $\sigma$ and $\bar{\sigma}$ functions, we obtain a combinatorial interpretation of the $\Omega$ functions.

Let $S = [n]\setminus\{r+1\}$ and $R = [n] \setminus \{r+m-1\}$ be the sources and sinks of $P \in \cP^{(r)}_{(m-1)(n-1)}$ in $N(n,m)$. Given a family of noncrossing highway paths from $S$ to $R$, cut the network along the middle of the $\x_k$ and $\x_{k+1}$ vertical loops, resulting in two families $P_1$ and $P_2$ of noncrossing highway paths in $N(n,k)$ and $N(n,m-k)$ respectively.  Then there exists $\ell$ such that $R' = [n] \setminus \{r+k-1-\ell\}$ is the sink set of $P_1$ and $S' = [n] \setminus \{r+k-\ell\}$ is the source set of $P_2$. Since $(k-1)(n-1)+\ell \leq k(n-1)$, there exists a path family from $S$ to $R'$ with degree $(k-1)(n-1)+\ell$. So $\deg(P_1) \equiv_n (k-1)(n-1)+\ell$. In order for $\deg(P_1) \leq k(n-1)$, we must have $\deg(P_1) = (k-1)(n-1)+\ell - j_1n$ for some nonnegative integer $j_1$. Similarly, $\deg(P_2)=(n-1)(m-k)-\ell-j_2n$ for some nonnegative integer $j_2$. Let \[\wt_{\Omega_k}(P) = \left(\prod_{i=0}^{n-1}x_1^{(i)}\right)^{j_1} \wt(P) \left(\prod_{i=0}^{n-1}x_m^{(i)}\right)^{j_2}.\]

\begin{theorem} \label{thm:comb_omega}
For $1 \leq k \leq m-1$,
\[\Omega_{k}^{(r)}(\x_1, \dots, \x_m) = \sum_{
P \in \cP^{(r)}_{\leq (m-1)(n-1)}} \wt_{\Omega_k}(P).\]
\end{theorem}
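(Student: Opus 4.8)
The plan is to prove \thmref{thm:comb_omega} by a cut-and-glue bijection, building directly on the combinatorial interpretations of $\sigma$ and $\bar\sigma$ from \thmref{thm:comb_sigma}. Given a noncrossing highway family $P\in\cP^{(r)}_{\leq(m-1)(n-1)}$ in $N(n,m)$, I would cut $N(n,m)$ along a vertical segment placed between the $\x_k$ and $\x_{k+1}$ loops; as in the discussion preceding the theorem this splits $P$ into a noncrossing highway family $P_1$ on $N(n,k)$ and a noncrossing highway family $P_2$ on the copy of $N(n,m-k)$ whose loop weights are the $\x_{k+1},\dots,\x_m$ variables. The structural facts to record are: (i) $P_1$ and $P_2$ occupy the same $n-1$ horizontal wires at the cut, so $P$ is reconstructed by concatenating them wire by wire; (ii) because the crossing of any wire with loop $k+1$ carries an index one larger than its crossing with loop $k$, if $P_1$ has sink set $R'=[n]\setminus\{d\}$ then $P_2$ has source set $S'=[n]\setminus\{d+1\}$; and (iii) writing $d=r+k-1-\ell$ with $0\le\ell\le n-1$, one gets $R'=[n]\setminus\{r-((n-1)(k-1)+\ell)\}$ and $S'=[n]\setminus\{(r+k-1-\ell)+1\}$ modulo $n$, which are exactly the source/sink sets of the families counted by $\sigma_{(n-1)(k-1)+\ell}^{(r)}(\x_1,\dots,\x_k)$ and $\bar\sigma_{(n-1)(m-k)-\ell}^{(r+k-1-\ell)}(\x_{k+1},\dots,\x_m)$ in \thmref{thm:comb_sigma}. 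Conversely, for a fixed $\ell$, any pair $(P_1,P_2)$ of such families glues to a noncrossing highway family on $N(n,m)$: concatenation along a vertical line cannot create a crossing, and since every path of $P_1$ enters its sink moving rightward and every path of $P_2$ leaves its source moving rightward, no two consecutive up-steps appear. Thus $P\mapsto(\ell,P_1,P_2)$ should be a bijection between $\cP^{(r)}_{\leq(m-1)(n-1)}$ and $\bigsqcup_{\ell=0}^{n-1}\cP_1^{(\ell)}\times\cP_2^{(\ell)}$, where $\cP_1^{(\ell)}$ and $\cP_2^{(\ell)}$ denote the families counted by the $\sigma$ and $\bar\sigma$ factors above.

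The second ingredient is degree bookkeeping. No vertex of $N(n,m)$ lies on the cut, and a path of $P$ runs straight through a vertex either among the first $k$ loops or among the last $m-k$, so $\deg(P)=\deg(P_1)+\deg(P_2)$. From the discussion preceding the theorem, $\deg(P_1)\equiv(n-1)(k-1)+\ell$ and $\deg(P_2)\equiv(n-1)(m-k)-\ell$ modulo $n$, with both nonnegative. I would then note that the hypothesis $\deg(P)\le(m-1)(n-1)$ automatically forces $\deg(P_1)\le(n-1)(k-1)+\ell$ and $\deg(P_2)\le(n-1)(m-k)-\ell$: if $\deg(P_1)$ exceeded $(n-1)(k-1)+\ell$ it would do so by at least $n$, making $\deg(P_2)<0$. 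Writing $\deg(P_1)=(n-1)(k-1)+\ell-j_1n$ and $\deg(P_2)=(n-1)(m-k)-\ell-j_2n$ with $j_1,j_2\ge0$ as in the definition of $\wt_{\Omega_k}$, these are precisely the exponents produced when \thmref{thm:comb_sigma} is applied to $P_1$ and to $P_2$; and since $\wt(P)=\wt(P_1)\wt(P_2)$ we obtain
\[\wt_{\Omega_k}(P)=\Big(\prod_{i=0}^{n-1}x_1^{(i)}\Big)^{j_1}\wt(P_1)\cdot\Big(\prod_{i=0}^{n-1}x_m^{(i)}\Big)^{j_2}\wt(P_2)=\wt_{\sigma_{(n-1)(k-1)+\ell}}(P_1)\,\wt_{\bar\sigma_{(n-1)(m-k)-\ell}}(P_2).\]

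Finally, I would assemble the pieces: summing the last identity over the bijection and applying \thmref{thm:comb_sigma} to each factor,
\[\sum_{P\in\cP^{(r)}_{\leq(m-1)(n-1)}}\wt_{\Omega_k}(P)=\sum_{\ell=0}^{n-1}\Big(\sum_{P_1\in\cP_1^{(\ell)}}\wt_{\sigma_{(n-1)(k-1)+\ell}}(P_1)\Big)\Big(\sum_{P_2\in\cP_2^{(\ell)}}\wt_{\bar\sigma_{(n-1)(m-k)-\ell}}(P_2)\Big)=\sum_{\ell=0}^{n-1}\sigma_{(n-1)(k-1)+\ell}^{(r)}(\x_1,\dots,\x_k)\,\bar\sigma_{(n-1)(m-k)-\ell}^{(r+k-1-\ell)}(\x_{k+1},\dots,\x_m),\]
which is $\Omega_k^{(r)}(\x_1,\dots,\x_m)$ by definition. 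The main obstacle I anticipate is the geometry at the cut: carefully justifying that cutting a noncrossing highway family yields two noncrossing highway families whose missing source/sink indices differ by exactly one, that every compatible pair glues back without crossings or consecutive up-steps (so the correspondence is genuinely a bijection), and that the degree bounds built into \thmref{thm:comb_sigma} hold automatically rather than needing to be imposed; the remainder is routine index chasing modulo $n$.
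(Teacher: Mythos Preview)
Your approach is essentially the paper's own: cut $P$ between the $\x_k$ and $\x_{k+1}$ loops, read off $\ell$ from the missing sink at the cut, apply \thmref{thm:comb_sigma} to each piece, and verify $\wt_{\Omega_k}(P)=\wt_{\sigma_{(n-1)(k-1)+\ell}}(P_1)\,\wt_{\bar\sigma_{(n-1)(m-k)-\ell}}(P_2)$.

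One small correction in your degree bookkeeping: the claim that $\deg(P)\le(m-1)(n-1)$ alone forces $\deg(P_1)\le(n-1)(k-1)+\ell$ is not valid as stated. If $(n-1)(m-k)-\ell\ge n$, then $\deg(P_1)=(n-1)(k-1)+\ell+n$ would still leave $\deg(P_2)\ge0$, so your contradiction does not materialize. The correct reason---already recorded in the paragraph preceding the theorem, which you cite for the congruences---is simply the network bound $\deg(P_1)\le k(n-1)$: since $(n-1)(k-1)+\ell+n=k(n-1)+\ell+1>k(n-1)$ for every $\ell\ge0$, the next congruence class up is unattainable. With that fix your argument goes through and matches the paper's.
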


\begin{proof}
Recall that
\[\Omega_{k}^{(r)}(\x_1, \dots, \x_m) = \sum_{\ell=0}^{n-1}\sigma^{(r)}_{(n-1)(k-1)+\ell}(\x_1,\dots,\x_k)\bar{\sigma}^{(r+k-1-\ell)}_{(n-1)(m-k)-\ell}(\x_{k+1},\dots,\x_m).\] 
Consider the term 
\[\sigma^{(r)}_{(n-1)(k-1)+\ell}(\x_1,\dots,\x_k)
\bar{\sigma}^{(r+k-1-\ell)}_{(n-1)(m-k)-\ell}(\x_{k+1},\dots,\x_m)\] 
for some $0 \leq \ell \leq n-1$. 

By Theorem \ref{thm:comb_sigma}, $\sigma^{(r)}_{(n-1)(k-1)+\ell}(\x_1,\dots,\x_k)$ is the generating function for families of noncrossing highway paths starting from $S = [n] \setminus\{r+1\}$ and ending at $[n] \setminus \{r+k-1-\ell\}$ with degree at most $(n-1)(k-1)+\ell$, 
and $\bar{\sigma}^{(r+k-1-\ell)}_{(n-1)(m-k)-\ell}(\x_{k+1},\dots,\x_m)$ is the generating function for families of noncrossing highway paths starting at $[n] \setminus \{r+k-\ell\}$ and ending at $R = [n] \setminus \{r+m-1\}$ with degree at most $(n-1)(m-k)-\ell$. Thus, for each $\ell$, noncrossing path families corresponding to $\sigma^{(r)}_{(n-1)(k-1)+\ell}(\x_1,\dots,\x_k)$ and $\bar{\sigma}^{(r+k-1-\ell)}_{(n-1)(m-k)-\ell}(\x_{k+1},\dots,\x_m)$ connect between the $\x_k$ and $\x_{k+1}$ demarcation. Therefore, each term in $\Omega_k$ corresponds to some path family $P: S \to R$. 

Conversely, to define $\wt_{\Omega_k}(P)$, we have already shown that any path family $P: S \to R$ breaks up into $P_1 \in \cP^{(r)}_{\leq (k-1)(n-1)+\ell}$ and $P_2 \in \cP^{(r+k-1-\ell)}_{\leq (m-k)(n-1)-\ell}$ for a unique $0 \leq \ell \leq n-1$.

Lastly, one can check that \[\wt_{\Omega_k}(P) = \wt_{\sigma_{(n-1)(k-1)+\ell}}(P_1) \wt_{\bar{\sigma}_{(n-1)(m-k)-\ell}}(P_2).\]
\end{proof}
\begin{remark}
Since the sum is always over $P \in \cP_{\leq (m-1)(n-1)}^{(r)}$, this theorem implies that the number of terms in $\Omega_{k}^{(r)}(\x_1, \dots, \x_m)$ is constant for different $k$. When $k = 1$, $j_1$ must be zero for any path family, which implies that $\wt_{\Omega_1}(P) = \wt_{\bar{\sigma}_{(n-1)(m-1)}}(P)$. Similarly, $\wt_{\Omega_{m-1}}(P) = \wt_{\sigma_{(n-1)(m-1)}}(P)$.
\end{remark}
\begin{example} Consider $\Omega_k^{(3)}(\x_1, \x_2, \x_3,\x_4)$ where $n = 3$, which consists of monomials of length $(n-1)(m-1) = 6$. We calculate that $S = \{2,3\}$ and $R = \{1,2\}$. Two path families $P, Q: S \to R$ are depicted below. Since $\deg(P) = 6$, $\wt_{\Omega_k}(P) = \wt(P)$.
\begin{figure}
    \centering
\begin{tikzpicture}
\fill[gray!10!white] (0,0.25) rectangle (5,3.25);
\path
(0,1) edge (5,1)
(0,2) edge (5,2)
(0,3) edge (5,3)
(1,0.25) edge (1,3.25)
(2,0.25) edge (2,3.25)
(3,0.25) edge (3,3.25)
(4,0.25) edge (4,3.25);
\path[dashed]
(0,0.25) edge (5,0.25)
(0,3.25) edge (5,3.25);

\node at (1.3,0.7) {\scriptsize$x_1^{(3)}$};
\node at (2.3,0.7) {\scriptsize$x_2^{(1)}$};
\node at (3.3,0.7) {\scriptsize$x_3^{(2)}$};
\node at (4.3,0.7) {\scriptsize$x_4^{(3)}$};
\node at (1.3,1.7) {\scriptsize$x_1^{(2)}$};
\node at (2.3,1.7) {\scriptsize$x_2^{(3)}$};
\node at (3.3,1.7) {\scriptsize$x_3^{(1)}$};
\node at (4.3,1.7) {\scriptsize$x_4^{(2)}$};
\node at (1.3,2.7) {\scriptsize$x_1^{(1)}$};
\node at (2.3,2.7) {\scriptsize$x_2^{(2)}$};
\node at (3.3,2.7) {\scriptsize$x_3^{(3)}$};
\node at (4.3,2.7) {\scriptsize$x_4^{(1)}$};
\draw[->, >=stealth,YellowGreen,line width=0.8mm]
(0,1) -- (1,1) -- (2,1) -- (3,1) -- (4,1) -- (4,2) -- (5,2);
\draw[->, >=stealth,OliveGreen,line width=0.8mm]
(0,2) -- (1,2) -- (2,2) -- (3,2) -- (3,3) -- (4,3) -- (5,3);
\end{tikzpicture}
    \caption{$P:S \to R$ such that $\wt_{\Omega_k}(P) = \wt(P)$ for $k = 1,2,3$.}
    \label{fig:omega-example-2}
\end{figure}
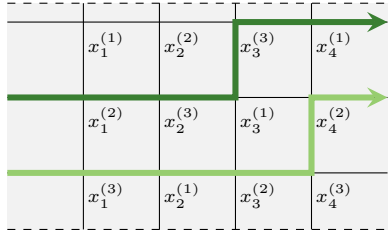 
On the other hand, the path family $Q$ has $\deg(Q) = 3$, so we expect that $j_1 + j_2 = 1$. We will explicitly calculate $j_1$ and $j_2$ when $k = 2$ and $\wt_{\Omega_2}(Q)$. When we cut between the second and the third vertical loop, the sink set of the path family restricted to the network on the left is $\{1,2\}$. Therefore $r + k-1 - \ell = 3$, which implies that $\ell = r+k-1-3 = 3+2-1-3 = 1$. Let the path families of the left and right networks be $Q_1$ and $Q_2$. Since $\deg(Q_1) = 3 = (n-1)(k-1)+\ell$ and $\deg(Q_2) = 0 = (n-1)(m-k)-\ell-3$, we have $j_1 = 0$ and $j_2 = 1$. Therefore, $\wt_{\Omega_2}(Q) = \wt(Q)x_4^{(3)}x_4^{(2)}x_4^{(1)} = x_1^{(3)}x_1^{(2)}x_2^{(1)} x_4^{(3)}x_4^{(2)}x_4^{(1)}$. 
\begin{figure}[H]
\centering
\begin{minipage}{0.3\textwidth}
\begin{tikzpicture}[scale=0.85]
\fill[gray!10!white] (0,0.25) rectangle (5,3.25);
\path
(0,1) edge (5,1)
(0,2) edge (5,2)
(0,3) edge (5,3)
(1,0.25) edge (1,3.25)
(2,0.25) edge (2,3.25)
(3,0.25) edge (3,3.25)
(4,0.25) edge (4,3.25);
\path[dashed]
(0,0.25) edge (5,0.25)
(0,3.25) edge (5,3.25);
\path[dashed, Blue,line width = 0.4mm]
(1.55,0) edge (1.55,3.5);
\node at (1.3,0.7) {\scriptsize$x_1^{(3)}$};
\node at (2.3,0.7) {\scriptsize$x_2^{(1)}$};
\node at (3.3,0.7) {\scriptsize$x_3^{(2)}$};
\node at (4.3,0.7) {\scriptsize$x_4^{(3)}$};
\node at (1.3,1.7) {\scriptsize$x_1^{(2)}$};
\node at (2.3,1.7) {\scriptsize$x_2^{(3)}$};
\node at (3.3,1.7) {\scriptsize$x_3^{(1)}$};
\node at (4.3,1.7) {\scriptsize$x_4^{(2)}$};
\node at (1.3,2.7) {\scriptsize$x_1^{(1)}$};
\node at (2.3,2.7) {\scriptsize$x_2^{(2)}$};
\node at (3.3,2.7) {\scriptsize$x_3^{(3)}$};
\node at (4.3,2.7) {\scriptsize$x_4^{(1)}$};
\draw[->, >=stealth,YellowGreen,line width=0.8mm]
(0,1) -- (1,1) -- (2,1) -- (3,1) -- (3,2) -- (4,2) -- (4,3) -- (5,3);
\draw[->, >=stealth,OliveGreen,line width=0.8mm]
(0,2) -- (1,2) -- (2,2) -- (2,3) -- (3,3) -- (3,3.25)
(3,0.25) -- (3,1) -- (4,1) -- (4,2) -- (5,2);
\node at (2.5,-0.5) {\scriptsize $k = 1, \ell = 2$, $j_1 = 0$, $j_2 = 1$};
\end{tikzpicture}
\end{minipage}
\begin{minipage}{0.3\textwidth}
\begin{tikzpicture}[scale=0.85]
\fill[gray!10!white] (0,0.25) rectangle (5,3.25);
\path
(0,1) edge (5,1)
(0,2) edge (5,2)
(0,3) edge (5,3)
(1,0.25) edge (1,3.25)
(2,0.25) edge (2,3.25)
(3,0.25) edge (3,3.25)
(4,0.25) edge (4,3.25);
\path[dashed]
(0,0.25) edge (5,0.25)
(0,3.25) edge (5,3.25);
\path[dashed, Blue,line width = 0.4mm]
(2.55,0) edge (2.55,3.5);
\node at (1.3,0.7) {\scriptsize$x_1^{(3)}$};
\node at (2.3,0.7) {\scriptsize$x_2^{(1)}$};
\node at (3.3,0.7) {\scriptsize$x_3^{(2)}$};
\node at (4.3,0.7) {\scriptsize$x_4^{(3)}$};
\node at (1.3,1.7) {\scriptsize$x_1^{(2)}$};
\node at (2.3,1.7) {\scriptsize$x_2^{(3)}$};
\node at (3.3,1.7) {\scriptsize$x_3^{(1)}$};
\node at (4.3,1.7) {\scriptsize$x_4^{(2)}$};
\node at (1.3,2.7) {\scriptsize$x_1^{(1)}$};
\node at (2.3,2.7) {\scriptsize$x_2^{(2)}$};
\node at (3.3,2.7) {\scriptsize$x_3^{(3)}$};
\node at (4.3,2.7) {\scriptsize$x_4^{(1)}$};
\draw[->, >=stealth,YellowGreen,line width=0.8mm]
(0,1) -- (1,1) -- (2,1) -- (3,1) -- (3,2) -- (4,2) -- (4,3) -- (5,3);
\draw[->, >=stealth,OliveGreen,line width=0.8mm]
(0,2) -- (1,2) -- (2,2) -- (2,3) -- (3,3) -- (3,3.25)
(3,0.25) -- (3,1) -- (4,1) -- (4,2) -- (5,2);
\node at (2.5,-0.5) {\scriptsize $k = 2, \ell = 1$, $j_1 = 0$, $j_2 = 1$};
\end{tikzpicture}
 \end{minipage}
 \begin{minipage}{0.3\textwidth}
\begin{tikzpicture}[scale=0.85]
\fill[gray!10!white] (0,0.25) rectangle (5,3.25);
\path
(0,1) edge (5,1)
(0,2) edge (5,2)
(0,3) edge (5,3)
(1,0.25) edge (1,3.25)
(2,0.25) edge (2,3.25)
(3,0.25) edge (3,3.25)
(4,0.25) edge (4,3.25);
\path[dashed]
(0,0.25) edge (5,0.25)
(0,3.25) edge (5,3.25);
\path[dashed, Blue,line width = 0.4mm]
(3.55,0) edge (3.55,3.5);
\node at (1.3,0.7) {\scriptsize$x_1^{(3)}$};
\node at (2.3,0.7) {\scriptsize$x_2^{(1)}$};
\node at (3.3,0.7) {\scriptsize$x_3^{(2)}$};
\node at (4.3,0.7) {\scriptsize$x_4^{(3)}$};
\node at (1.3,1.7) {\scriptsize$x_1^{(2)}$};
\node at (2.3,1.7) {\scriptsize$x_2^{(3)}$};
\node at (3.3,1.7) {\scriptsize$x_3^{(1)}$};
\node at (4.3,1.7) {\scriptsize$x_4^{(2)}$};
\node at (1.3,2.7) {\scriptsize$x_1^{(1)}$};
\node at (2.3,2.7) {\scriptsize$x_2^{(2)}$};
\node at (3.3,2.7) {\scriptsize$x_3^{(3)}$};
\node at (4.3,2.7) {\scriptsize$x_4^{(1)}$};
\draw[->, >=stealth,YellowGreen,line width=0.8mm]
(0,1) -- (1,1) -- (2,1) -- (3,1) -- (3,2) -- (4,2) -- (4,3) -- (5,3);
\draw[->, >=stealth,OliveGreen,line width=0.8mm]
(0,2) -- (1,2) -- (2,2) -- (2,3) -- (3,3) -- (3,3.25)
(3,0.25) -- (3,1) -- (4,1) -- (4,2) -- (5,2);
\node at (2.5,-0.5) {\scriptsize $k = 3, \ell = 2$, $j_1 = 1$, $j_2 = 0$};
\end{tikzpicture}
 \end{minipage}
    \caption{A path family $Q: S \to R$ such that $\wt_{\Omega_1}(Q) = \wt_{\Omega_2}(Q) \neq \wt_{\Omega_3}(Q)$}
    \label{fig:omega-example-1}
\end{figure}
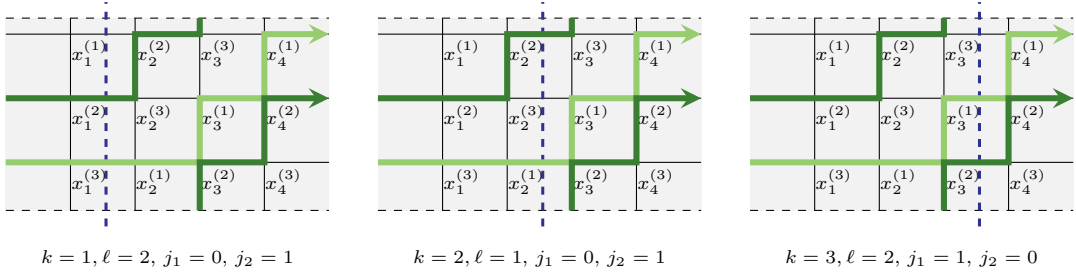
\end{example}

\section{Questions}
We conclude with some questions for future work.

\vspace{0.1in}
\noindent\textbf{Question 1.}  What are the explicit formulas for general permutations?
\vspace{0.1in}

While the results of the present paper are limited to special permutations, we explored some other permutations and our finding suggested that there may be nice formulas in general. An interesting example is $s(\x_2^{(1)})$, where $n = 2$, $m = 4$, and $s=s_2s_3s_1s_2 = (13)(24)$. The following factor appears in the numerator of $s(\x_2^{(1)})$:
\begin{align*}
    & x_1^{(1)} x_1^{(2)} x_2^{(1)} x_2^{(2)} 
    + x_1^{(1)} x_1^{(2)} x_2^{(1)} x_3^{(2)} 
    + x_1^{(2)} x_2^{(1)} x_2^{(1)} x_3^{(2)} 
    + x_1^{(2)} x_2^{(1)} x_3^{(1)} x_3^{(2)} 
    + x_1^{(1)} x_1^{(2)} x_2^{(1)} x_4^{(2)} 
    + x_1^{(2)} x_2^{(1)} x_2^{(1)} x_4^{(2)} \\
    + \ & x_1^{(1)} x_1^{(2)} x_3^{(1)} x_4^{(2)} 
    + 2 x_1^{(2)} x_2^{(1)} x_3^{(1)} x_4^{(2)} 
    + x_2^{(1)} x_2^{(2)} x_3^{(1)} x_4^{(2)} 
    + x_1^{(2)} x_3^{(1)} x_3^{(1)} x_4^{(2)} 
    + x_2^{(2)} x_3^{(1)} x_3^{(1)} x_4^{(2)} 
    + x_1^{(2)} x_2^{(1)} x_4^{(1)} x_4^{(2)} \\
    + \ & x_1^{(2)} x_3^{(1)} x_4^{(1)} x_4^{(2)} 
    + x_2^{(2)} x_3^{(1)} x_4^{(1)} x_4^{(2)} 
    + x_3^{(1)} x_3^{(2)} x_4^{(1)} x_4^{(2)}.
\end{align*}

Unlike our $\Omega$ functions, some monomials in this factor have a coefficient of $2$ or contain squares, such as $(x_2^{(1)})^2$ and $(x_3^{(1)})^2$. It would be interesting to understand this factor as an example of a generalization of our $\Omega$ functions and interpret it in terms of cylindric networks.

\vspace{0.1in}
\noindent\textbf{Question 2.}  Is there a combinatorial proof of algebraic identities such as \lemref{thm:identity} and \lemref{lem:identity} using cylindric networks?
\vspace{0.1in}

Currently, our proofs of \lemref{thm:identity} and \lemref{lem:identity} rely on only elementary algebra. As we have combinatorially interpreted factors involved in these identities, it is natural to look for combinatorial proofs. One possibility would be that the two sides of a desired identity are two different ways of writing the sum of weights of a certain set of path families.

\vspace{0.1in}
\noindent\textbf{Question 3.}  Can cluster algebraic methods be used to prove our formulas?
\vspace{0.1in}

In~\cite{ILP2016}, Inoue, Lam, and Pylyavskyy define the \emph{cluster $R$-matrix}, a transformation obtained from a sequence of cluster mutations.  They then use a change of variables to obtain the birational $R$-matrix from the cluster $R$-matrix.  Motivated by these results, the connection between cluster algebras and the plabic $R$-matrix, a generalization of the birational $R$-matrix, was further studied by the first author in~\cite{C2020}.  These connections cluster algebras may be able to be exploited to find more elegant proofs of our formulas or to extend our results.

\section*{Acknowledgements}
This research was partially conducted at the 2020 University of Minnesota Twin Cities REU, which was supported by NSF RTG grant DMS-1745638. We thank Pavlo Pylyavskyy for suggesting this problem and Emily Tibor for her support and her feedback on this manuscript and various presentations. 

\clearpage
\bibliographystyle{alpha}
\bibliography{references.bib}

\end{document}